\setlist[enumerate,1]{label=\textup{(\arabic*)}}
\renewcommand*{\PrintDOI}[1]{\href{http://dx.doi.org/\detokenize{#1}}{doi: \detokenize{#1}}}
\newtheorem{thm}[equation]{Theorem}
\newtheorem{prop}[equation]{Proposition}
\newtheorem{lem}[equation]{Lemma}
\newtheorem{cor}[equation]{Corollary}
\theoremstyle{definition}
\newtheorem{defn}[equation]{Definition}
\theoremstyle{remark}
\newtheorem{remark}[equation]{Remark}
\newtheorem{rem}[equation]{Remark}
\newtheorem{example}[equation]{Example}
\newcommand{\thmref}[1]{Theorem~\ref{#1}}
\newcommand{\secref}[1]{Section~\ref{#1}}
\newcommand{\proref}[1]{Proposition~\ref{#1}}
\newcommand{\lemref}[1]{Lemma~\ref{#1}}
\newcommand{\corref}[1]{Corollary~\ref{#1}}
\numberwithin{equation}{section}
\newcommand{\C}{\mathbb{C}}
\newcommand{\N}{\mathbb{N}}
\newcommand{\Q}{\mathbb{Q}}
\newcommand{\Z}{\mathbb{Z}}
\newcommand{\dom}[1]{\operatorname{dom}(#1)}
\newcommand{\norm}[1]{\left\lVert#1\right\rVert}
\newcommand{\ssrc}[1]{\textbf{s}(#1)}
\newcommand{\ttrg}[1]{\textbf{t}(#1)}
\def\1{\mathbbm 1}
\newcommand{\Toepr}{\mathcal{T}_\lambda}
\def\nx{\N^\times}
\def\nxxz{\nx \ltimes \Z}
\def\inv{^{-1}}
\def\nxxzn{\nx \ltimes \Z/n\Z}
\def\nxxqz{\nx \ltimes \Q/\Z}
\newcommand{\mc}[1]{\mathcal{#1}}
\newcommand{\mbb}[1]{{\mathbf{#1}}}
\newcommand{\msc}[1]{\mathscr{#1}}
\author[C. J. Eagle]{Christopher J. Eagle${}^1$} 
\address[C. J. Eagle]
{University of Victoria, Department of Mathematics and Statistics, PO BOX 1700 STN CSC, Victoria, British Columbia, Canada, V8W 2Y2}%
\email{eaglec@uvic.ca}
\urladdr{http://www.math.uvic.ca/~eaglec}
\thanks{${}^1$ Supported by NSERC Discovery Grant RGPIN-2021-02459}
\author[G. Goerke]{Gavin Goerke}
\address[G. Goerke]{University of Victoria, Department of Mathematics and Statistics, PO BOX 1700 STN CSC, Victoria, British Columbia, Canada, V8W 2Y2}%
\email{gavingoerke@icloud.com}
\author[M. Laca]{Marcelo Laca${}^2$}
\address[M. Laca]{Department of Mathematics and Statistics, University of Victoria, Victoria, P.O Box 1700 STN CSC, BC V8W 2Y2, Canada}
\email{laca@uvic.ca}
\thanks{${}^2$ Supported by NSERC Discovery Grant RGPIN-2017-04052}
\date{\today}
\title[Relative topological principality and ideal intersection property]{Relative topological principality and the ideal intersection property for groupoid C*-algebras}
\begin{document}

\begin{abstract}
 We introduce the notion of relative topological principality for a family $\{H_\alpha\}$ of open subgroupoids of a Hausdorff \'etale groupoid $G$. The  C*-algebras $C^*_r(H_\alpha)$ of the groupoids $H_\alpha$ embed in  $ C^*_r(G)$ and we show that if $G$ is topologically principal relative to $\{H_\alpha\}$  then a representation of $C^*_r(G)$ is faithful if and only if its restriction to each of the subalgebras $C^*_r(H_\alpha)$ is faithful. This variant of the ideal intersection property potentially involves several subalgebras, and gives a new method of verifying injectivity of representations of reduced groupoid C*-algebras. As applications we prove a uniqueness theorem for Toeplitz C*-algebras of left cancellative small categories  that generalizes a recent result of Laca and  Sehnem for Toeplitz algebras of group-embeddable monoids, and we also discuss and compare concrete examples arising from integer arithmetic.
\end{abstract}

\maketitle

\section{Introduction}
One often wants a method for determining the injectivity of $*$-homorphisms with domain a fixed C*-algebra $A$.  In some situations there may be a C*-subalgebra $B$ of $A$ such that any $*$-homomorphism is injective on $A$ if and only if it is injective on $B$; this is particularly useful if the subalgebra $B$ is more easily understood than the whole algebra $A$.  Equivalently, we say that a C*-subalgebra $B$ of $A$  \textit{detects ideals} in $A$, or has \textit{the ideal intersection property in $A$}, if the only (closed 2-sided) ideal $J$ of $A$ such that $B \cap J = \{0\}$ is the zero ideal. If $B$ detects ideals in $A$, then we can verify injectivity of a $*$-homomorphism on $A$ by checking that it is injective on  $B$.  There are many cases known where certain types of C*-algebras have important subalgebras with the ideal intersection property, e.g. \cites{KawamuraTomiyama,ArchboldSpielberg,ExelLacaQuigg,exelNonHausdorffEtaleGroupoids2011,LS,brown2015cartan,Starling,kennedyIdealIntersectionProperty2021} (see  \secref{sec:IdealProperty} below). 

In this paper we consider the ideal intersection property for subalgebras and, more generally, families of subalgebras of $C^*_r(G)$, where $G$ is a 
Hausdorff \'etale groupoid.  We introduce the notion of a groupoid being \emph{relatively topologically principal} with respect to a family of open subgroupoids.  Our main result is that if $G$ is relatively topologically principal with respect to a family $\mathscr{H}$ of open subgroupoids then a representation of $C^*_r(G)$ is faithful if and only if it is faithful on $C^*_r(H)$ for each $H \in \mathscr{H}$.  Our notion of relative topological principality is a generalization of effectiveness or topological principality for \'etale groupoids, but our main result also captures several cases of the ideal intersection property that had been previously studied outside of the context of effective groupoids, including results from \cite{brown2015cartan}, \cite{LS}, and \cite{Starling}.

The paper is structured as follows.  \secref{sec:preliminaries} consists of preliminaries on inverse semigroups, \'etale groupoids, and the C*-algebras associated to these objects.   In \secref{sec:IdealProperty} we review several previous results about the ideal intersection property in order to set the context for the following section.  In \secref{sec:reltopprin} we give the definition of relative topological principality and prove our main result, \thmref{thm:groupoidUniquenessThm}.  We conclude by illustrating our result in two kinds of examples: In \secref{sec:smallcats} we consider the C*-algebras associated by Spielberg to left cancellative small categories \cite{SpielbergSmallCats}, and in \secref{sec:integerarith} we discuss concrete examples from integer arithmetic introduced in \cite{Laca-Schulz23}.

 \textbf{Acknowledgements:}  The results in this paper form part of the the second named author's MSc~Thesis \cite{GG-MSCthesis2023} written under the supervision of the first and third named authors. We would like to thank Jack Spielberg for several helpful comments on the thesis.  We would also like to thank Sergey Neshveyev and Bartosz Kwa\'{s}niewski for independently noticing that the second countability assumption made in the first posted version of this paper was not needed for our results, and in fact not used in our proofs; consequently this assumption has been removed. In addition, we also thank Kwa\'{s}niewski  for telling us how to use \cite{kwa-mey} to remove second countability from the second example at the end of \secref{sec:IdealProperty}, and Neshveyev for pointing out that the theory of isotropy fibers of ideals recently developed in \cite{Chr-Nes-Isotropy-2023} is particularly well-suited to prove results such as those obtained here, including our \thmref{thm:groupoidUniquenessThm}.
 
 {We would like to thank the anonymous referee for several helpful comments and suggestions.}
   
\section{Preliminaries}\label{sec:preliminaries}

By an \'etale groupoid $G$ we mean a topological groupoid with locally compact Hausdorff unit space, such that the source and target maps $\textbf{s}, \textbf{t}:G \rightrightarrows G^{(0)}$ are local homeomorphisms. A \textit{bisection} is a subset $A \subseteq G$ such that the source and target maps restrict to homeomorphisms $A \xrightarrow{\cong} \textbf{s}(A)$ and $A \xrightarrow{\cong} \textbf{t}(A)$ respectively. Recall that an \textit{inverse semigroup} is a semigroup $S$ such that for each $s \in S$ there is a unique element $s^{-1}$, the inverse of $s$, satisfying the identities $ss^{-1}s = s$ and $s^{-1}ss^{-1} = s^{-1}$. The collection of open bisections of an \'etale groupoid $G$ is denoted $\operatorname{Bis}(G)$ and forms an inverse semigroup under elementwise multiplication and inversion of subsets:
$$U\cdot V := \{\alpha\beta \; | \; \alpha \in U, \beta \in V, (\alpha,\beta) \in G^{(2)}\}, \; \; \; \; U^{-1} = \{\alpha^{-1} \; | \; \alpha \in U \}.$$

Given a unit $x \in G^{(0)}$ the \textit{isotropy} at $x$ is the group $G^x_x = \{\alpha \in G \; | \; \ssrc{\alpha} = x = \ttrg{\alpha} \}$. The \textit{isotropy} of $G$ is the subgroupoid $\operatorname{Iso}(G) := \bigcup_{x \in G^{(0)}}G^x_x$. We denote by $\operatorname{Iso}(G)^\circ$ its topological interior.

We use the standard groupoid notation for more general small categories as well. Accordingly, if $\mc{C}$ is a small category, we write $\mc{C}_x^y$ to denote the set of morphisms from $x$ to $y$. The set of objects of $\mc{C}$ is denoted $\mc{C}^{(0)}$ and is identified with the collection of identities in the set of arrows $\mc{C}^{(1)}$. We typically suppress the superscript on the set of arrows and just write $\mc{C} = \mc{C}^{(1)}$. If $a \in \mc{C}_y^z$ and $b \in \mc{C}_x^y$ we write the composition of $a$ and $b$ as $ab$.

\section{Ideal property and uniqueness theorems}\label{sec:IdealProperty}

An important early result in the study of the ideal intersection property appears in the work of Kawamura and Tomiyama \cite{KawamuraTomiyama}, where they show that when a discrete group $G$ acts on a space $X$, if the diagonal subalgebra $C(X)$ in the crossed product $C(X) \rtimes G$ has the ideal intersection property in $C(X) \rtimes G$ then the interior of the set of fixed points of $X$ for each nontrivial element of $G$ is empty; moreover, if $G$ is amenable then the converse holds. In \cite{ArchboldSpielberg} Archbold and Spielberg generalize this result to the setting of discrete C*-dynamical systems where $C(X)$ is replaced by a not necessarily commutative C*-algebra $A$. They introduced the notion of a \textit{topologically free} action, which is the appropriate notion of freeness in this setting, and showed that topological freeness of the action implies that any ideal $I \subseteq A \rtimes G$ that intersects trivially with $A$ must be contained in the kernel of the canonical quotient map $\lambda: A \rtimes G \rightarrow A \rtimes_r G$ from the full crossed product to the reduced one. In particular, when $G$ is amenable (so that $\lambda$ is injective), $A$ has the ideal intersection property in $A \rtimes G$. The result of Archbold and Spielberg indicates that if one wants to verify injectivity concretely, then one should look at the reduced crossed product so as not to be concerned with the ideal $\ker\lambda$. 

Assisted by this intuition, Exel, Laca, and Quigg \cite{ExelLacaQuigg} generalize the result of Kawamura and Tomiyama on a different axis than that of Archbold and Spielberg. They show that if a discrete group $G$ has a topologically free \textit{partial action} on a space $X$ then $C_0(X)$ has the ideal intersection property in the reduced product $C_0(X) \rtimes_r G$.  Under a second countability assumption this result has been generalized nicely to C*-algebras of Hausdorff \'etale groupoids in \cite{exelNonHausdorffEtaleGroupoids2011}. For general \'etale groupoids the concept of topological freeness splits into several non-equivalent notions; these agree if the groupoid is second-countable and Hausdorff. Perhaps the most commonly used condition in the  second-countable Hausdorff setting is the property of being \textit{topologically principal} or \textit{effective} which says that the interior of the isotropy is just the unit space.
 
 One thing all the above examples have in common is that they primarily study the ideal intersection property for an inclusion $B \subseteq A$ where $B$ is the unit fiber of a Fell bundle that satisfies some sort of topological freeness condition.  The first step away from the topologically free setting occurs in \cite[Theorem~3.1]{brown2015cartan}, where Brown, Nagy, Reznikoff, Sims, and Williams show that if $G$ is a second-countable Hausdorff \'etale groupoid, which they do not require to be topologically principal (i.e. one may have $G^{(0)} \subsetneq \operatorname{Iso}(G)^\circ$), then $C^*_r(\operatorname{Iso}(G)^\circ)$ has the ideal intersection property in $C^*_r(G)$.  This reduces to the previous case in the event $G$ is topologically principal. 
 
  The  theorem of Brown, Nagy, Reznikoff, Sims, and Williams  represents a conceptual step forward for studying ideals in $C^*_r(G)$ in the abstract, but using it in concrete situations is not necessarily easy. Moreover, there are situations where one has an open subgroupoid $H$ of an \'etale groupoid $G$ that does not contain $\operatorname{Iso}(G)^\circ$ yet  $C^*_r(H)$ still detects ideals in $C^*_r(G)$. 
  
  A very simple example of this is provided by the transformation groupoid associated to the action of a countable discrete group $G$ on itself by translation. The action is free so the interior of the isotropy is simply the unit space, hence the faithfulness criterion from \cite{ArchboldSpielberg,brown2015cartan} says that a representation of $C_0(G) \rtimes_{r} G$ is faithful if and only if it is faithful on $C_0(G)$. But this does not take into account that the action is  also minimal, and hence $C_0(G) \rtimes_{r} G \cong \mc{K}(\ell^2(G))$ is simple. With this in mind, we may simply take a single point, say, $e\in G$, for which the isotropy is $\{e\} \subset G$, and use instead the C*-algebra  of the singleton subgroupoid $\{e\} \ltimes \{e\}$, namely $\C \delta_e$, which is much smaller than $C_0(G)$ yet is equally effective at detecting ideals in $C_0(G) \rtimes_{r} G$ (admittedly because there is only one nontrivial ideal to detect in this setting). 
  
  Less obvious examples have been studied using several different approaches. For instance, in \cite{quasilat} it is first shown that the diagonal subalgebra has the ideal intersection property, but  then faithfulness is further reduced to the nonvanishing of a collection of projections \cite[Theorem 3.7]{quasilat}.  If the  quasi-lattice semigroup is finitely generated, cf. \cite[Section 6.3]{nica},
  then  a single projection, hence a $1$-dimensional subalgebra, does the job.   A second,  more recent example is the uniqueness theorem \cite[Theorem 5.1]{LS} valid for the reduced Toeplitz C*-algebra $\mathcal T_r(P)$ of a group embeddable monoid $P\subset G$. One can write $\mathcal T_r(P)$ as the C*-algebra of a partial transformation groupoid $G \ltimes \Omega$. The group $P^*$ of invertible elements of $P$ is a subgroup of $G$ and the transformation subgroupoid $P^* \ltimes \Omega$ embeds as a clopen subgroupoid of $G \ltimes \Omega$. According to \cite[Theorem~5.1]{LS},  $C^*_r(P^* \ltimes \Omega)$ detects ideals in $C^*_r(G \ltimes \Omega)$. In many cases $P^* \ltimes \Omega$ is  strictly smaller than $(\operatorname{Iso} G \ltimes \Omega)^\circ$.

  In hindsight, the results described above can be reformulated as situations in which $C^*_r(H)$ detects ideals in $C^*_r(G)$, where $H$ is an open subgroupoid of $G$.  In particular, $C^*_r(H)$ has the ideal intersection property in $C^*_r(G)$ in the following cases: 
\begin{itemize}
      \item{$G$ is the action groupoid of a discrete group acting (or partially acting) topologically freely on a locally compact Hausdorff space and $H$ is the unit space, see  \cite{ExelLacaQuigg,KawamuraTomiyama}.}
     \smallskip \item{$G$ is a topologically principal 
      Hausdorff \'etale groupoid and $H$ is the unit space, see \cite{exelNonHausdorffEtaleGroupoids2011} for  second countable $G$, and \cite[Theorem 7.29]{kwa-mey} for general $G$.} 
      
      \smallskip\item{$G$ is any second-countable Hausdorff \'etale groupoid and $H = \operatorname{Iso}{(G)^o}$,  \cite[Theorem 3.1]{brown2015cartan}.   { See \cite[Theorem 2.1]{Starling} for a generalization to more
general open subgroupoids of $\operatorname{Iso}{(G)^o}$, and \cite[Theorem 4.8]{Chr-Nes-Isotropy-2023} for a generalization that includes non-Hausdorff groupoids that admit a countable cover of open bisections.}}
     \smallskip \item{$G$ is the partial transformation groupoid associated to a group embeddable monoid $P$ and $H$ is the transformation groupoid obtained by restricting the action to the group of invertible elements of $P$, see \cite[Theorem 5.1]{LS}.}
  \end{itemize}
  
\section{Relative topological principality}\label{sec:reltopprin}
 With an eye on exploiting the orbit structure in the unit space of a groupoid in order to simplify ideal detection, we now introduce our main concept.
	
\begin{defn}\label{RelTPDef}
 	Let $G$ be an \'etale groupoid and let $\mathscr{H}$ be a family of open subgroupoids. We say that $G$ is \textit{topologically principal relative to} $\mathscr{H}$ if the set
 \begin{equation}\label{eqn:definitionX_H}
     X_\msc{H} := \{u \in G^{(0)}\! : \exists \Gamma \in \operatorname{Bis}(G) \text{ and } H \in \mathscr{H} \text{ s.t. } u \in \Gamma^{-1}\Gamma \text{ and } \Gamma G^u_u\Gamma^{-1} \subseteq H \}
 \end{equation} is dense in $G^{(0)}$. 
\end{defn}

 It will be convenient to have a reciprocal terminology; when $G$ is topologically principal relative to a family  $\mathscr{H}$ of open subgroupoids we say that $\mathscr{H}$ {\em constrains the isotropy} of $G$. If the family is a singleton $\mathscr{H} = \{H\}$, as will often be the case, we will simply say that $G$ is topologically principal relative to $H$, or that $H$ constrains the isotropy of $G$, and write $X_H := X_{\{H\}}$.

Before we state our main result, \thmref{thm:groupoidUniquenessThm} below, we give a few examples that illustrate the notion of relative topological principality, especially when framed within the various contexts discussed in \secref{sec:IdealProperty}.
\begin{example}
Let $G$ be a countable discrete group acting on itself  and consider the trivial subgroupoid $H =\{(e,e)\}$ of the transformation groupoid $G\ltimes G$.
Then $X_H = G$, hence  $G\ltimes G$ is topologically principal relative to $H$.
 \end{example}
 
 \begin{example}
 Let $G$ be an \'etale groupoid and let $H := G^{(0)}$ be the unit space, viewed as an open subgroupoid. 
If $\Gamma G^{u}_u \Gamma^{-1} \subseteq G^{(0)}$ for some open bisection $\Gamma$ then $\Gamma G^{u}_u \Gamma^{-1}$ is the trivial group, and so is $G^u_u$.  By \eqref{eqn:definitionX_H} 
 $u \in X_H$ if and only if  $ G^u_u = \{u\} $,  that is, $X_H$ is the set of units with trivial isotropy. As a consequence,  $G$ is topologically principal if and only if it is topologically principal relative to the unit space.
 \end{example}
 
 \begin{example}
 	Let $G$ be a second-countable Hausdorff \'etale groupoid. By \cite[Lemma~3.3]{brown2015cartan} the set $\{ u \in G^{(0)} \; | \; G^u_u \subseteq \operatorname{Iso}(G)^\circ \}$ is dense in $G^{(0)}$ hence $G$ is always topologically principal relative to $\operatorname{Iso}(G)^\circ$.
 \end{example}

  \begin{thm}\label{thm:groupoidUniquenessThm}
 Let $G$ be a  Hausdorff \'etale groupoid and suppose that $G$ is topologically principal relative to a family $\mathscr{H}$ of open subgroupoids . Then a representation $\rho$ of $C^*_r(G)$ is faithful if and only if its restriction to $C^*_r(H) \subseteq C^*_r(G)$ is faithful for each $H \in \mathscr{H}$.
 \end{thm}

To prove \thmref{thm:groupoidUniquenessThm} we will need to use isomorphisms of C*-algebras of subgroupoids that are inner in an appropriate sense.  It is natural that in a groupoid each element $\gamma \in G$ will act as a partial symmetry, but strictly speaking, one can only conjugate $\alpha \in G$ by $\gamma$ if $\ssrc{\gamma} = \ttrg{\alpha}$ and $\ssrc{\alpha} = \ttrg{\gamma^{-1}}$, in which case $\ssrc{\alpha} = \ttrg{\alpha}$, so $\alpha$ must be in the isotropy bundle of $G$. This will not be sufficient for our purposes so we broaden our understanding of what it means for $G$ to act on itself by conjugation by allowing an action of $\operatorname{Bis}(G)$ on the category Sub$(G)$ of subgroupoids of $G$. If $G$ is a group then $\operatorname{Bis}(G)$ may essentially be identified with $G$ and one may recover the inner automorphisms of $G$ from this action.
We formalize this notion in the following  definition.

\begin{defn}
By a \textit{partial automorphism} of an \'etale groupoid $G$ we mean an isomorphism $H \rightarrow K$ between two subgroupoids of $G$ in the category of  topological groupoids. We say that a partial automorphism $\phi$ is \textit{inner} if there is an open bisection $\Gamma \in \operatorname{Bis}(G)$ such that $\phi(h) = \Gamma h \Gamma^{-1}$ for all $h\in H$. Here $\Gamma h \Gamma^{-1} := (\textbf{s}|_\Gamma^{-1} \circ \textbf{t})(h) \cdot h \cdot (\textbf{t}|_{\Gamma^{-1}}^{-1} \circ \textbf{s})(h)$.
\end{defn}

   \begin{lem} 
 \label{ConjByGammaGroupoidIsoLem}
	Let $G$ be an \'etale groupoid, $\Gamma \in \operatorname{Bis}(G)$, and $H$ a subgroupoid of $G$ such that $H^{(0)} \subseteq \ssrc{\Gamma}$. Then the map $\operatorname{Ad}_\Gamma: H \rightarrow \Gamma H\Gamma^{-1} \text{ given by } \alpha \mapsto \Gamma \alpha \Gamma^{-1} := (\textbf{s}|_\Gamma^{-1} \circ \textbf{t})(\alpha) \cdot \alpha \cdot (\textbf{t}|_{\Gamma^{-1}}^{-1} \circ \textbf{s})(\alpha)$ is an inner partial automorphism of $G$.
 \end{lem}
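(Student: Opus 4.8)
The plan is to rewrite $\operatorname{Ad}_\Gamma$ in a symmetric two-sided form and then verify the axioms for an isomorphism of topological groupoids by direct computation. First I would note that for $\alpha \in H$ the hypothesis $H^{(0)} \subseteq \textbf{s}(\Gamma)$ guarantees $\textbf{t}(\alpha), \textbf{s}(\alpha) \in \textbf{s}(\Gamma)$, so the two outer factors in the definition are defined; moreover, since $\textbf{s}|_\Gamma$ is a bijection onto $\textbf{s}(\Gamma)$ there are unique $\gamma, \delta \in \Gamma$ with $\textbf{s}(\gamma) = \textbf{t}(\alpha)$ and $\textbf{s}(\delta) = \textbf{s}(\alpha)$, and one checks that $(\gamma, \alpha, \delta^{-1})$ is a composable triple with $\operatorname{Ad}_\Gamma(\alpha) = \gamma \alpha \delta^{-1}$. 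This form makes the endpoints transparent: $\textbf{t}(\operatorname{Ad}_\Gamma(\alpha)) = \textbf{t}(\gamma)$ and $\textbf{s}(\operatorname{Ad}_\Gamma(\alpha)) = \textbf{t}(\delta)$, both lying in $\textbf{t}(\Gamma) = \textbf{s}(\Gamma^{-1})$.

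Next I would prove multiplicativity. If $\alpha, \beta \in H$ are composable, then the element $\delta$ appearing on the right of $\operatorname{Ad}_\Gamma(\alpha)$ and the left-hand bisection element of $\operatorname{Ad}_\Gamma(\beta)$ are both the unique element of $\Gamma$ with source $\textbf{s}(\alpha) = \textbf{t}(\beta)$, hence coincide; the product $\delta^{-1}\delta$ collapses to a unit and $\operatorname{Ad}_\Gamma(\alpha)\operatorname{Ad}_\Gamma(\beta)$ telescopes to $\operatorname{Ad}_\Gamma(\alpha\beta)$. The same bookkeeping gives $\operatorname{Ad}_\Gamma(\alpha^{-1}) = \operatorname{Ad}_\Gamma(\alpha)^{-1}$ and, for a unit $u \in H^{(0)}$, $\operatorname{Ad}_\Gamma(u) = \textbf{t}(\textbf{s}|_\Gamma^{-1}(u)) \in G^{(0)}$. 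With these identities in hand I would check that $\Gamma H \Gamma^{-1} := \operatorname{Ad}_\Gamma(H)$ is a subgroupoid of $G$: it contains units and is closed under inversion by the above, while for products one uses that composability of $\operatorname{Ad}_\Gamma(\alpha)$ and $\operatorname{Ad}_\Gamma(\beta)$ in $G$ forces the two relevant elements of $\Gamma$ to have the same target, whence they coincide (as $\textbf{t}|_\Gamma$ is injective) and $\textbf{s}(\alpha) = \textbf{t}(\beta)$, so that $\alpha$ and $\beta$ are already composable in $H$ and multiplicativity applies.

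For bijectivity I would produce the inverse explicitly. Since the previous paragraph shows $(\Gamma H\Gamma^{-1})^{(0)} \subseteq \textbf{t}(\Gamma) = \textbf{s}(\Gamma^{-1})$, the map $\operatorname{Ad}_{\Gamma^{-1}}$ is applicable to $\Gamma H \Gamma^{-1}$, and applying it to $\gamma \alpha \delta^{-1}$ gives $\gamma^{-1}(\gamma \alpha \delta^{-1})\delta = \alpha$ once the units cancel; the symmetric computation shows $\operatorname{Ad}_\Gamma \circ \operatorname{Ad}_{\Gamma^{-1}} = \operatorname{id}$ on $\Gamma H \Gamma^{-1}$. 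Continuity of both $\operatorname{Ad}_\Gamma$ and $\operatorname{Ad}_{\Gamma^{-1}}$ follows since each is a composite of $\textbf{s}$, $\textbf{t}$, the homeomorphisms $\textbf{s}|_\Gamma^{-1}$ and $\textbf{t}|_{\Gamma^{-1}}^{-1}$, inversion, and the continuous multiplication on the space of composable triples. Hence $\operatorname{Ad}_\Gamma$ is a homeomorphism and an isomorphism of topological groupoids onto $\Gamma H \Gamma^{-1}$, and it is inner by construction.

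I expect that the work here is less a genuine obstacle than careful source/target bookkeeping: the whole argument rests on repeatedly using that a bisection meets each source-fibre and each target-fibre of $G$ in at most one point, which is precisely what lets one identify and cancel the bisection factors and conclude that the triple products are composable and telescope correctly.
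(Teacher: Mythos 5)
Your proposal is correct and follows essentially the same route as the paper: identify the unique elements of $\Gamma$ over the relevant source/target points, observe that the middle factors cancel so the triple products telescope, and note that $\operatorname{Ad}_{\Gamma^{-1}}$ supplies a continuous inverse. You include some additional bookkeeping (units, inverses, the subgroupoid check for $\Gamma H \Gamma^{-1}$) that the paper leaves implicit, but the core argument is the same.
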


 \begin{proof}
     Let $(\alpha, \beta) \in H^{(2)}$. Since $\Gamma$ is a bisection and $H^{(0)} \subseteq \ssrc{\Gamma}$ there are unique elements $\gamma_1, \gamma_2, \gamma_3 \in \Gamma$ such that 
     \[
     \ttrg{\alpha} = \ssrc{\gamma_1}, \quad    \ssrc{\alpha} = \ttrg{\gamma_2^{-1}} = \ssrc{\gamma_2} = \ttrg{\beta}, \quad \text{and } \ssrc{\beta} = \ttrg{\gamma_3^{-1}}.
     \]
     
 	Thus $\Gamma \alpha \Gamma^{-1} \Gamma \beta \Gamma^{-1} = \gamma_1 \alpha \gamma_2^{-1}\gamma_2 \beta \gamma_3^{-1} = \gamma_1 \alpha \beta \gamma_3^{-1} = \Gamma \alpha \beta \Gamma^{-1}$ so $\operatorname{Ad}_\Gamma$ is a homomorphism. Continuity follows from continuity of the source, range, and multiplication maps. Since $\operatorname{Ad}_{\Gamma^{-1}}$ is a continuous inverse, $\operatorname{Ad}_{\Gamma}$ is an isomorphism of topological groupoids. 
 \end{proof}

\begin{defn}
 Let $B$ and $C$ be C*-subalgebras of a C*-algebra $A$. We say a subalgebra isomorphism $\phi: B \rightarrow C$ of $A$ is {\em inner} if there is a partial isometry $v$ in the universal enveloping von Neumann algebra $A^{**}$ of $A$ such that $\phi(b) = vbv^*$ and $\phi^{-1}(c) = v^*cv$ for all $b \in B$ and $c \in C$. 
 \end{defn}
 
 \begin{remark}
 	We use the term ``subalgebra isomorphism''  to distinguish this notion from the more restrictive one of partial automorphism used in connection to partial actions of groups; specifically, partial automorphisms are subalgebra isomorphisms for which domain and range are 2-sided ideals. This feature is required only for the definition of a crossed product, and does not concern us here.
 \end{remark}
 
 It is a standard theme that inner automorphisms preserve more structure than outer ones. For our approach to detecting ideals, the most important consequence of innerness is the following.
  
 \begin{lem}\label{innerSubalgIsoLem}
 Let $B$ and $C$ be C*-subalgebras of a C*-algebra $A$. Suppose $\rho: A \rightarrow \mbb{B}(\mc{H})$ is a representation and $\phi: B \rightarrow C$ is an inner subalgebra isomorphism of $A$. Then $\rho$ is faithful on $B$ if and only if $\rho$ is faithful on $C$.
 \end{lem}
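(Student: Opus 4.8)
The plan is to exploit the fact that the implementing partial isometry $v$ lives in the bidual $A^{**}$ and that every representation of $A$ extends normally to $A^{**}$, so that $v$ can be realized as a concrete partial isometry on $\mc{H}$ intertwining $\rho|_B$ and $\rho|_C$. Once this is done, the equivalence of faithfulness on $B$ and on $C$ becomes a one-line algebraic manipulation, together with an appeal to symmetry.

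First I would recall the standard fact that $\rho$ extends to a normal (ultraweakly continuous) $*$-homomorphism $\bar\rho \colon A^{**} \to \mbb{B}(\mc{H})$ restricting to $\rho$ on $A$. Setting $w := \bar\rho(v)$, the partial-isometry relation $vv^*v = v$ passes through the $*$-homomorphism $\bar\rho$ to give $ww^*w = w$, so $w$ is again a partial isometry. Because $\bar\rho$ is multiplicative and $*$-preserving, extends $\rho$, and because the hypothesis guarantees $\phi(b) = vbv^* \in C \subseteq A$ and $\phi^{-1}(c) = v^*cv \in B \subseteq A$, I obtain the two intertwining identities
\begin{equation*}
 \rho(\phi(b)) = w\,\rho(b)\,w^* \quad (b \in B), \qquad \rho(\phi^{-1}(c)) = w^*\,\rho(c)\,w \quad (c \in C).
\end{equation*}

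With these in hand the equivalence is immediate. Suppose $\rho$ is faithful on $B$ and take $c \in C$ with $\rho(c) = 0$. Put $b := \phi^{-1}(c) \in B$; then $\rho(b) = w^*\rho(c)w = 0$, so $b = 0$ by faithfulness, whence $c = \phi(b) = 0$. Thus $\rho$ is faithful on $C$. The reverse implication follows by the symmetric argument: $\phi^{-1} \colon C \to B$ is itself an inner subalgebra isomorphism, implemented by the partial isometry $v^*$, so interchanging the roles of $B$ and $C$ shows that faithfulness on $C$ forces faithfulness on $B$.

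I do not anticipate a serious obstacle here, since all of the content lies in passing to the bidual. The one point requiring care is that $v$ need not belong to $A$ itself — this is precisely why the definition of innerness places $v$ in $A^{**}$, and why the normal extension $\bar\rho$ is the right tool, as it is the only reasonable way to make sense of ``$\rho(v)$'' when $v \notin A$. Everything else is formal manipulation of a $*$-homomorphism, relying only on the fact that $\phi$ and $\phi^{-1}$ take values back inside $A$, so that $\rho(\phi(b))$ and $\rho(\phi^{-1}(c))$ are genuinely defined.
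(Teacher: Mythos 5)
Your proof is correct and follows essentially the same route as the paper's: both pass to the unique weak-* continuous extension of $\rho$ to $A^{**}$ and use conjugation by (the image of) the implementing partial isometry, together with the identity $\phi^{-1}(\phi(b)) = v^*vbv^*v = b$ and symmetry for the converse. The only cosmetic difference is that you argue directly with kernels via the intertwining relation $\rho(\phi^{-1}(c)) = w^*\rho(c)w$, whereas the paper runs the equivalent norm computation $\norm{c} = \norm{\rho(b)} \leq \norm{\rho(c)}$.
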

 \begin{proof}
 	Suppose $\rho$ is faithful on $B$ and let $\tilde{\rho}$ denote the unique weak-* continuous extension to $A^{**}$. Let $c \in C$ be arbitrary and let $b \in B$ satisfy $c = vbv^* = \phi(b)$, where $v \in A^{**}$ is the partial isometry implementing $\phi$. Note that $\norm{b} = \norm{c}$ because $\phi$ is an isomorphism. Since $\rho$ is a $*$-homomorphism $\norm{\rho(c)} \leq \norm{c}$. As for the reverse inequality we have
 	\begin{multline*}
	\norm{c} = \norm{b} = \norm{\rho(b)} = \norm{\rho(v^*vbv^*v)} = \norm{\tilde{\rho}(v^*)\rho(vbv^*)\tilde{\rho}(v)} \\ \leq \norm{\rho(vbv^*)} = \norm{\rho(c)};
\end{multline*}

 	thus $\rho$ is faithful on $C$.
	The converse follows by symmetry because $\phi\inv :C \rightarrow B$ is an inner subalgebra isomorphism.
 \end{proof}

 Next we adapt Lemma~3.7 and  Proposition~3.8 from \cite{kennedyIdealIntersectionProperty2021} and show that the  the reduced C*-algebra of a groupoid $G$ has an action of $\operatorname{Bis(G)}$  by subalgebra isomorphisms of $C^*_r(G)$.  
 \begin{lem}\label{InducedPArtialIsoActionLem}
 	Suppose $G$ is a Hausdorff \'etale groupoid, and let $S$ denote the collection of partial isometries in the enveloping von Neumann algebra $C^*_r(G)^{**}$. Then there is a multiplicative and involutive map $v: \operatorname{Bis}(G) \rightarrow S$ mapping $\Gamma \mapsto v_\Gamma$ such that for every $\Gamma \in \operatorname{Bis}(G)$ and every open subgroupoid $H$ such that $H^{(0)} \subseteq \ssrc{\Gamma}$,
 	\begin{enumerate}
 		\smallskip\item the net $(f)_{0 \leq f \leq \mbb{1}_\Gamma} \subseteq C_c(\Gamma)$ converges to $v_\Gamma$ in the weak-* topology as $f$ increases to $\mbb{1}_\Gamma$.
   
 		\smallskip\item $v_{\Gamma}^*v_\Gamma = \mbb{1}_{\Gamma^{-1}\Gamma}$ and $v_\Gamma v_\Gamma^* = \mbb{1}_{\Gamma\Gamma^{-1}}$.
   
 		\smallskip\item If $g \in C_c(V)$ for some $V \in \operatorname{Bis}(H)$ then $v_\Gamma g = g(\Gamma^{-1} \cdot) \in C_c(\Gamma V)$, and $gv_{\Gamma}^* = g(\cdot \Gamma) \in C_c(V\Gamma^{-1})$.
   
 		\smallskip\item The isomorphism $\phi: C^*_r(H) \rightarrow C^*_r(\Gamma H \Gamma^{-1})$ induced by the groupoid isomorphism $H \rightarrow \Gamma H \Gamma^{-1}$ is implemented by the conjugation $\operatorname{Ad}_{v_\Gamma}: a \mapsto v_\Gamma a v_\Gamma^* $ and is therefore an inner subalgebra isomorphism.
 	\end{enumerate}
 \end{lem}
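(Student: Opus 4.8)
The plan is to build the single map $v\colon \operatorname{Bis}(G)\to S$ at once by realizing each $v_\Gamma$ as a genuine weak-$*$ limit inside $\pi_u(C^*_r(G))''$, where $\pi_u$ is the universal representation and the ultraweak topology on $\pi_u(C^*_r(G))''$ is identified with the weak-$*$ topology of $C^*_r(G)^{**}$. Throughout I would use three standard facts about Hausdorff \'etale groupoids: for $f$ supported on an open bisection the reduced norm coincides with $\norm{f}_\infty$; $C_0(G^{(0)})$ embeds in $C^*_r(G)$ as the range of the canonical conditional expectation; and for open $U\subseteq G^{(0)}$ the symbol $\mbb{1}_U\in C^*_r(G)^{**}$ denotes the associated open projection, i.e. the weak-$*$ limit of any net in $C_c(U)$ increasing pointwise to the indicator of $U$. (Only part~(4) also uses that $C^*_r(H)$ embeds in $C^*_r(G)$ because $H$ is open.)

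For the construction and part~(1), the set $\{f\in C_c(\Gamma): 0\le f\le \mbb{1}_\Gamma\}$ is directed upward by pointwise order (it is closed under $\max$) and consists of elements of norm $\le 1$. The crucial point is the pointwise inequality, valid for real $0\le f\le f'\le\mbb{1}_\Gamma$,
\[
(f'-f)^{*}*(f'-f)\ \le\ f'^{*}*f'-f^{*}*f \quad\text{in } C_0(G^{(0)}),
\]
which at the unit $\gamma^{-1}\gamma$, using unique factorization in the bisection, reads $(f'(\gamma)-f(\gamma))^2\le f'(\gamma)^2-f(\gamma)^2$. In any representation $\pi$ this gives
\[
\norm{\pi(f')\xi-\pi(f)\xi}^2\ \le\ \langle \pi(f'^{*}*f'-f^{*}*f)\xi,\xi\rangle ,
\]
and since $\langle\pi(f^{*}*f)\xi,\xi\rangle$ is a bounded increasing net of reals it is Cauchy; hence $\pi(f)\xi$ converges for every $\xi$, so $\pi(f)$ converges strongly to an operator of norm $\le 1$ in $\pi(C^*_r(G))''$. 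Applying this with $\pi=\pi_u$ and using that on bounded sets strong convergence implies ultraweak convergence, I define $v_\Gamma\in C^*_r(G)^{**}$ to be this limit, which is precisely assertion~(1).

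Part~(2) is then immediate: since $\pi_u(f)\xi\to v_\Gamma\xi$ in norm, polarization of $\langle v_\Gamma^*v_\Gamma\xi,\xi\rangle=\lim\langle\pi_u(f^{*}*f)\xi,\xi\rangle$ identifies $v_\Gamma^*v_\Gamma$ with the weak-$*$ limit of $\pi_u(f^{*}*f)$; but $f^{*}*f\in C_c(\Gamma^{-1}\Gamma)$ increases pointwise to the indicator of the open set $\Gamma^{-1}\Gamma$, so this limit is the open projection $\mbb{1}_{\Gamma^{-1}\Gamma}$, and symmetrically $v_\Gamma v_\Gamma^*=\mbb{1}_{\Gamma\Gamma^{-1}}$; in particular $v_\Gamma\in S$. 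Multiplicativity and involutivity of $\Gamma\mapsto v_\Gamma$ follow from the continuity properties of the von Neumann algebra operations: the adjoint is weak-$*$ continuous and $\{f^*\}$ is cofinal in the defining net for $\Gamma^{-1}$, giving $v_{\Gamma^{-1}}=v_\Gamma^*$; multiplication is jointly strongly continuous on bounded sets and the products $\{f_1*f_2\}$ are cofinal in the defining net for $\Gamma_1\Gamma_2$, giving $v_{\Gamma_1}v_{\Gamma_2}=v_{\Gamma_1\Gamma_2}$.

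For (3), right (resp.\ left) multiplication by the fixed $g\in C_c(V)$ is weak-$*$ continuous, so $v_\Gamma g$ is the weak-$*$ limit of $f*g$ and $gv_\Gamma^*$ the weak-$*$ limit of $g*f^*$; since $f*g$ is supported on the bisection $\Gamma V$ and $f\to 1$ uniformly on compacta by Dini's theorem, $f*g\to g(\Gamma^{-1}\cdot)$ in supremum, hence reduced, norm, identifying the limits as the stated functions in $C_c(\Gamma V)$ and $C_c(V\Gamma^{-1})$. For~(4) I would reduce to $g\in C_c(V)$ with $V\in\operatorname{Bis}(H)$, since such functions span a dense subspace of $C^*_r(H)$ and both $\operatorname{Ad}_{v_\Gamma}$ and $\phi$ are norm continuous. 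Two applications of~(3) give $v_\Gamma g v_\Gamma^*=\big(g(\Gamma^{-1}\cdot)\big)(\cdot\,\Gamma)$, and a direct unit-by-unit computation, using the explicit formula for $\operatorname{Ad}_\Gamma$ from \lemref{ConjByGammaGroupoidIsoLem}, shows this equals $g\circ\operatorname{Ad}_{\Gamma^{-1}}=\phi(g)$; the relation $v_\Gamma^*cv_\Gamma=\phi^{-1}(c)$ follows by symmetry, applying the computation to $\Gamma^{-1}$ and to the subgroupoid $\Gamma H\Gamma^{-1}$ (whose unit space lies in $\ttrg{\Gamma}=\ssrc{\Gamma^{-1}}$) together with $v_{\Gamma^{-1}}=v_\Gamma^*$, so $\phi$ is inner. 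I expect the main obstacle to be parts~(1)--(2): proving the net genuinely converges weak-$*$ rather than merely having cluster points — which is exactly what the Cauchy estimate buys — and correctly transporting strong and weak-operator convergence in $\pi_u$ to the weak-$*$ topology of $C^*_r(G)^{**}$ via $C^*_r(G)^{**}\cong\pi_u(C^*_r(G))''$; once $v_\Gamma$ is known to be a partial isometry, the identities in~(2)--(4) are routine consequences of the continuity of the algebra operations and the combinatorics of convolution on bisections.
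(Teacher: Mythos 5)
Your proposal is correct, and it follows the same overall skeleton as the paper's proof: define $v_\Gamma$ as the limit of the increasing net $(f)_{0\le f\le\mbb{1}_\Gamma}$, identify $v_\Gamma^*v_\Gamma$, $v_\Gamma g$ and the products $v_{\Gamma_1}v_{\Gamma_2}$ as limits of convolutions, and deduce (4) from (3) by density. The analytic core, however, is handled by a genuinely different device. The paper establishes the weak-$*$ convergence in part (1) by noting that $C_0(\Gamma)\cong\overline{C_c(\Gamma)}$ isometrically inside $C^*_r(G)$ (the reduced norm equals the sup norm on a bisection), so that $B^\infty(\Gamma)\hookrightarrow C_0(\Gamma)^{**}\hookrightarrow C^*_r(G)^{**}$, and then invokes the monotone convergence theorem against measures in $C_0(\Gamma)^*$; the limit is literally the bounded Borel function $\mbb{1}_\Gamma$, and parts (2)--(3) become pointwise computations once one observes that convolution against a fixed $g$ is continuous for pointwise convergence on $B^\infty(\Gamma)$. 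You instead prove that the net is strongly Cauchy in the universal representation via the inequality $(f'-f)^**(f'-f)\le (f')^**f'-f^**f$ in $C_0(G^{(0)})$ --- which is correct, since at a unit it reads $(f'(\gamma)-f(\gamma))^2\le (f'(\gamma)-f(\gamma))(f'(\gamma)+f(\gamma))$ --- and then identify the limits in (2)--(3) by cofinality of the families $\{f^**f\}$ and $\{f_1*f_2\}$ in the corresponding defining nets, together with a Dini/uniform-convergence argument giving $f*g\to g(\Gamma^{-1}\,\cdot\,)$ in norm. What the paper's route buys is a concrete realization of $v_\Gamma$ inside $B^\infty(\Gamma)$ that makes every subsequent identity transparent; what your route buys is convergence in the strong topology and independence from the measure-theoretic description of $C_0(\Gamma)^{**}$, at the cost of having to verify the cofinality claims you assert without proof (e.g.\ that for every $h\in C_c(\Gamma_1\Gamma_2)$ with $0\le h\le\mbb{1}_{\Gamma_1\Gamma_2}$ there exist $f_i\in C_c(\Gamma_i)$ with $f_1*f_2\ge h$, which follows from compactness of the two factor images of $\operatorname{supp}(h)$ under the unique-factorization maps). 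These are routine, so I regard your argument as complete in outline; your treatment of (4) matches the paper's.
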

 \begin{proof}
  Let $\Gamma \in \operatorname{Bis}(G)$. By the C*-identity there is an isometric isomorphism of Banach spaces $C_0(\Gamma) \xrightarrow{\cong} \overline{C_c(\Gamma)} \subseteq C^*_r(G)$ hence we have isometric embeddings $B^\infty(\Gamma) \hookrightarrow C_0(\Gamma)^{**} \hookrightarrow C^*_r(G)^{**}$ where $B^\infty(\Gamma)$ is the algebra of bounded Borel functions on $\Gamma$ and the embedding $B^\infty(\Gamma) \hookrightarrow C_0(\Gamma)^{**}$ is given by sending $f \in B^\infty(\Gamma)$ to the linear functional on $C_0(\Gamma)^* $ given by  $\mu \mapsto \int_\Gamma f d\mu$. Since the increasing net $(f)_{0 \leq f \leq \mbb{1}_{\Gamma}}$ in $C_c(\Gamma)$ converges pointwise to $\mbb{1}_\Gamma \in B^\infty(\Gamma)$,  the monotone convergence theorem implies 
  \[
  \Big(\int_\Gamma f d\mu\Big)_{0 \leq f \leq \mbb{1}_{\Gamma}} \longrightarrow \int_\Gamma \mbb{1}_{\Gamma} d\mu,
  \]
  which is to say $(f)_{0 \leq f \leq \mbb{1}_{\Gamma}} \rightarrow \mbb{1}_{\Gamma}$ in the weak-* topology on $B^\infty(\Gamma)$ and hence in $C^*_r(G)^{**}$ because $C_0(\Gamma)^{**}$ is weak-* closed in the latter. Setting $v_\Gamma := \mbb{1}_{\Gamma}$ gives part (1). Next let $g \in C_c(V)$ for an open bisection $V$ of $H$. Then for all $0 \leq f \leq \mbb{1}_{\Gamma}$ we have 
 \[ (f*g)(x) =\begin{cases} 
      f(y)g(y^{-1}x) & \text{if  $\exists  y \in \Gamma$  (necessarily unique) such that } y^{-1}x \in V, \\
      0 & \text{otherwise.} 
   \end{cases}
\] 
The convolution operator  $\psi_g: B^\infty(\Gamma) \rightarrow B^\infty(\Gamma V)$ defined by $\psi_g(f) =f*g $ is continuous with respect to the topology of pointwise convergence on $B^\infty(\Gamma)$ and $B^\infty(\Gamma V)$, hence 
 \[
 (v_\Gamma g)(x) = \lim_{0 \leq f \leq \mbb{1}_\Gamma} (f * g)(x) = g(\Gamma^{-1}x) 
 \]
and similarly
  \[
  (gv_{\Gamma^{-1}})(x) = \lim\limits_{0 \leq f \leq \mbb{1}_{\Gamma^{-1}}}(g * f)(x) = g(x\Gamma),
  \]
  proving part (3). 
 To see that the map $\operatorname{Bis}(\Gamma) \rightarrow S$,  $\Gamma \mapsto v_\Gamma$ is multiplicative and involutive, observe that 
  \[
  v_{\Gamma_1} v_{\Gamma_2} = \lim_{0 \leq f \leq \mbb{1}_{\Gamma_2}} v_{\Gamma_1} f = \lim_{0 \leq f \leq \mbb{1}_{\Gamma_2}}  f(\Gamma_1^{-1} \cdot ) = \lim_{0 \leq f \leq \mbb{1}_{\Gamma_1 \Gamma_2}}  f = v_{\Gamma_1 \Gamma_2} 
  \]
  and similarly $v_{\Gamma}^* = v_{\Gamma^{-1}}$. Moreover, this gives part (2) because
  
 $$v_\Gamma v_\Gamma^* = \lim_{0 \leq f \leq \mbb{1}_{\Gamma^{-1}}} v_\Gamma f = \lim_{0 \leq f \leq \mbb{1}_{\Gamma^{-1}}} f(\Gamma^{-1}\cdot ) = \mbb{1}_{\Gamma \Gamma^{-1}}$$
 and similarly $v_\Gamma^* v_\Gamma = \mbb{1}_{\Gamma^{-1}\Gamma}$.
 
 Finally we prove part (4). By (3) we have $v_\Gamma g v_\Gamma^* = g(\Gamma^{-1} \cdot \Gamma) \in C_c(\Gamma H \Gamma^{-1})$ for each $g \in C_c(H)$. By hypothesis we have $\Gamma^{-1}\Gamma H \Gamma^{-1}\Gamma = H$ and thus $\operatorname{Ad_{v_{\Gamma}^*}}:C_c(\Gamma H \Gamma^{-1}) \rightarrow C_c(H)$ is the inverse of $\operatorname{Ad}_{v_\Gamma}$. Since $\norm{a} = \norm{v_\Gamma^* v_\Gamma a v_\Gamma^* v_\Gamma} \leq \norm{v_\Gamma a v_\Gamma^{*}} \leq \norm{a}$, the result follows.
 \end{proof}
\begin{rem}
We point out that \lemref{InducedPArtialIsoActionLem} is about the reduced groupoid C*-algebra $C^*_r(G)$ instead of the universal one considered in \cite[Lemma~3.7.]{kennedyIdealIntersectionProperty2021} and  that we do not need to assume the unit space $G^{(0)}$ to be compact because we do not rely upon the notion of $G$-C*-algebra used in \cite{kennedyIdealIntersectionProperty2021}. One should also notice that the set of partial isometries in a von Neumann algebra does not typically form an inverse semigroup under multiplication as claimed in \cite[Lemma~3.7.]{kennedyIdealIntersectionProperty2021}. This does not affect the results because all that is required is that  the image of the map be an inverse semigroup.
 \end{rem}
 
The following two lemmas generalize results from \cite{brown2015cartan} to a collection of subalgebras and subgroupoids; we are indebted to Kwa\'{s}niewski
for pointing out that second countability is not needed for part (b) of \cite[Lemma 3.3]{brown2015cartan}.
 \begin{lem}[cf. \cite{brown2015cartan} Theorem 3.2]\label{StateExtensionFaithfullnessLem}
 	Let $\msc{B}$ be a collection of C*-subalgebras of a C*-algebra $A$. For each $B \in \msc{B}$ suppose that $S_B$ is a collection of states on $B$ such that each state $\varphi \in S_B$ has a unique extension  to a state $\tilde{\varphi}$ on $A$. Let $S := \bigcup_{B \in \msc{B}} S_B$ and suppose that the sum of GNS representations $\bigoplus_{\varphi \in S} \pi_{\tilde{\varphi}}$ of the extensions of the states in $S$ is faithful on $A$. Then a representation $\rho: A \rightarrow \mbb{B}(\mc{H})$ is faithful if and only if it is faithful on $B$ for each $B\in \msc{B}$.
 \end{lem}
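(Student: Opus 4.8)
The plan is to prove the nontrivial implication — that faithfulness of $\rho$ on each $B \in \mathscr{B}$ forces faithfulness on all of $A$ — by showing that the (by hypothesis faithful) representation $\bigoplus_{\varphi \in S} \pi_{\tilde{\varphi}}$ is weakly contained in $\rho$, in the sense that $\norm{\pi_{\tilde{\varphi}}(a)} \leq \norm{\rho(a)}$ for every $a \in A$ and every $\varphi \in S$. The converse implication is immediate, since the restriction of a faithful, hence isometric, representation to any C*-subalgebra remains isometric. Granting the weak containment, taking a supremum over $\varphi \in S$ gives $\norm{\bigoplus_{\varphi \in S}\pi_{\tilde{\varphi}}(a)} \leq \norm{\rho(a)} \leq \norm{a}$, and as the left-hand side equals $\norm{a}$ by the faithfulness assumption on the direct sum, $\rho$ is isometric and therefore faithful.

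The key step, which I would first establish for a single state, is that each extension $\tilde{\varphi}$ \emph{factors through} $\rho$. Fix $B \in \mathscr{B}$ and $\varphi \in S_B$. Because $\rho$ is faithful on $B$, it restricts to an isometric $*$-isomorphism $\rho|_B \colon B \xrightarrow{\cong} \rho(B)$, so $\varphi' := \varphi \circ (\rho|_B)^{-1}$ is a well-defined state on the C*-subalgebra $\rho(B) \subseteq \rho(A)$. Extending $\varphi'$ to a state $\psi$ on $\rho(A)$ by the Hahn--Banach theorem for states, the composite $\psi \circ \rho$ is a state on $A$ whose restriction to $B$ is $\varphi$. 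Here is where the unique-extension hypothesis does the real work: uniqueness forces $\psi \circ \rho = \tilde{\varphi}$. With this factorization in hand I would invoke uniqueness of the GNS construction to identify $\pi_{\tilde{\varphi}}$ with $\pi_\psi \circ \rho$, where $\pi_\psi$ is the GNS representation of $\psi$ on $\rho(A)$; the cyclic vector $\xi_\psi$ is cyclic for $\pi_\psi \circ \rho$ and reproduces $\tilde{\varphi}$. Since $\pi_\psi$ is a $*$-homomorphism and hence norm-decreasing, this yields $\norm{\pi_{\tilde{\varphi}}(a)} = \norm{\pi_\psi(\rho(a))} \leq \norm{\rho(a)}$ for all $a \in A$, which is exactly the weak containment required above.

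The main obstacle is conceptual rather than computational: recognizing that the unique-extension property is precisely the ingredient that upgrades an \emph{ad hoc} extension $\psi \circ \rho$ of $\varphi$ to an identification with the specific state $\tilde{\varphi}$, thereby factoring $\tilde{\varphi}$ through $\rho$. Everything else — the state-extension theorem, uniqueness of GNS, and the passage from a single subalgebra to the whole family by taking direct sums — is routine. This argument is the natural generalization of \cite[Theorem~3.2]{brown2015cartan} from one distinguished subalgebra to an arbitrary family $\mathscr{B}$.
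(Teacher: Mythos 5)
Your argument is correct. The forward direction is indeed immediate, and the key step --- pushing $\varphi$ forward to a state $\varphi'$ on $\rho(B)$, extending it to a state $\psi$ on $\rho(A)$, and using the unique-extension hypothesis to conclude $\psi\circ\rho = \tilde\varphi$ --- is sound: $\psi\circ\rho$ is a genuine state on $A$ (since $\rho$ carries an approximate unit of $A$ to one of $\rho(A)$), it restricts to $\varphi$ on $B$, and GNS uniqueness then gives $\pi_{\tilde\varphi}\cong\pi_\psi\circ\rho$, hence $\norm{\pi_{\tilde\varphi}(a)}\le\norm{\rho(a)}$; combined with the assumed faithfulness (hence isometry) of $\bigoplus_{\varphi\in S}\pi_{\tilde\varphi}$ this forces $\rho$ to be isometric. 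The paper takes a related but mechanically different route: it sets $J=\ker\rho$, observes $J\cap B=\{0\}$, and works inside the C*-subalgebra $A_0=J+B$ of $A$, whose quotient by $J$ is $B$ via a map $\gamma$; uniqueness of the extension forces $\tilde\varphi|_{A_0}=\varphi\circ\gamma$, and then a direct computation with the GNS vector shows $\pi_{\tilde\varphi}(x^*x)=0$ for every $x\in J$, so faithfulness of the direct sum gives $J=0$. Your version factors $\tilde\varphi$ through $\rho$ \emph{globally} and phrases the conclusion as weak containment of each $\pi_{\tilde\varphi}$ in $\rho$, which is arguably cleaner and avoids the fact (cited in the paper from Dixmier) that $J+B$ is a C*-subalgebra with $
(J+B)/J\cong B$; the paper's version stays entirely inside $A$ and only needs to control $\tilde\varphi$ on $J+B$ rather than on all of $A$. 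Both hinge on exactly the same two ingredients --- the Hahn--Banach state extension theorem and the uniqueness hypothesis --- so the difference is one of packaging rather than substance.
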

 \begin{proof}
 	 Suppose that the restriction of $\rho: A \rightarrow \mbb{B}(\mc{H})$ to $B$ is faithful for each $B \in \msc{B}$. Set $J := \operatorname{Ker} \rho$, let $B \in \msc{B}$ be arbitrary, and observe that $J \cap B = \{0\}$. By e.g. \cite[Corollary~1.8.4.]{dixmierAlgebras1982}, $A_0 := J + B$ is a C*-subalgebra of $A$ with ideal $J$ and quotient $A_0/J \cong B$. Denote the quotient map by $\gamma: A_0 \rightarrow B$. Let $\varphi \in S_B$ be arbitrary. Then $\tilde{\varphi}(a) = \varphi(\gamma(a))$ for all $a \in A_0$, for otherwise any extension of $\varphi\circ \gamma$ to $A$ would extend $\varphi$ and be different from $\tilde \varphi$. 
   
    Let $x \in J$ and denote by $\xi_{\tilde{\varphi}}$ the GNS vector of $\tilde\varphi$. For every $a \in A$,
    \begin{multline*}
    \langle \pi_{\tilde{\varphi}}(x^*x) \pi_{\tilde{\varphi}}(a) \xi_{\tilde{\varphi}},
    \pi_{\tilde{\varphi}}(a) \xi_{\tilde{\varphi}} \rangle =
    \langle \pi_{\tilde{\varphi}}(a^*x^*xa)  \xi_{\tilde{\varphi}},
    \xi_{\tilde{\varphi}} \rangle \\ =
    \tilde{\varphi}(a^*x^*xa)  =
    \varphi(\gamma(a^*x^*xa)) =
    0
    \end{multline*}
      because $a^*x^*xa \in J$. Therefore $\pi_{\tilde{\varphi}}(x^*x)= 0$, and since $\bigoplus_{\varphi \in S} \pi_{\tilde{\varphi}}$ is faithful, and $B \in \msc{B}$ and $\varphi \in S_B$ are arbitrary we must have $x =0$, proving that $\rho$ is faithful.
 \end{proof}
 
 \begin{lem}[cf. \cite{brown2015cartan} Lemma 3.3]\label{bfbLem}
 Let $G$ be a Hausdorff \'etale groupoid that is topologically principal relative to a family $\msc{H}$ of open subgroupoids. Let $X_{\msc H}$ be the set defined in \eqref{eqn:definitionX_H} and suppose that $u\in X_\msc{H}$ and $\Gamma \in \operatorname{Bis}(G)$ satisfy $u \in \Gamma^{-1}\Gamma$ and $\Gamma G^u_u \Gamma^{-1} \subseteq H$ for some $H \in \msc{H}$. Then for each $f \in C_c(G)$ there exists $b \in C_c((\Gamma^{-1}H\Gamma)^{(0)})^{+}$ such that $\lVert b \rVert = b(u) = 1$ and $bfb\in C_c(\Gamma^{-1}H\Gamma)$.
 \end{lem}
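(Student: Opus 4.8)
We first record how the set $\Gamma^{-1}H\Gamma$ sits over $u$. Since $u \in \Gamma^{-1}\Gamma = \ssrc{\Gamma}$, there is a unique $\gamma_u \in \Gamma$ with $\ssrc{\gamma_u} = u$, and for any $\delta \in G^u_u$ one computes $\Gamma\delta\Gamma^{-1} = \gamma_u\delta\gamma_u^{-1} \in \Gamma G^u_u\Gamma^{-1} \subseteq H$; hence $\delta = \gamma_u^{-1}(\gamma_u\delta\gamma_u^{-1})\gamma_u \in \Gamma^{-1}H\Gamma$. Thus $G^u_u \subseteq \Gamma^{-1}H\Gamma$, and in particular $u \in (\Gamma^{-1}H\Gamma)^{(0)}$, which is an open subset of $G^{(0)}$ because $H$ is open and left/right multiplication by the open bisections $\Gamma,\Gamma^{-1}$ is an open map in an \'etale groupoid.

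Next, for $b \in C_c(G^{(0)})$ the convolution product is $(bfb)(\gamma) = b(\ttrg{\gamma})\,f(\gamma)\,b(\ssrc{\gamma})$, so that $\operatorname{supp}(bfb) \subseteq \{\gamma \in \operatorname{supp} f : \ssrc{\gamma},\ttrg{\gamma} \in \operatorname{supp} b\}$. Consequently it suffices to produce an open neighbourhood $W$ of $u$ with $W \subseteq (\Gamma^{-1}H\Gamma)^{(0)}$ and
\[
 \{\gamma \in \operatorname{supp} f : \ssrc{\gamma} \in W \text{ and } \ttrg{\gamma} \in W\} \subseteq \Gamma^{-1}H\Gamma,
\]
for then any $b \in C_c(W)^{+}$ with $b(u) = \norm{b} = 1$, supplied by Urysohn's lemma, satisfies all the requirements (the normalization $\norm{b}=b(u)=1$ uses that on functions supported in the unit space the reduced C*-norm agrees with the supremum norm).

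The construction of $W$ is the crux. Write $K := \operatorname{supp} f$, a compact set, and recall $K \cap G^u_u \subseteq \Gamma^{-1}H\Gamma$ by the first paragraph. I claim there is an open neighbourhood $W$ of $u$ in $G^{(0)}$ such that every $\gamma \in K$ with $\ssrc{\gamma},\ttrg{\gamma} \in W$ already lies in the open set $\Gamma^{-1}H\Gamma$. Suppose not; indexing by the neighbourhood filter of $u$ directed under reverse inclusion, one obtains a net $(\gamma_W)$ in $K \setminus \Gamma^{-1}H\Gamma$ with $\ssrc{\gamma_W},\ttrg{\gamma_W} \in W$, so $\ssrc{\gamma_W} \to u$ and $\ttrg{\gamma_W} \to u$. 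By compactness of $K$ a subnet converges to some $\gamma \in K$, and continuity of the source and target maps forces $\ssrc{\gamma} = \ttrg{\gamma} = u$, i.e. $\gamma \in K \cap G^u_u \subseteq \Gamma^{-1}H\Gamma$. But $K \setminus \Gamma^{-1}H\Gamma$ is closed, so the limit lies in $K \setminus \Gamma^{-1}H\Gamma$, a contradiction. Shrinking the resulting $W$ to $W \cap (\Gamma^{-1}H\Gamma)^{(0)}$ completes the construction.

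The main obstacle is precisely this separation step: one must rule out a net of arrows in $\operatorname{supp} f$ whose source and target both approach $u$ while the arrows themselves remain outside $\Gamma^{-1}H\Gamma$. This is exactly where compactness of $\operatorname{supp} f$ together with the \'etale structure enter, guaranteeing that any limiting arrow falls in the isotropy group $G^u_u$, which the hypothesis $\Gamma G^u_u\Gamma^{-1}\subseteq H$ has placed inside $\Gamma^{-1}H\Gamma$. Note that the density of $X_{\msc H}$ is not needed here — only the pointwise condition attached to the given $u,\Gamma,H$ — as density is what drives the subsequent application in \thmref{groupoidUniquenessThm}. The remaining verifications — that $bfb$ is continuous with compact support and hence a genuine element of $C_c(\Gamma^{-1}H\Gamma)$ — are routine.
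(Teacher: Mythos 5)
Your proof is correct, but it takes a genuinely different route from the paper's. The paper decomposes $f$ as a finite sum $\sum_{D\in F} f_D$ of functions supported on precompact open bisections and then runs a three-way case analysis on each $D$ (according to whether $D$ meets $G^u_u$, meets $\textbf{s}^{-1}(u)\cup\textbf{t}^{-1}(u)$ but not $G^u_u$, or misses $u$ entirely), producing for each $D$ a neighbourhood $V_D$ of $u$ with $V_D D V_D \subseteq \Gamma^{-1}H\Gamma$ or $f_D|_{V_D D V_D}=0$, and finally intersects the $V_D$. You instead work with the whole compact set $K=\operatorname{supp} f$ at once and obtain the cut-down neighbourhood $W$ by a compactness/net argument: any net in $K\setminus\Gamma^{-1}H\Gamma$ whose sources and targets converge to $u$ would have a limit point in $K\cap G^u_u\subseteq\Gamma^{-1}H\Gamma$, contradicting closedness of $K\setminus\Gamma^{-1}H\Gamma$. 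Both arguments rest on the same two facts — $\Gamma^{-1}H\Gamma$ is open and contains $G^u_u$ — and both use Hausdorffness; your observation that the density of $X_{\msc H}$ plays no role in this lemma is also correct. What the paper's version buys is an explicit, constructive neighbourhood and a closer parallel with \cite{brown2015cartan}*{Lemma 3.3}, which is why it is phrased that way; what yours buys is brevity and the elimination of the bisection bookkeeping, at the cost of a non-constructive contradiction argument. One small point worth making explicit in your write-up: to conclude $bfb\in C_c(\Gamma^{-1}H\Gamma)$ you should note that $\{\gamma\in\operatorname{supp} f:\ \ssrc{\gamma},\ttrg{\gamma}\in\operatorname{supp} b\}$ is a \emph{closed} subset of the compact set $\operatorname{supp} f$ contained in $\Gamma^{-1}H\Gamma$, so the support of $bfb$ is genuinely a compact subset of the open set $\Gamma^{-1}H\Gamma$ and not merely its closure in $G$; with $\operatorname{supp} b\subseteq W$ this is immediate from your construction.
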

 \begin{proof}
 	 Let $f \in C_c(G)$ and write $f = \sum_{D \in F} f_D$ where $F$ is a finite collection of precompact open bisections and $\operatorname{supp}(f_D)\subseteq D$. For each $D \in F$ choose an open neighborhood $V_D$ of $u$ in $G^{(0)}$ as follows.
 	\begin{itemize}
 		\item If there exists $\alpha \in D\cap G^u_u$ then $D\cap \Gamma^{-1}H\Gamma$ is an open neighborhood of $\alpha$, hence $V_D := \ssrc{D\cap \Gamma^{-1}H\Gamma} \cap \ttrg{D\cap \Gamma^{-1}H\Gamma}$ is an open neighborhood of $u$ and $V_D D V_D \subseteq D \cap \Gamma^{-1}H \Gamma$.
 		
 		\item If $D\cap G^u_u=\emptyset$ but there exists $\alpha \in D$ with $r(\alpha) = u \neq s(\alpha)$ or $r(\alpha) \neq u = s(\alpha)$; then take an open neighborhood $ U \subseteq D$ of $\alpha$ such that $s(U) \cap r(U) = \emptyset$ and set $V_D := r(U)$ so that $V_D D V_D = \emptyset$. 
 		\item If $u \not\in r(D) \cup s(D)$ then take an open neighborhood $V_D$ of $u$ such that $f_D|_{V_D D V_D} = 0$.
 	\end{itemize}
 	Let $V:= \cap_{D\in F} V_D$. Then $V$ is open and contains $u$. Choose $b\in C_c(V)^+$ such that $b(u) = \norm{b} = 1$. By construction $\operatorname{supp}(bfb) \subseteq \Gamma^{-1}H\Gamma$ as desired.
 \end{proof}

 \begin{lem} [cf. \cite{brown2015cartan} Lemma 3.5]\label{CompressibleModGammaHGammaLem}
 	 
    Under the same assumptions as in \lemref{bfbLem}, let $a \in C^*_r(G)$ and $\epsilon >0$. Then there exists $c \in C^*_r(\Gamma^{-1} H \Gamma)$ and $b \in C_c(G^{(0)})^+$ such that $\norm{b} = 1$ and $\varphi(b) = 1$ for all states $\varphi$ that factor through $C_r^*(G_u^u)$ and such that $\norm{bab-c}<\epsilon$.
 \end{lem}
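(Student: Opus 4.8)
The plan is to reduce the statement to \lemref{bfbLem} by a single density approximation, so that the compression $bab$ differs from an explicit element of $C_c(\Gamma^{-1}H\Gamma)$ by something controlled. First I would use the density of $C_c(G)$ in $C^*_r(G)$ to choose $f \in C_c(G)$ with $\norm{a-f} < \epsilon$; this is the only step that replaces the abstract $a$ by a compactly supported function and is what makes \lemref{bfbLem} applicable. Applying that lemma to this particular $f$ (the standing hypotheses on $u$ and $\Gamma$ being exactly those of \lemref{bfbLem}) produces $b \in C_c((\Gamma^{-1}H\Gamma)^{(0)})^+$ with $\norm{b} = b(u) = 1$ and $bfb \in C_c(\Gamma^{-1}H\Gamma)$. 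Since $(\Gamma^{-1}H\Gamma)^{(0)}$ is an open subset of $G^{(0)}$, I would extend $b$ by zero to view it as an element of $C_c(G^{(0)})^+$ without changing its supremum or its value at $u$; as $b$ lies in the diagonal $C_0(G^{(0)}) \subseteq C^*_r(G)$, its norm in $C^*_r(G)$ equals $\sup|b| = 1$. Setting $c := bfb \in C_c(\Gamma^{-1}H\Gamma) \subseteq C^*_r(\Gamma^{-1}H\Gamma)$ then supplies the required element of $C^*_r(\Gamma^{-1}H\Gamma)$.

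The two remaining claims are now essentially immediate. For the norm estimate, using $\norm{b}=1$ I would write
\[
\norm{bab - c} = \norm{b(a-f)b} \le \norm{b}^2\,\norm{a-f} = \norm{a-f} < \epsilon .
\]
For the state condition, I would invoke the defining feature of a state $\varphi$ factoring through $C^*_r(G^u_u)$: on the diagonal $C_0(G^{(0)})$ such a state agrees with evaluation at $u$, because $G^{(0)} \cap G^u_u = \{u\}$, so the image of $b$ in $C^*_r(G^u_u)$ is $b(u)\mbb 1$. Hence $\varphi(b) = b(u) = 1$ for every such $\varphi$.

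The only point that requires any care is the interface with \lemref{bfbLem}: one must apply that lemma to the approximant $f$ rather than to $a$, and then reuse the resulting $b$ to compress $a$ itself, so that the error term $b(a-f)b$ is governed purely by $\norm{a-f}$ while the diagonal cutdown $c = bfb$ lands exactly inside $C_c(\Gamma^{-1}H\Gamma)$. All of the genuine difficulty — constructing $b$ so that $bfb$ is supported in $\Gamma^{-1}H\Gamma$ — has already been dispatched in \lemref{bfbLem}, so the present lemma is a packaging of that result with the elementary compression estimate and the diagonal evaluation identity.
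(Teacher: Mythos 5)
Your proposal is correct and follows the paper's own proof essentially verbatim: approximate $a$ by $f\in C_c(G)$, apply Lemma~\ref{bfbLem} to $f$ to obtain $b$, set $c=bfb$, and observe that $b$ maps to the identity $\delta_u$ of $C^*_r(G^u_u)$ under the restriction map so that any state factoring through $C^*_r(G^u_u)$ sends $b$ to $1$. The only cosmetic difference is that the paper makes the last step explicit via the completely positive contraction $\vartheta_u$ of Christensen--Neshveyev, whereas you phrase it as the diagonal evaluation identity; the content is the same.
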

\begin{proof}
	Let $\epsilon>0$ and $a\in C^*_r(G)$. Let $f \in C_c(G)$ such that $\norm{a - f} < \epsilon$. Apply \lemref{bfbLem} to find $b \in C_c(G^{(0)})^+$ such that $\norm{b} = b(u)=1$ and $bfb \in C_c(\Gamma^{-1} H \Gamma)$. Let $\vartheta_u: C^*_r(\Gamma^{-1} H \Gamma) \rightarrow C^*_r(G^u_u)$ be the completely positive contraction that extends the restriction map $C_c(\Gamma^{-1} H \Gamma) \rightarrow C_c(G^u_u), \ f \mapsto f|_{G^u_u}$ by \cite[Lemma~1.2]{christensenNonExoticCompletions2022}. Then $\vartheta_u(b) = \delta_u$ is the identity of $C^*_r(G^u_u)$ hence if $\varphi$ is a state on $C^*_r(\Gamma^{-1} H \Gamma)$ such that $\varphi = \psi \circ \vartheta_u$ for some state $\psi$ on $C^*_r(G^u_u)$ we have $\varphi(b) = \psi(\delta_u) = 1$. Moreover if we set $c = bfb$ we have $$\norm{bab- c} = \norm{bab- bfb} < \epsilon$$ as desired.
\end{proof}

Before we prove our main theorem we need one final lemma.

\begin{lem}\label{RestrictionsCommuteLemma}
Suppose that $G$ is a Hausdorff \'etale groupoid and let $x \in G^{(0)}$. Let 
\begin{itemize}
    \item $\Phi: C^*_r(G) \rightarrow C_0(G^{(0)})$ be the canonical faithful conditional expectation, 
     \item $\vartheta_x: C^*_r(G) \rightarrow C^*_r(G^x_x)$ be the completely positive contraction extending the restriction map $C_c(G) \rightarrow C_c(G^{x}_x)$ as in \cite[Lemma~1.2.]{christensenNonExoticCompletions2022}, 
      \item $\tau: C^*_r(G^x_x) \rightarrow \C$ be the canonical faithful trace given by $\tau(a) = \langle \delta_x, a \delta_x\rangle$ and  \item $\operatorname{ev}_x: C_0(G^{(0)}) \rightarrow \C$ be evaluation at $x$.
\end{itemize}
    Then the following diagram commutes.
\[\begin{tikzcd}
	{C^*_r(G)} & {C_0(G^{(0)})} \\
	{C^*_r(G^x_x)} & {\mathbb{C}}
	\arrow["\tau"', from=2-1, to=2-2]
	\arrow["{\vartheta_x}"', from=1-1, to=2-1]
	\arrow["\Phi", from=1-1, to=1-2]
	\arrow["{\operatorname{ev}_x}", from=1-2, to=2-2]
\end{tikzcd}\]
\end{lem}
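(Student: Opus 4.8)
The plan is to exploit that all four arrows in the diagram are bounded linear maps (indeed completely positive contractions), so both composites $\operatorname{ev}_x \circ \Phi$ and $\tau \circ \vartheta_x$ are continuous states on $C^*_r(G)$. Since $C_c(G)$ is dense in $C^*_r(G)$, it then suffices to verify that the two composites agree on an arbitrary $f \in C_c(G)$, and for such an $f$ every map in sight is given by an explicit formula. So I would reduce to a direct computation on $C_c(G)$ and then invoke continuity and density to conclude.

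First I would trace the top-right path. By definition $\Phi$ restricts a function to the unit space, so $\Phi(f) = f|_{G^{(0)}}$, and evaluating at $x$ gives $(\operatorname{ev}_x \circ \Phi)(f) = f|_{G^{(0)}}(x) = f(x)$, where $x$ is regarded as a point of $G^{(0)} \subseteq G$. Next I would trace the left-bottom path. The map $\vartheta_x$ restricts $f$ to the isotropy group, $\vartheta_x(f) = f|_{G^x_x} \in C_c(G^x_x)$. Viewing the discrete group $G^x_x$ through its left regular representation on $\ell^2(G^x_x)$, the element $f|_{G^x_x}$ acts as $\sum_{\gamma \in G^x_x} f(\gamma)\lambda_\gamma$, and the canonical trace reads off the coefficient of the identity element. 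Since the identity of the group $G^x_x$ is precisely the unit $x$, this yields $(\tau \circ \vartheta_x)(f) = \langle \delta_x, (f|_{G^x_x})\delta_x\rangle = f(x)$. Comparing the two values, both composites send $f$ to $f(x)$, so they agree on the dense subalgebra $C_c(G)$ and hence on all of $C^*_r(G)$.

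I do not anticipate a genuine obstacle here; the only point requiring care is the bookkeeping in the trace computation, specifically the identification of the identity element of the isotropy group $G^x_x$ with the unit $x \in G^{(0)}$, which is exactly what guarantees that $\tau$ extracts the value $f(x)$ rather than the value at some other group element. Everything else follows from the definitions of $\Phi$, $\vartheta_x$, and $\operatorname{ev}_x$, together with the standard density of $C_c(G)$ in $C^*_r(G)$ and the boundedness of the maps involved.
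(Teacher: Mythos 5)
Your proposal is correct and matches the paper's proof: both compute that each composite sends $f \in C_c(G)$ to $f(x)$ and then conclude by density and continuity. The extra care you take in identifying the identity of $G^x_x$ with the unit $x$ is exactly the point the paper's one-line computation implicitly relies on.
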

\begin{proof}
	If $f \in C_c(G)$ then $\tau(\vartheta_x(f)) = \langle \delta_x, f|_{G^x_x} \delta_x \rangle = f(x) = \operatorname{ev}_x \circ \Phi$. The result follows by continuity.
\end{proof}

We are now ready for the proof of our main result.  The approach we take is modelled on the proof of Theorem 3.1 in \cite{brown2015cartan}.
\begin{proof}[Proof of \thmref{thm:groupoidUniquenessThm}]
	For each $u \in X_\msc{H}$, $H \in \msc{H}$, and $\Gamma \in \operatorname{Bis}(G)$ such that $u \in \Gamma^{-1}\Gamma$ and $\Gamma G_u^u \Gamma^{-1} \subseteq H$ let $S_{u,\Gamma,H}$ denote the collection of states on $C^*_r(\Gamma^{-1} H \Gamma)$ that factor through $C^*_r(G^u_u)$. Let $\rho: C^*_r(G) \rightarrow \mbb{B}(\mc{H})$ be a representation that is faithful on the family of C*-subalgebras $\{ C^*_r(H) \; | \; H \in \msc{H} \}$. Then $\rho$ is faithful on $C^*_r(K) \subseteq C^*_r(H)$ for every open subgroupoid $K$ of $H$. Let $K := \ttrg{\Gamma} H \ttrg{\Gamma}$. Then $K \subseteq H$ is an open subgroupoid, with $K^{(0)} \subseteq \ssrc{\Gamma}$, and moreover $\Gamma^{-1}K\Gamma = \Gamma^{-1}H \Gamma$. By \lemref{InducedPArtialIsoActionLem} $\operatorname{Ad}_{v_{\Gamma}^*}:C^*_r(K) \rightarrow C^*_r(\Gamma^{-1}H\Gamma)$ is an inner subalgebra isomorphism. Hence by \lemref{innerSubalgIsoLem} faithfulness of $\rho$ on $C^*_r(H)$ implies faithfulness on $C^*_r(\Gamma^{-1}H\Gamma)$.

	Thanks to \lemref{CompressibleModGammaHGammaLem} we may apply \cite[Theorem~5.4.]{ArmstrongUniqueness}, and conclude that for each $u \in X_\msc{H}$, $H \in \msc{H}$ and $\Gamma$ with $H^{(0)} \subseteq \ssrc{\Gamma}$ such that $G^u_u \subseteq \Gamma^{-1}H\Gamma$, every state $\varphi \in S_{u,\Gamma,H}$ has a unique state extension $\tilde\varphi$ to $C^*_r(G)$.
 By  \lemref{StateExtensionFaithfullnessLem}, in order to prove the theorem it suffices to  prove that the direct sum $\pi_{S_{u,\Gamma,H}} := \bigoplus_{\varphi \in S_{u,\Gamma,H}} \pi_{\tilde{\varphi}}$ of the GNS representations of the extended states is faithful on $C^*_r(G)$. 
 
 In order to prove this, suppose $\pi_{S_{u,\Gamma,H}}(a)=0$ and consider the canonical faithful conditional expectation $\Phi_G: C^*_r(G) \rightarrow C_0(G^{(0)})$. 
 
 We claim that  $\Phi_G(a^*a) = 0$, which then forces $a=0$ because $\Phi_G$ is faitfhul. Suppose for contradiction that $\Phi_G(a^*a) \neq 0$. Then $\Phi_G(a^*a)$ is a nontrivial positive function on $G^{(0)}$ so by density of $X_\msc{H}$ in $G^{(0)}$ there is a $u \in X_\msc{H}$ such that $\Phi_G(a^*a)(u) > 0$. Let $\epsilon = \frac{\Phi_G(a^*a)(u)}{2}$. By  \lemref{CompressibleModGammaHGammaLem} we can find $b \in C_c(G^{(0)})^+$ with $\|b\| =1$ and $c \in C^*_r(\Gamma^{-1}H\Gamma)$ such that $\norm{ba^*ab - c} < \epsilon/2$ and $\varphi(b)=1$ for all states that factor through $C^*_r(G^u_u)$. 
 By assumption, $\pi_{S_{u,\Gamma,H}}(a) = 0$ and hence also $\pi_{S_{u,\Gamma,H}}(ba^*ab) = 0$ because $\norm{\pi_{S_{u,\Gamma,H}}(ba^*ab)} \leq \norm{\pi_{S_{u,\Gamma,H}}(a^*a)}=0$. If $\varphi \in S_{u,\Gamma,H}$ and $\{e_\lambda\}$ is an approximate unit in $C^*_r(G)$, then
\[
\tilde{\varphi}(ba^*ab) = \lim_\lambda \langle \pi_{\tilde{\varphi}}(ba^*ab)(e_\lambda + N_{\tilde{\varphi}}), (e_\lambda + N_{\tilde{\varphi}}) \rangle = 0
\]
	and therefore 
 \[
 |\tilde{\varphi}(c)| = |\tilde{\varphi}(ba^*ab - c)| \leq \norm{ba^*ab - c} < \epsilon/2.
 \]
	
	Let $\vartheta_u: C^*_r(G) \rightarrow C^*_r(G^u_u)$ be the completely positive contraction extending the restriction map. Then 
 \begin{align*}
 \norm{\vartheta_u(c)} &= \sup  \big\{|\psi(\vartheta_u(c))| \; | \; \psi \text{ is a state on }C^*_r(G^u_u)\big\} \\
 &\leq \sup \big\{ |\varphi(c)| \; | \; \varphi \in S_{u,\Gamma,H} \big\} \\
 &\leq \epsilon/2.
  \end{align*}

Now let $\Phi_{\Gamma,H}: C^*_r(\Gamma^{-1}H\Gamma) \rightarrow C_0((\Gamma^{-1}H\Gamma)^0)$ be the conditional expectation given by restricting functions to the unit space. For $x\in (\Gamma^{-1}H\Gamma)^0$ let $\operatorname{ev}_x$ be  evaluation at $x$ on $C_0((\Gamma^{-1}H\Gamma)^0)$. Then $\operatorname{ev}_x \circ \Phi_{\Gamma,H}$ is a state on $C^*_r(\Gamma^{-1}H\Gamma)$; moreover for each $x \in X_{\Gamma^{-1}H\Gamma} = \{x \in (\Gamma^{-1}H\Gamma)^0 \; | \; G_x^x \subseteq \Gamma^{-1}H\Gamma \}$ the state $\operatorname{ev}_x \circ \Phi_{\Gamma,H}$ factors through $C^*_r(G^x_x)$ by \lemref{RestrictionsCommuteLemma}. Let $\tau:C^*_r(G^u_u) \rightarrow \C$ be the canonical tracial state. Since $\Phi_G|_{\Gamma^{-1}H\Gamma} = \Phi_{\Gamma,H}$ we have
$$|\operatorname{ev}_u \circ \Phi_G(c)| = |\operatorname{ev}_u \circ \Phi_{\Gamma,H}(c)| = |\tau(\vartheta_u(c))| \leq \norm{\vartheta_u(c)} \leq \epsilon/2.$$
Since $\Phi_G$ is $C_0(G^{(0)})$-bilinear and $\operatorname{ev}_u$ is multiplicative,
\[
\operatorname{ev}_u \circ \Phi_G(ba^*ab) = \operatorname{ev}_u(b) (\operatorname{ev}_u \circ \Phi_G)(a^*a) \operatorname{ev}_u(b) = (\operatorname{ev}_u \circ \Phi_G)(a^*a).
\]
Thus 
\begin{align*}
	|\operatorname{ev}_u \circ \Phi_G(a^*a)| &\leq |\operatorname{ev}_u \circ \Phi_G(a^*a) - \operatorname{ev}_u \circ \Phi_G(c) + \operatorname{ev}_u \circ \Phi_G(c)|\\
	&\leq |\operatorname{ev}_u \circ \Phi_G(ba^*ab) - \operatorname{ev}_u \circ \Phi_G(c)| + |\operatorname{ev}_u \circ \Phi_G(c)|\\
	&\leq \norm{ba^*ab - c} + |\operatorname{ev}_u \circ \Phi_G(c)|\\
	&< \epsilon.
\end{align*}
But this contradicts our choice of $\epsilon$, hence $\Phi_G(a^*a) = 0$ and therefore $a=0$. It follows that $\pi_{S_{u,\Gamma,H}}$ is injective.
\end{proof}
  {\begin{rem}
     Before we launch into  applications of the main result in the following two sections, we would like to point out that, as indicated after the definition, it is often the case that a singleton $\msc{H}$ suffices to constrain  the isotropy in a useful way. Of course, in a trivial way this is always true because it is possible to choose $\msc{H} =\{ \msc{G}\}$. The point is that by making the open subgroupoids as small as possible, maybe at the cost of taking more of them, one gets a useful criterion. As an example, consider a direct sum of full matrix algebras, which is the groupoid C*-algebra of a disjoint union of groupoids, in which a constraining family is obtained by including a trivial groupoid (consisting of a single point) in $\msc{H}$ for each summand.
 \end{rem}}

\section{ C*-algebras of Small Categories}\label{sec:smallcats}

In \cite{SpielbergSmallCats} Spielberg associates certain C*-algebras to left cancellative small categories (LCSCs), generalizing his earlier work on C*-algebras of categories of paths \cite{SpielbergPathCats}.  These include C*-algebras of $k$-graphs and quasi-lattice ordered groups.

\begin{defn}
	A \textit{left cancellative small category (LCSC)} is a small category $\mc{C}$ such that $xy = xz \implies y=z$ for all $x,y,z \in \mc{C}$ with $xy, xz \in \mc{C}$. 
\end{defn}

 A left cancellative monoid is just a LCSC with one object, and the aim of this section is to use the results of the previous section to provide a version of the faithfulness/uniqueness theorem of Laca and Sehnem \cite[Theorem~5.1]{LS} in the more general setting of LCSCs.  
 
 The key fact is that left cancellation implies that the partially defined left multiplication maps given by 
 $\ssrc{c}\mc{C} \rightarrow c \mc{C}, \ x \mapsto cx$ for $c \in \mc{C}$  are partial bijections. 
 Hence for each $c \in \mc{C}$ there is a partial isometry $T_c: \ell^2(\mc{C}) \rightarrow \ell^2(\mc{C})$ determined by 
	\[ T_c \delta_x = \begin{cases} 
     \delta_{cx} & \text{ if } \textbf{s}(c)=\textbf{t}(x) \\
      0 & \text{otherwise}. 
   \end{cases}
\]
	The map $c\mapsto T_c$ is the left regular representation $T: \mc{C} \to \mbb{B}(\ell^2(\mc{C}))$, and we are interested in the C*-algebra generated by its image.

\begin{defn}
	The left reduced C*-algebra of the LCSC $\mc{C}$ is  
	\[
 C^*_r(\mc{C}) := C^*(\{ T_c \; | \; c \in \mc{C}  \}) 
 \subseteq \mbb{B}(\ell^2(\mc{C})).
 \]
 \end{defn}

On our way  towards Spielberg's groupoid model for $C^*_r(\mc{C})$ we will use the inverse semigroup approach to $C^*_r(\mc{C})$ described in  {\cite{PO2020}, see also \cite{LiGarsideCats}}, because it allows us to make easy comparison with semigroup C*-algebras.  Thus we turn our attention to a natural inverse semigroup arising from a LCSC.

\begin{defn}(\cite[Definition~2.3]{LiGarsideCats})
	The {\em left inverse hull} $\mc{I}_l(\mc{C})$ of a LCSC $\mc{C}$ is the inverse subsemigroup of the symmetric monoid $\mc{I}(\mc{C})$ generated by the left multiplication maps $\ssrc{c}\mc{C} \rightarrow c \mc{C}, \ x \mapsto cx$ for $c \in \mc{C}$.  
\end{defn}
When no confusion can arise we may omit $\mathcal{C}$ from the notation and simply write $\mathcal{I}_l$ instead of $\mc{I}_l(\mc{C})$.
 We often abuse notation and identify an element $c \in \mc{C}$ with the corresponding left multiplication map $c \in \mc{I}_l(\mc{C})$. We denote its inverse map in $\mc{I}_l$ by $c\inv$; by definition,  this is the map defined by $c \mc{C} \rightarrow \ssrc{c}\mc{C}, \ cx \mapsto x$, but when $c$ is invertible in $\mc{C}$, the inverse map $c\inv$ coincides with (left multiplication by) the element $c\inv \in \mc{C}$.
Notice that the set $\mc{C}^* := \mc{C} \cap \mc{C}\inv$ of invertibles in $\mc{C}$ is a groupoid.

An arbitrary element $s\in \mc{I}_l$ may be written in the form $d_n\inv c_n\cdots d_1\inv c_1$ where  $d_i,c_i \in \mc{C}$ for $i = 1, 2, \ldots , n$ and the successive concatenations are pairwise composable in $\mc{C}$; these are the {\em zigzag maps} of \cite[Definition 2.6]{SpielbergSmallCats}.  
 The representation $T :\mc{C} \to \mbb{B}(\ell^2(\mc{C}))$ can be extended to all of $\mc{I}_l$ by setting $T_{d_n^{-1}c_n\cdots d_1^{-1}c_1} := T_{d_{n}}^*T_{c_n}\cdots T_{d_{1}}^*T_{c_1}$,
 see \cite[Lemma 11.1]{SpielbergSmallCats}.
 Clearly  $C^*(\{ T_c \; | \; c \in \mc{C}  \}) = \overline{\operatorname{span}} \{ T_{s} \; | \; s \in \mc{I}_l \}$.
So $C^*_r(\mc{C})$ is precisely the C*-algebra denoted by $\mathcal T_{\ell}(\mc{C})$ and called the regular Toeplitz algebra of $\mc{C}$ by Spielberg in \cite[Definition 11.2]{SpielbergSmallCats}.

	As pointed out at the beginning of \cite[Section 8]{SpielbergSmallCats}, there are C*-algebraic relations satisfied in $C_r^*(\mc{C}) = \mathcal T_{\ell}(\mc{C})$ that are not captured by the bare inverse semigroup structure of $\mc{I}_l(\mc{C})$, see also \cite[Section 3]{LS}. Indeed, in some cases the equality $f = \vee_{i=1}^n e_i$ holds for idempotents in $\mc{I}_l$ and hence is respected by the left regular representation, but it is not derived from the inverse semigroup structure alone. One needs to consider  the order theoretic relations between idempotents. 
 Recall that $E(\mc{I}(X))$ denotes the semilattice of idempotents of the symmetric monoid $\mc{I}(X)$ on a set $X$, and that $E(\mc{I}(X))$ is canonically isomorphic to the power set $\mc{P}(X)$, which is a lattice. We use this isomorphism to identify idempotents in $\mc{I}_l(\mc{C})$ with subsets of $\mc{C}$.

\begin{rem}
The zero element in the symmetric monoid $\mc{I}(\mc{C})$ is the partial bijection whose domain and range are the empty set; it is also the zero of the semilattice of idempotents  $E(\mc{I}(\mc{C})) \cong \mathcal{P(C)}$. We denote this element by $0$. Whether $0 \in \mc{I}_l$ depends on whether the empty set may be written as an intersection $\bigcap_{s \in F} \dom{s}$ for a finite set of elements $F\subseteq \mc{I}_l$.
\end{rem}

The images of the idempotents in $\mc{I}_l$ form a commuting family of projections that is closed under multiplication, so their closed linear span is a commutative C*-subalgebra of $C^*_r(\mc{C})$.

\begin{defn}
	Let $\mc{C}$ be a LCSC.  We define the {\em diagonal subalgebra} of $C^*_r(\mc{C})$ to be 
	$D_r(\mc{C}):= \overline{\operatorname{span}}\{T_e \; | \; e \in E(\mc{I}_l)\}.$
\end{defn}

 We note that when $0$ is in $\mc{I}_l$ the element $T_0$ is the zero element of the C*-algebra.


 \begin{defn} 
    Let $E$ be a (meet) semilattice viewed as a semigroup of idempotents and let $\{ 0,1\}$ be the two element boolean algebra. A \textit{semicharacter} $\chi$ is a nonzero multiplicative map $\chi: E \rightarrow \{0,1\}$. The space $\widehat{E}$ is the collection of semicharacters on $E$ with the topology of pointwise convergence.
\end{defn} 

In order to lighten the notation, for the remainder of this section we make the following simplifications: 
\begin{itemize}
\item $\mc{C}$ is a LCSC
\item $\mc{I}_l = \mc{I}_l(\mc{C})$ is the left inverse hull of $\mc{C}$
\item $E = E(\mc{I}_l)$ is the semilattice of idempotents of $\mc{I}_l$
\item $D_r = D_r(\mc{C})$ is the diagonal subalgebra of $C^*_r(\mc{C})$
\item $\Omega := \{\chi \in \widehat{E} \; | 
	\; \chi(f) = \vee_{i=1}^n \chi(e_i), \text{ whenever } f, e_i  \in \mc{I}_l \text{ and } f = \vee_{i=1}^n e_i\}$ is the subspace of $\widehat{E}$ consisting of semicharacters preserving any finite suprema that exist in $\mc{I}_l$.
\end{itemize}

Next we show that the necessary extra relations on $E$ can be captured via the spectrum of $D_r$.
\begin{lem}\label{SpecDiagLem}
	The map that sends a character $\chi \in \operatorname{Spec}(D_r)$
 to its restriction $\chi|_E$ to the subset $ \{ T_e \; | \; e \in E\} \subset D_r$ 
 is a homeomorphism of $\operatorname{Spec}(D_r)$ onto $\Omega$.
\end{lem}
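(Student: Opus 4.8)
The plan is to prove that the restriction map $R\colon\operatorname{Spec}(D_r)\to\widehat E$ sending $\chi$ to the semicharacter $\chi|_E\colon e\mapsto\chi(T_e)$ is a homeomorphism onto $\Omega$, handling the routine features first and isolating surjectivity as the real content. First I would check that $R$ is well defined and lands in $\Omega$. Each $T_e$ is a projection and $\chi$ is a character, so $\chi(T_e)\in\{0,1\}$; since $T_eT_f=T_{ef}$ and $\chi$ is multiplicative, $\chi|_E$ is a semilattice homomorphism, i.e.\ $\chi|_E\in\widehat E$. Because $D_r$ is commutative, Gelfand theory identifies $\chi$ with evaluation at a point, so $\chi$ carries a finite join of projections to the join of the values, $\chi\bigl(\bigvee_i T_{e_i}\bigr)=\bigvee_i\chi(T_{e_i})$. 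Using the fact from the setup that a supremum $f=\bigvee_i e_i$ in $\mc{I}_l$ is respected by the left regular representation, i.e.\ $T_f=\bigvee_i T_{e_i}$ in $D_r$, this yields $\chi|_E(f)=\bigvee_i\chi|_E(e_i)$, so $\chi|_E\in\Omega$.

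Next I would dispatch injectivity, continuity, and the homeomorphism property simultaneously. Since $\lsp\{T_e:e\in E\}$ is dense in $D_r$ and $\chi\mapsto\chi(a)$ is norm-Lipschitz uniformly in $\chi$, the weak-$*$ topology on $\operatorname{Spec}(D_r)$ is generated by the evaluations $\chi\mapsto\chi(T_e)$; as these are $\{0,1\}$-valued, the clopen sets $\{\chi:\chi(T_e)=1\}$ and their complements form a subbasis. These correspond under $R$ exactly to the subbasis $\{\omega:\omega(e)=1\}$ of the pointwise topology on $\Omega$, so $R$ is injective (a character is determined by its values on a dense subalgebra) and is a homeomorphism onto its image.

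The hard part will be surjectivity onto $\Omega$, and this is where the definition of $\Omega$ earns its keep. I would write $D_r=\overline{\bigcup_F D_F}$, where $F$ ranges over the finite meet-subsemilattices of $E$ and $D_F:=\lsp\{T_e:e\in F\}$ is a finite-dimensional commutative C*-algebra. Fix $\omega\in\Omega$. For each $F$ I would construct a character $\chi_F$ of $D_F$ with $\chi_F(T_e)=\omega(e)$ for $e\in F$ by producing a nonzero minimal projection $Q$ below $T_u$, where $u$ is the meet of $\{e\in F:\omega(e)=1\}$, that is orthogonal to every $T_f$ with $\omega(f)=0$: any atom below such a $Q$ induces the desired $\chi_F$. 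The only obstruction to $Q\neq 0$ is the relation $T_u=\bigvee_{f:\,\omega(f)=0}T_{uf}$; but such a join of projections comes from a supremum $u=\bigvee_f uf$ in $\mc{I}_l$, which $\omega\in\Omega$ must respect, forcing the contradiction $\omega(u)=\bigvee_f\omega(u)\omega(f)=0\neq1$. Thus $\chi_F$ exists; and since a character of $D_F$ is determined by its values on the generators, the family $\{\chi_F\}$ is coherent and assembles to a contractive multiplicative functional on $\bigcup_F D_F$ that extends by continuity to a character $\chi\in\operatorname{Spec}(D_r)$ with $\chi|_E=\omega$.

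I expect the genuine obstacle to be precisely the dictionary between suprema in $\mc{I}_l$ and joins of the associated projections in $D_r$ invoked twice above: the equivalence of $\dom{f}=\bigcup_i\dom{e_i}$, of $T_f=\bigvee_i T_{e_i}$ in $D_r$, and of $f=\bigvee_i e_i$ in $E$ is what makes the defining condition of $\Omega$ coincide exactly with the relations that a character of $D_r$ must obey. Verifying this correspondence carefully, so that ``respecting all finite suprema in $\mc{I}_l$'' is neither stronger nor weaker than ``annihilating every join relation among the $T_e$,'' is the step most likely to require care.
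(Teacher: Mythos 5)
Your proof is correct, but it takes a genuinely different route from the paper's on both of the nontrivial points. For the topology and injectivity, the paper passes through the universal inverse-semigroup C*-algebra $C^*(E)$: the surjection $C^*(E)\to D_r$ realizes $\operatorname{Spec}(D_r)$ inside $\operatorname{Spec}(C^*(E))\cong\widehat E$, and the homeomorphism is read off from a commutative square; you instead argue directly that the weak-$*$ topology on $\operatorname{Spec}(D_r)$ is generated by the $\{0,1\}$-valued evaluations at the spanning projections $T_e$, which is more elementary and avoids introducing $C^*(E)$ altogether. For surjectivity onto $\Omega$ --- which you rightly identify as the real content --- the paper is terse: it asserts that any $\chi\in\Omega$ ``preserves the multiplicative and linear relations among the $T_e$'' and hence extends to a character, leaving implicit exactly the dictionary you worry about, namely that every linear relation among the projections $T_e$ is accounted for by the join relations that $\Omega$ is defined to respect. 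Your finite-dimensional approximation (characters of $D_F$ for finite meet-closed $F\subseteq E$, built from the nonvanishing of the atom $T_u-\bigvee_{\omega(f)=0}T_{uf}$, with nonvanishing guaranteed precisely because $T|_E$ is faithful and join-compatible so that vanishing would force a join relation in $\mc{I}_l$ that $\omega$ must respect) supplies exactly the missing verification, at the cost of a longer argument; the paper's version buys brevity by leaning on the universal property of $C^*(E)$ and on the reader granting the linear-relations claim. One small point to tidy in a final write-up: your construction of $\chi_F$ needs the set $\{e\in F:\omega(e)=1\}$ to be nonempty, which you can arrange by restricting to the cofinal family of $F$ containing a fixed $e_0$ with $\omega(e_0)=1$ (such $e_0$ exists since semicharacters in $\widehat E$ are nonzero).
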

\begin{proof}
	The map $T: E \rightarrow D_r, \ e \mapsto T_e$ is a faithful representation of the semigroup $E$. The restriction $\chi|_E$  of $\chi \in \operatorname{Spec}(D_r)$ to this semigroup is a semicharacter, but it also preserves joins in $E$ because whenever $e,f \in E$ with $e \vee f \in E$ we have $T_{e \vee f} = T_e + T_f - T_{ef}$, which is respected by the multiplicative linear functional $\chi$. Conversely every $\chi \in \Omega$ preserves the multiplicative and linear relations among the $T_e$ hence extends to a character on $D_r = \overline{\operatorname{span}}\{T_e \; | \; e \in E\}$. This shows that restriction is a bijection between $\operatorname{Spec}(D_r)$ and $\Omega$. To show that this bijection is indeed a homeomorphism we note that $D_r$ admits a surjection $C^*(E) \rightarrow D_r$ from the universal C*-algebra generated by a representation of the inverse semigroup $E$. Explicitly $C^*(E)$ is generated by projections $\{t_e \; | \; e \in E\}$ subject to the relations $t_et_f = t_{ef}$ and $t_{e^*} = t_e^*$. This realizes $\operatorname{Spec}(D_r)$ as a subspace of $\operatorname{Spec}(C^*(E))$. On the other hand the universal property of $C^*(E)$ gives a 1-1 correspondence between semicharacters $E \rightarrow \{ 0,1\}$ and characters $C^*(E) \rightarrow \C$. Since the weak-* topology on $\operatorname{Spec}(C^*(E))$ is the topology of pointwise convergence on $\widehat{E}$, and since the diagram 
\[\begin{tikzcd}
	{\operatorname{Spec(C^*(E))}} & {\widehat{E}} \\
	{\operatorname{Spec(D_r)}} & \Omega
	\arrow["{\chi \mapsto \chi|_E}", from=1-1, to=1-2]
	\arrow["{\chi \mapsto \chi|_E}", from=2-1, to=2-2]
	\arrow[hook, from=2-1, to=1-1]
	\arrow[hook, from=2-2, to=1-2]
\end{tikzcd}
\]
is commutative, the result follows because the bottom horizontal map can be seen as  a restriction of the top horizontal map, which is a homeomorphism.
\end{proof}

\begin{rem}
	An alternate proof of \lemref{SpecDiagLem} may be obtained using \cite[Corollary~5.6.26.]{cuntzKTheoryGroupAlgebras2017}.
\end{rem}
Just as in the case of  semigroups, here too the spectrum of the diagonal can be seen as a compactification of $\mc{C}$ via a canonical 
 dense embedding $\mc{C} \hookrightarrow \Omega$ as {\em principal characters}.
\begin{lem}\label{semicharactersDenseLem}\cite[Lemma~2.12.]{LiGarsideCats}
For each $c \in \mc{C}$ define $ \chi_c:E \rightarrow \{0,1\} $ by
 \[
 \chi_c(e) = \begin{cases}   1 & \text{ if } cc^{-1} \leq e \\ 0&\text{ otherwise.} 
 \end{cases} 
 \]
 Then $\chi_c$ is a semicharacter and 
$\{\chi_c \; | \; c \in \mc{C}\}$ is a dense subset of $\Omega$.	
\end{lem}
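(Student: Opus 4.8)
The plan is to prove the two assertions separately, reducing everything to the single observation that, for $c \in \mc{C}$ and $e \in E$, one has $\chi_c(e) = 1$ precisely when $c \in \dom{e}$. Indeed, the domains of the elements of $\mc{I}_l$ (equivalently, the idempotents of $\mc{I}_l$ viewed as subsets of $\mc{C}$) are the constructible right ideals of $\mc{C}$, hence in particular right ideals: the generating domains $\ssrc{c}\mc{C}$ and $c\mc{C}$ are right ideals, and the operations $X \mapsto cX$, $X \mapsto \{y : cy \in X\}$, and finite intersection all preserve this property. Since $\dom{cc^{-1}} = c\mc{C}$, the condition $cc^{-1} \le e$, which means $c\mc{C} \subseteq \dom{e}$, is equivalent to $c \in \dom{e}$ once $\dom{e}$ is known to be a right ideal. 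With this dictionary in hand the semicharacter property and membership in $\Omega$ both become statements about set-membership of $c$: multiplicativity follows from $\dom{ef} = \dom{e} \cap \dom{f}$, nonvanishing from $c \in \dom{cc^{-1}}$, and preservation of a join $f = \vee_i e_i$ (which, under the identification $E \subseteq \mc{P}(\mc{C})$, means $\dom{f} = \bigcup_i \dom{e_i}$) from the tautology that $c$ lies in a union if and only if it lies in one of its members. This last point in particular shows $\chi_c \in \Omega$.

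For density I would equip $\widehat E$ with the topology of pointwise convergence, so that a basic neighbourhood of a given $\chi \in \Omega$ is cut out by finitely many idempotents $e_1, \dots, e_m$ with $\chi(e_i) = 1$ together with finitely many $f_1, \dots, f_k$ with $\chi(f_j) = 0$. Setting $e := e_1 \cdots e_m$ (and, using that $\chi$ is a nonzero semicharacter, including one idempotent of value $1$ so that $\chi(e) = 1$ even in the degenerate case $m = 0$), it then suffices to produce $c \in \mc{C}$ with $c \in \dom{e}$ and $c \notin \dom{f_j}$ for every $j$; for then $c \in \dom{e} \subseteq \dom{e_i}$ gives $\chi_c(e_i) = 1$, while $c \notin \dom{f_j}$ gives $\chi_c(f_j) = 0$, so that $\chi_c$ lies in the chosen neighbourhood.

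The crux is to show that $\dom{e} \setminus \bigcup_j \dom{f_j}$ is nonempty, and here I would argue by contradiction. If $\dom{e} \subseteq \bigcup_j \dom{f_j}$, then $\dom{e} = \bigcup_j \big(\dom{e} \cap \dom{f_j}\big) = \bigcup_j \dom{e f_j}$. Since $\dom{e}$ is itself a constructible ideal, this equality exhibits $e$ as the join $e = \vee_j (e f_j)$ of a finite family of idempotents that genuinely exists in $\mc{I}_l$. As $\chi \in \Omega$ preserves such joins, we would obtain $1 = \chi(e) = \bigvee_j \chi(e f_j) = \bigvee_j \chi(e)\chi(f_j) = \bigvee_j 0 = 0$, a contradiction. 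Hence the difference is nonempty and any $c$ in it completes the argument.

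The main obstacle is precisely this final step: recognizing that a putative covering $\dom{e} \subseteq \bigcup_j \dom{f_j}$ is the \emph{same datum} as a join relation $e = \vee_j (e f_j)$ valid inside $\mc{I}_l$, so that the defining property of $\Omega$ can be invoked. Everything else is bookkeeping with right ideals and with $E$ as a sub-semilattice of $\mc{P}(\mc{C})$; the one technical point worth checking carefully is that $\bigcup_j \dom{e f_j}$ is actually a constructible ideal (it is, being equal to $\dom{e}$), so that the supremum really does exist in $\mc{I}_l$ and the $\Omega$-condition genuinely applies.
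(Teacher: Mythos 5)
Your proof is correct and follows essentially the same route as the paper's: identify idempotents of $E$ with subsets of $\mc{C}$, observe that $\chi_c$ lies in the basic open set $\widehat{E}(e,F)$ exactly when $c \in e \setminus \cup F$, and show this set difference is nonempty. The paper records that nonemptiness as an immediate ``observation''; your contradiction argument via the join $e = \vee_j (e f_j)$ and the join-preservation property defining $\Omega$ is precisely the justification it leaves implicit (and is the correct form of the claim, since it is membership of the given character in $\Omega$, not mere nonemptiness of $\widehat{E}(e,F)$, that forces $e \setminus \cup F \neq \emptyset$).
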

\begin{proof}
	One easily verifies that each $\chi_c \in \Omega$. Recall that a basic open subset of the spectrum $\widehat{E}$ of the semilattice $E$ is of the form $\widehat{E}(e,F) := \{ \chi \; | \; \chi(e) = 1, \chi(f) = 0 \text{ for all }f \in F\}$. To see density, first note that $E$ is a semilattice which is concretely represented as a subset of the power set of $\mc{C}$, thus we can view elements of $E$ as sets. Density then follows immediately from the observation that if $\widehat{E}(e,F) \neq \emptyset$ then $e \setminus \cup_{f \in F} f \neq \emptyset$, in which case $\chi_c \in \widehat{E}(e,F)$ for each $c \in e \setminus \cup_{f \in F} f$.
\end{proof}

Every inverse semigroup $S$ has a canonical action by partial homeomorphisms of the spectrum $\widehat{E(S)}$ of its semilattice of idempotents. The domain of $s \in S$ is $D_{s^{-1}s} = \{ \chi\in \widehat{E(S)} \; | \; \chi(s^{-1}s) = 1 \}$ and  the image under $s$ of  $\chi \in D_{s\inv s}$ is the character $s\cdot \chi$,  given by $(s \cdot \chi) (e) = \chi(s\inv e s)$. It is easy to verify that $s\cdot \chi \in D_{ss\inv}$.  We next show that $\Omega$ is invariant under this action.

\begin{lem}\label{lem:Omegaisinvariant}
	$\Omega$ is an $\mc{I}_l$-invariant subset of $\widehat{E}$.
\end{lem}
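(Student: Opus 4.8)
The plan is to fix $s \in \mc{I}_l$ and a semicharacter $\chi \in \Omega$ lying in the domain $D_{s^{-1}s}$ of $s$ (so that $\chi(s^{-1}s)=1$ and $s\cdot\chi$ is defined), and to show directly that $s\cdot\chi \in \Omega$; this is exactly what invariance under the partial action means. Since $s\cdot\chi$ is already a semicharacter by the general theory of the canonical action of $\mc{I}_l$ on $\widehat{E}$ described just above, the only thing left to verify is that $s\cdot\chi$ preserves finite suprema that exist in $\mc{I}_l$. So I would take idempotents $f, e_1, \dots, e_n \in E$ with $f = \vee_{i=1}^n e_i$ in $\mc{I}_l$, and, using the defining formula $(s\cdot\chi)(e) = \chi(s^{-1}es)$, reduce the goal to the single equality $\chi(s^{-1}fs) = \vee_{i=1}^n \chi(s^{-1}e_i s)$.

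The key step is to show that conjugation by $s$ transports the supremum relation $f = \vee_i e_i$ to a supremum relation $s^{-1}fs = \vee_i s^{-1}e_i s$ in $\mc{I}_l$. First, each $s^{-1}es$ is again an idempotent of $\mc{I}_l$: since idempotents commute, $(s^{-1}es)^2 = s^{-1}e(ss^{-1})es = s^{-1}e(ss^{-1})s = s^{-1}es$, so $s^{-1}fs, s^{-1}e_i s \in E$. To identify the join I would pass to the concrete model, identifying an idempotent $e \in E$ with the subset $\dom{e} \subseteq \mc{C}$ on which the corresponding partial bijection is the identity. Under this identification the supremum $f = \vee_i e_i$ in $\mc{I}_l$ is realized as the union $\dom{f} = \bigcup_i \dom{e_i}$ (this is precisely what makes the relation respected by the left regular representation, via the inclusion–exclusion identity $T_{e\vee f} = T_e + T_f - T_{ef}$ used in \lemref{SpecDiagLem}). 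Viewing $s$ as a partial bijection of $\mc{C}$, a direct check shows that $s^{-1}es$ is the identity on the preimage $s^{-1}(\dom{e})$. As preimages under a function preserve finite unions, $\dom{(s^{-1}fs)} = s^{-1}(\dom{f}) = s^{-1}\bigl(\bigcup_i \dom{e_i}\bigr) = \bigcup_i s^{-1}(\dom{e_i}) = \bigcup_i \dom{(s^{-1}e_i s)}$, which exhibits $s^{-1}fs = \vee_i s^{-1}e_i s$ as a supremum in $\mc{I}_l$.

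With the join relation transported, the conclusion is immediate: because $\chi \in \Omega$ preserves suprema that exist in $\mc{I}_l$, we obtain $\chi(s^{-1}fs) = \vee_i \chi(s^{-1}e_i s)$, which is exactly $(s\cdot\chi)(f) = \vee_i (s\cdot\chi)(e_i)$. Since $f = \vee_i e_i$ was an arbitrary join in $\mc{I}_l$, this shows $s\cdot\chi \in \Omega$, and as $s$ and $\chi$ were arbitrary, $\Omega$ is $\mc{I}_l$-invariant. I expect the only genuine subtlety to be the claim that the supremum $f = \vee_i e_i$ computed in $\mc{I}_l$ is realized as the union of the domains $\dom{e_i}$ rather than as some larger constructible set; this is the point that ties the order-theoretic relation to the concrete structure and must be justified carefully (one direction is easy, since an idempotent whose domain equals $\bigcup_i \dom{e_i}$ is automatically the least upper bound of the $e_i$ in $E$). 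Everything else—idempotency of $s^{-1}es$ and the preservation of unions under preimages—is routine.
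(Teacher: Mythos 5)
Your proposal is correct and follows essentially the same route as the paper: the paper's proof is exactly the computation $(s\cdot\chi)(\vee_i e_i) = \chi(s^{-1}(\vee_i e_i)s) = \chi(\vee_i s^{-1}e_i s) = \vee_i \chi(s^{-1}e_i s)$, with the middle equality (conjugation distributes over joins) simply asserted. Your domain/preimage argument supplies the justification for that step, and the subtlety you flag is settled by the paper's standing convention that joins of idempotents are computed in $\mc{P}(\mc{C})$ under the identification of $E$ with subsets of $\mc{C}$, so the join is indeed the union of domains.
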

\begin{proof}
	Let $\chi \in \Omega$ and suppose $\chi \in D_{s^{-1}s}$ for some $s \in \mc{I}_l$. We claim that $s\cdot \chi \in \Omega$. Suppose $f$ is an idempotent which can be written as a join $f = \bigvee_{i=1}^n e_i$ of a finite set of idempotents. Then 
	
\begin{multline*}(s\cdot\chi)\left(\bigvee_{i=1}^n e_i\right) = \chi\left(s^{-1} \bigvee_{i=1}^n e_i s\right) = \chi\left(\bigvee_{i=1}^n s^{-1} e_i s\right) \\
= \bigvee_{i=1}^n \chi(s^{-1}e_i s) = \bigvee_{i=1}^n (s\cdot \chi)(e_i)
\end{multline*}
	so $\chi \in \Omega$ as desired.  Note that the uses of distributivity in this calculation are justified by \cite[Proposition~1.2.2]{lawsonInverseSemigroups1998} and the fact that the above joins exist in $E$.
\end{proof}

In order to obtain a groupoid that includes the extra relations encoded in the diagonal, we simply  enlarge the underlying inverse semigroup in such a way that the idempotents become a Boolean algebra. 
\begin{defn}(cf. \cite[Definition~2.8.]{SpielbergSmallCats})
	Let $\mc{C}$ be a LCSC and let $E$ be the semilattice of idempotents of $\mc{I}_l$. The \textit{boolean closure} $\overline{E}$ of $E$ in $\mc{I(C)}$ is the collection of subsets of $\mc{C}$ of the form $\bigvee_{i=1}^n \left (e_i \setminus \bigvee_{j=1}^{k_i} f_{ij} \right)$ where $f_{ij}, e_i \in E$ and $f_{ij} \leq e_i$ for all $1 \leq j \leq k_i$ and $1 \leq i \leq n$. 
 We also enlarge the left inverse hull $\mc{I}_l(\mc{C})$ to 
	\[
 \overline{\mc{I}}_l (\mc{C}):= \{se \; | \; s \in \mc{I}_l, e \in \overline{E}, e \leq s^{-1}s \}
 \]
 which is an inverse semigroup with semilattice of idempotents $\overline{E}$. 
\end{defn}
It follows from \lemref{lem:Omegaisinvariant} that the action of $\mc{I}_l$ on $\widehat{E}$ restricts to an action of $\mc{I}_l$ on $\Omega$. Now for every $\chi \in \Omega$ we may extend $\chi$ to an element $\overline{\chi}$ of $\widehat{\overline{E}}$ by setting $\overline{\chi}(\bigvee_{i=1}^n (e_i \setminus \bigvee_{j=1}^{k_i} f_{ij})) = \bigvee_{i=1}^n (\chi(e_i) \setminus \bigvee_{j=1}^{k_i} \chi(f_{ij}))$, where the operations on the right side are to be understood in the sense of the two-element Boolean algebra $\{0, 1\}$. This gives an embedding $\Omega \hookrightarrow \widehat{\overline{E}}$ and the canonical action of $\overline{\mc{I}_l}$ on $\widehat{\overline{E}}$ restricts to an action on $\Omega$.

\begin{defn}\label{LCSCGroupoidDef}( {\cite[p. 140]{Paterson1999}, see also \cite[Definition 4.6]{Exel2008}})
Let $\overline{\mc{I}}_l * \Omega = \{(s,\chi) \in \overline{\mc{I}}_l \times \Omega \; | \; \chi(s^{-1}s) = 1\}$, and define an equivalence relation on $\overline{\mc{I}}_l * \Omega$ by setting 
\[
(s,\chi) \sim (t, \psi)\iff \begin{cases} 
  \chi = \psi \text{ and}\\ se = te 
  \text{ for some idempotent } e  \in \overline{\mc{I}}_l \text{ with }  \chi(e) =1  .
\end{cases}
\]
We denote the equivalence class of a pair $(s,\chi)$ by $[s,\chi]$.

The \emph{transformation groupoid} of a left cancellative small category $\mc{C}$ has underlying set $\overline{\mc{I}}_l \ltimes \Omega := (\overline{\mc{I}}_l * \Omega ) /\sim $.  The source and target maps are given by $\ssrc{[r,\chi]} = \chi$ and $\ttrg{[r,\psi]} = r\cdot \psi$ respectively. The inverse of $[s,\chi] \in \overline{\mc{I}}_l \ltimes \Omega$ is given by $[s^{-1}, s \cdot \chi]$ and the product of $[s,t\cdot \chi]$ and $[t,\chi]$ is $[st,\chi]$. Then $\overline{\mc{I}}_l \ltimes \Omega$ becomes an \'etale groupoid when given the topology with basis of open sets $\{ [s, U] \; | \; s \in \overline{\mc{I}}_l, U \text{ is an open subset of }D_{s^{-1}s} \}.$
\end{defn}

In order to apply our relative topological principality results to the reduced C*-algebra of a LCSC we will rely on its realization as a groupoid C*-algebra given by Spielberg.

\begin{thm}\cite[Theorem~11.6.]{SpielbergSmallCats}
	Let $\mc{C}$ be a LCSC and suppose that the groupoid $\overline{\mc{I}}_l \ltimes \Omega$ is Hausdorff. Then $ C^*_r(\mc{C}) \cong C^*_r(\overline{\mc{I}}_l \ltimes \Omega) $.
\end{thm}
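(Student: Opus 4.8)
The plan is to realize both sides as completions of a common dense $*$-algebra and to show that the left regular representation $T$ of $\mc{C}$ is unitarily equivalent to a direct sum of exactly the regular representations that define the reduced norm on $C^*_r(\overline{\mc{I}}_l \ltimes \Omega)$; density of the principal characters will then upgrade this equivalence to an isometric isomorphism. Write $G := \overline{\mc{I}}_l \ltimes \Omega$, so that $G^{(0)} = \Omega$. The Hausdorff hypothesis is what makes $C_c(G)$ a genuine $*$-algebra of continuous compactly supported functions, spanned by functions supported on the basic bisections $[s,U]$ of \defref{LCSCGroupoidDef}, with reduced norm $\norm{g}_r = \sup_{\chi \in \Omega}\norm{\pi_\chi(g)}$, where $\pi_\chi$ is the regular representation on $\ell^2(G_\chi)$ for $G_\chi = \{[s,\chi] : \chi(s^{-1}s)=1\}$. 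On the category side, $C^*_r(\mc{C}) = \overline{\operatorname{span}}\{T_s : s \in \overline{\mc{I}}_l\}$, and the diagonal $D_r$ acts by the diagonal multiplication operators on $\ell^2(\mc{C})$, with $D_r \cong C_0(\Omega) = C_0(G^{(0)})$ by \lemref{SpecDiagLem}.

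First I would build a surjective $*$-homomorphism $\Theta : C^*_r(G) \to C^*_r(\mc{C})$. The diagonal action of $C_0(\Omega)$ on $\ell^2(\mc{C})$ together with the partial shifts $\{T_s\}$ should be checked to satisfy the covariance relations governing $G$, so that the universal property of $C^*(G)$ yields a representation $\tilde T : C^*(G) \to \mbb{B}(\ell^2(\mc{C}))$ carrying the full-bisection generator associated to $s$ to $T_s$. Its image is then exactly $C^*(\{T_s : s \in \overline{\mc{I}}_l\}) = C^*_r(\mc{C})$, so surjectivity onto $C^*_r(\mc{C})$ is immediate; the content is to see that $\tilde T$ descends to $C^*_r(G)$.

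The heart of the argument is to identify $\tilde T$ with a direct sum of regular representations. For $c \in \mc{C}$ let $\chi_c \in \Omega$ be the principal character of \lemref{semicharactersDenseLem}, and let $\mc{O}_c := \{s(c) : s \in \overline{\mc{I}}_l,\ c \in \dom{s}\} \subseteq \mc{C}$ be its orbit under the partial bijections. I would verify that the target map $[s,\chi_c] \mapsto s(c)$ is a bijection $G_{\chi_c} \xrightarrow{\cong} \mc{O}_c$; well-definedness and injectivity here rely on the germ equivalence relation defining $\sim$ in \defref{LCSCGroupoidDef}, together with the fact that $\overline{\mc{I}}_l$ carries principal characters to principal characters, $s \cdot \chi_c = \chi_{s(c)}$. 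Under the induced unitary $\ell^2(G_{\chi_c}) \cong \ell^2(\mc{O}_c)$ the regular representation $\pi_{\chi_c}$ becomes the restriction of $T$ to the invariant subspace $\ell^2(\mc{O}_c) \subseteq \ell^2(\mc{C})$. Since the orbits $\{\mc{O}_c\}$ partition $\mc{C}$, choosing one representative per orbit gives $\ell^2(\mc{C}) = \bigoplus \ell^2(\mc{O}_c)$ and hence $\tilde T \cong \bigoplus \pi_{\chi_c}$. In particular $\tilde T$ factors through $C^*_r(G)$, so $\Theta$ is well defined, surjective, and satisfies $\norm{\Theta(g)} = \sup_{\text{orbit reps}} \norm{\pi_{\chi_c}(g)} \le \norm{g}_r$.

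It then remains to prove $\Theta$ is isometric, i.e. $\sup_{c \in \mc{C}}\norm{\pi_{\chi_c}(g)} = \norm{g}_r$ for $g \in C_c(G)$. Since unitarily equivalent representations have equal norm, the left supremum is over all principal characters, which form a dense subset of $\Omega$ by \lemref{semicharactersDenseLem}. For an \'etale groupoid the map $\chi \mapsto \norm{\pi_\chi(g)}$ is lower semicontinuous, so for each $\epsilon > 0$ the set $\{\chi : \norm{\pi_\chi(g)} > \norm{g}_r - \epsilon\}$ is open and nonempty and therefore meets the dense set of principal characters; letting $\epsilon \to 0$ gives the reverse inequality, whence $\Theta$ is an isometric isomorphism. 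I expect the main obstacle to be the bookkeeping in the third paragraph: establishing the bijection $G_{\chi_c} \cong \mc{O}_c$ compatibly with the germ relation and checking that $\pi_{\chi_c}$ is precisely the restricted left regular representation. By contrast, the covariance verification is the routine universal-property step, and the final inequality is the standard lower-semicontinuity argument, both of which are underwritten by the Hausdorff hypothesis through the clean description of $C_c(G)$ and of its regular representations.
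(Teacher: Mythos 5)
The paper offers no proof of this statement---it is imported verbatim from Spielberg \cite[Theorem~11.6]{SpielbergSmallCats}---so there is nothing internal to compare your argument against. Your outline is the standard argument (essentially the one in the cited sources): decompose $\ell^2(\mc{C})$ into orbit subspaces, identify each source fibre over a principal character with the orbit of $c$ via $[s,\chi_c]\mapsto s(c)$ so that $\pi_{\chi_c}$ becomes the restricted left regular representation (the injectivity of this map is witnessed by the idempotent $s^{-1}s\,t^{-1}t\,cc^{-1}$ together with the right-module property of \lemref{lem:preSecondEqualityLem}), and then upgrade $\sup_c\norm{\pi_{\chi_c}(g)}$ to $\norm{g}_r$ using density of the principal characters (\lemref{semicharactersDenseLem}) and lower semicontinuity of $\chi\mapsto\norm{\pi_\chi(g)}$; this is sound.
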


For the remainder of the section we let $\msc{G} := \overline{\mc{I}_l} \ltimes \Omega$.
We  need to understand the  isotropy in $\msc{G}$ next.  Recall that $\mc{C}^* =\mc{C} \cap \mc{C}\inv$ denotes the groupoid of invertible elements in $\mc{C}$ and note that $\msc{G}^{(0)} = (\overline{\mc{I}}_l \ltimes \Omega)^{(0)} = \Omega$.  Our description of isotropy will occur in \lemref{lem:categoryIsotropy} below; we first need a few technical lemmas.
\begin{lem}\label{firstEqualityLem}
Suppose that $e \in \mc{C}^{(0)}$ and $u \in \mc{C}^*$ satisfy $u^{-1}u = e = uu^{-1}$. Then $[u,\chi_e] \in \msc{G}^{\chi_e}_{\chi_e}$.
\end{lem}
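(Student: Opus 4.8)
The plan is to check directly that $[u,\chi_e]$ is a well-defined element of $\msc{G}$ whose source and target both equal the unit $\chi_e$. Recall from \defref{LCSCGroupoidDef} that $\ssrc{[u,\chi_e]} = \chi_e$ holds automatically, while $\ttrg{[u,\chi_e]} = u\cdot\chi_e$, where $(u\cdot\chi_e)(f) = \chi_e(u^{-1}fu)$ for idempotents $f$. So after confirming that the pair $(u,\chi_e)$ lies in $\overline{\mc{I}}_l * \Omega$, the entire statement reduces to the single identity $u\cdot\chi_e = \chi_e$ in $\Omega$ (noting that $u\cdot\chi_e\in\Omega$ by \lemref{lem:Omegaisinvariant}).

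First I would dispose of well-definedness. Since $u \in \mc{C}^* \subseteq \mc{I}_l \subseteq \overline{\mc{I}}_l$, the pair $(u,\chi_e)$ belongs to $\overline{\mc{I}}_l * \Omega$ precisely when $\chi_e(u^{-1}u) = 1$. By hypothesis $u^{-1}u = e$, and under the identification of idempotents with subsets of $\mc{C}$ both sides correspond to $e\mc{C}$. Using the description of $\chi_e$ from \lemref{semicharactersDenseLem}, namely $\chi_e(g) = 1$ exactly when $e \leq g$ (since $ee^{-1}=e$), we get $\chi_e(u^{-1}u) = \chi_e(e) = 1$.

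The heart of the matter is the identity $u\cdot\chi_e = \chi_e$, i.e. $\chi_e(u^{-1}fu) = \chi_e(f)$ for every idempotent $f$; it suffices to verify this for $f \in E$, since an element of $\Omega$ is determined by its values on $E$, and for $u\in\mc{I}_l$ and $f\in E$ the conjugate $u^{-1}fu$ again lies in $E$. Write $f$ as the identity on a subset $F \subseteq \mc{C}$. I would first observe that, because $u$ is invertible with $u^{-1}u = uu^{-1} = e$, left multiplication by $u$ is a bijection of $e\mc{C} = \{x : \ttrg{x} = e\}$ onto itself: injectivity is left cancellation, and surjectivity uses $u(u^{-1}z) = z$ for $\ttrg{z}=e$, giving $u(e\mc{C}) = u\mc{C} = e\mc{C}$. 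Next, unwinding the composition of partial bijections shows that $u^{-1}fu$ is the identity on the subset $\{x \in e\mc{C} : ux \in F\}$. Therefore $\chi_e(u^{-1}fu) = 1$ iff $e\mc{C} \subseteq \{x \in e\mc{C} : ux \in F\}$ iff $u(e\mc{C}) \subseteq F$; and since $u(e\mc{C}) = e\mc{C}$, this is equivalent to $e\mc{C} \subseteq F$, i.e. to $\chi_e(f) = 1$. Hence $u\cdot\chi_e$ and $\chi_e$ agree on all idempotents, so $u\cdot\chi_e = \chi_e$ and $[u,\chi_e] \in \msc{G}^{\chi_e}_{\chi_e}$.

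I expect the only genuine obstacle to be bookkeeping: correctly identifying the conjugate $u^{-1}fu$ as a partial bijection while keeping the composition order straight, together with the collapse $u(e\mc{C}) = e\mc{C}$, which is exactly what forces the two membership conditions to coincide. Everything else is routine once the principal-character description $\chi_e(g) = 1 \Leftrightarrow e \leq g$ is in hand.
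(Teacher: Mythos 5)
Your proposal is correct, and its overall strategy coincides with the paper's: after noting that $\ssrc{[u,\chi_e]}=\chi_e$ automatically, everything reduces to showing $u\cdot\chi_e=\chi_e$, i.e.\ $\chi_e(u^{-1}fu)=\chi_e(f)$ for every idempotent $f\in E$, which by the description of principal characters is the equivalence $e\leq f \iff e\leq u^{-1}fu$. The only real difference is how that equivalence is verified. The paper does it by pure inverse-semigroup algebra from the relations $u^{-1}u=e=uu^{-1}$ (e.g.\ $ef=e$ gives $e(u^{-1}fu)=u^{-1}(uu^{-1}f)u=u^{-1}u=e$, and conversely by conjugating back), which is short and works without ever invoking the concrete realization of $\mc{I}_l$. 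You instead identify idempotents with subsets of $\mc{C}$, compute that $u^{-1}fu$ is the identity on $\{x\in e\mc{C} : ux\in F\}$, and use that left multiplication by $u$ permutes $e\mc{C}$; this is more bookkeeping but makes the mechanism transparent and also records the well-definedness check $\chi_e(u^{-1}u)=1$, which the paper leaves implicit. Both arguments are complete; the algebraic one is slightly more economical, the concrete one slightly more illuminating.
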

\begin{proof}
	Suppose that $f \in E(\mc{I}_l)$. Then 
	\[ \chi_e(f) = \begin{cases} 
     1 & \text{ if } e \leq f \\
      0 & \text{otherwise,} 
   \end{cases}
\]
so we need to show that $e \leq f \iff e \leq u^{-1}fu$. If $e \leq f$ i.e. $ef = e$ then $uu^{-1}f = uu^{-1} \implies u^{-1}uu^{-1}fu = u^{-1}uu^{-1}u = u^{-1}u = e$ that is $e (u^{-1}f u) = e$. Conversely if $e (u^{-1}f u) = e$ then $(u^{-1}u)(u^{-1}fu) = uu^{-1}$ so conjugating by $u$ we get $(uu^{-1})(uu^{-1})f(uu^{-1}) = uu^{-1}$ which is equivalent to $ef = e$ since $e = uu^{-1} = u^{-1}u$ and idempotents commute.
\end{proof}

\begin{lem}\label{lem:preSecondEqualityLem}
If $s \in \mc{I}_l$, $p,q \in \mc{C}$, and $p \in \dom{s}$, then $s(pq) = s(p)q$.
\end{lem}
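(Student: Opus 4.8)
The plan is to use the explicit description of elements of $\mc{I}_l$ as products of left-multiplication maps and their inverses, and to argue by induction on the number $n$ of such factors in a representation of $s$. It is convenient to strengthen the inductive statement slightly: for composable $p,q \in \mc{C}$ with $p \in \dom{s}$, I will show not only that $s(pq) = s(p)q$, but also that $pq \in \dom{s}$ (so the left-hand side is defined) and that $\ssrc{s(p)} = \ssrc{p}$ (so that $s(p)q$ is composable at every stage). This auxiliary source-preservation clause is exactly the bookkeeping needed to keep all intermediate right-multiplications by $q$ meaningful.

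First I would settle the two base cases. For $s = c$ a left-multiplication generator, $\dom{c} = \ssrc{c}\mc{C}$, so $p \in \dom{c}$ means $\ttrg{p} = \ssrc{c}$; since $pq$ is composable it has $\ttrg{pq} = \ttrg{p} = \ssrc{c}$, whence $pq \in \dom{c}$, and associativity in $\mc{C}$ gives $c(pq) = (cp)q = c(p)q$, with $\ssrc{cp} = \ssrc{p}$. For $s = c\inv$ the inverse map, $\dom{c\inv} = c\mc{C}$, and if $p \in c\mc{C}$ then left cancellation produces a unique $x = c\inv(p)$ with $p = cx$; associativity rewrites $pq = c(xq)$ with $xq$ composable, so $pq \in c\mc{C} = \dom{c\inv}$ and $c\inv(pq) = xq = c\inv(p)q$, again with $\ssrc{c\inv(p)} = \ssrc{x} = \ssrc{p}$.

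For the inductive step I would write $s = t \circ r$ as a composition of partial maps, where $t$ is a single generator or inverse generator handled above and $r$ is a strictly shorter product. If $p \in \dom{s}$ then $p \in \dom{r}$ and $r(p) \in \dom{t}$. The inductive hypothesis applied to $r$ gives $pq \in \dom{r}$ and $r(pq) = r(p)q$, and the source-preservation clause guarantees $\ssrc{r(p)} = \ssrc{p}$, so $r(p)q$ is composable. Then the base case applied to $t$ at the element $r(p) \in \dom{t}$ yields $r(p)q \in \dom{t}$ and $t(r(p)q) = t(r(p))q$. Combining these, $r(pq) = r(p)q \in \dom{t}$, so $pq \in \dom{s}$, and
\[
s(pq) = t(r(pq)) = t(r(p)q) = t(r(p))q = s(p)q,
\]
while $\ssrc{s(p)} = \ssrc{t(r(p))} = \ssrc{r(p)} = \ssrc{p}$, closing the induction.

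I expect the only genuine subtlety to be the composability bookkeeping: one must track that each intermediate image remains right-composable with $q$, which is precisely what the clause $\ssrc{s(p)} = \ssrc{p}$ records, and one must keep the function-composition order of the zigzag factors straight. Beyond this, everything reduces to associativity of composition in $\mc{C}$ together with left cancellation, the latter being what makes each inverse map $c\inv$ a well-defined partial bijection in the first place.
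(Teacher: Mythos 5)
Your proof is correct and follows essentially the same route as the paper's: induction on the length of a zigzag representation of $s$, with the base case handled by associativity in $\mc{C}$ together with left cancellation (the paper's base case is a block $d^{-1}c$ rather than your single generators $c$ and $c^{-1}$, an immaterial difference). Your added bookkeeping --- that $pq\in\dom{s}$ follows automatically and that $\ssrc{s(p)}=\ssrc{p}$ keeps every intermediate right-multiplication by $q$ composable --- is a welcome refinement of details the paper leaves implicit.
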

\begin{proof}
	If $s=d^{-1}c$ for $d,c \in \mc{C}$ then $cp = dr$ for some $r \in \mc{C}$ hence $cpq = drq$ and it follows that $s(pq) = rq = s(p)q$. We proceed by induction on the word length of $s=d_k^{-1}c_k \cdots d_1^{-1}c_1$. The desired result holds for $k=1$; assume it holds for all $k \leq \ell$. Let $p,pq \in \dom{s}$ where $s = d_{\ell + 1}^{-1}c_{\ell + 1} \cdots d_1^{-1}c_1$. Then $d_1^{-1}c_1(p)$ and $d_1^{-1}c_1(pq) = d_1^{-1}c_1(p)q$ are in $\dom{d_{\ell + 1}^{-1}c_{\ell + 1} \cdots d_2^{-1}c_2}$. By induction we have $d_{\ell + 1}^{-1}c_{\ell + 1} \cdots d_1^{-1}c_1(pq) = d_{\ell + 1}^{-1}c_{\ell + 1} \cdots d_2^{-1}c_2(d_1^{-1}c_1(p)q) = d_{\ell + 1}^{-1}c_{\ell + 1} \cdots d_1^{-1}c_1(p)q$.
\end{proof}

\begin{lem}\label{secondEqualityLem}
	Suppose that $e \in \mc{C}^{(0)}$ and $s \in \mc{I}_l$  satisfy $s^{-1}s = e = ss^{-1}$. Then $s \in \mc{C}^*$.
\end{lem}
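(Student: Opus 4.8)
The plan is to prove that $s$ acts on its domain as left multiplication by the single element $u := s(e)$, and then to check that $u$ is an invertible morphism of $\mc{C}$, so that $s = u \in \mc{C}^*$.

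First I would translate the hypothesis into a statement about supports. Identifying idempotents of $\mc{I}_l$ with the subsets of $\mc{C}$ they fix, the equalities $s\inv s = e = ss\inv$ say exactly that $\dom{s} = \im{s} = e\mc{C} = \{x \in \mc{C} : \ttrg{x} = e\}$. In particular $e \in \dom{s}$, so $u := s(e)$ is a well-defined element of $\im{s} = e\mc{C}$, and therefore $\ttrg{u} = e$.

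The key step is then to apply \lemref{lem:preSecondEqualityLem} with $p = e$. For every $q \in \mc{C}$ with $\ttrg{q} = e$ we have $q = eq \in e\mc{C} = \dom{s}$, and the lemma gives $s(q) = s(eq) = s(e)\,q = uq$. For $uq$ to be defined for all such $q$ we must have $\ssrc{u} = e$, so $u \in \mc{C}^e_e$; moreover $s$ then coincides with left multiplication by $u$ on all of $\dom{s} = e\mc{C} = \ssrc{u}\mc{C}$, that is, $s = u$ in $\mc{I}_l$. I expect this identification to be the main point of the proof: it is where the zigzag $s$ collapses to an honest morphism, and the step that most requires careful source/target bookkeeping.

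It remains to see that $u$ is invertible in $\mc{C}$. Since $e \in \im{s}$ and $s$ is a bijection of $e\mc{C}$ onto itself, there is $w := s\inv(e) \in \dom{s} = e\mc{C}$ with $uw = s(w) = e$; here $\ttrg{w} = e$ because $w \in e\mc{C}$, while comparing sources in $uw = e$ gives $\ssrc{w} = e$, so $w \in \mc{C}^e_e$ as well. To upgrade this right inverse to a two-sided one I would invoke left cancellation: from $u(wu) = (uw)u = eu = u = ue$ and left cancellation of $u$ in $u(wu) = u\,e$ we obtain $wu = e$. Hence $uw = wu = e = \id_e$, so $u$ is invertible in $\mc{C}$, i.e. $u \in \mc{C}^*$, and consequently $s = u \in \mc{C}^*$, as desired.
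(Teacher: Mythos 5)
Your proof is correct and follows essentially the same route as the paper: both apply \lemref{lem:preSecondEqualityLem} with $p = e$ to identify $s$ with left multiplication by $u = s(e)$. The only (minor) divergence is in the final step, where the paper runs the same argument symmetrically on $s^{-1}$ to conclude $s^{-1}(e) = s(e)^{-1}$, whereas you exhibit the right inverse $w = s^{-1}(e)$ and upgrade it to a two-sided inverse by left cancellation; both work.
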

\begin{proof}
	Let $p \in e\mc{C}e$. Then, using \lemref{lem:preSecondEqualityLem}, we have $s(p) = s(ep) = s(e)p$, so $s$ is left multiplication by $s(e)$.  Similarly $s^{-1}$ is left multiplication by $s^{-1}(e)$; hence $s^{-1}(e) = s(e)^{-1}$ and $s \in \mc{C}^*$.
\end{proof}

We are now ready to describe the isotropy at the dense subset of principal characters  in $\msc{G}^{(0)}$.

\begin{lem}\label{lem:categoryIsotropy}
For each $c \in \mc{C}$, the isotropy at $\chi_c$ is \[\msc{G}^{\chi_c}_{\chi_c} = \{[cuc^{-1}, \chi_c]\; | \; u \in \ssrc{c}\mc{C}^*\ssrc{c} \}.\]	
\end{lem}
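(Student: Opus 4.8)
The plan is to compute the isotropy group $\msc{G}^{\chi_c}_{\chi_c}$ by conjugating it onto the isotropy group at the principal character $\chi_e$ of the \emph{object} $e := \ssrc{c}$, where the structure is more transparent. First I would check that $\gamma := [c,\chi_e]$ is a well-defined arrow of $\msc{G}$ with $\ssrc{\gamma}=\chi_e$ and $\ttrg{\gamma}=c\cdot\chi_e=\chi_c$: the identity $c^{-1}c=e$ gives $\chi_e(c^{-1}c)=1$, so $\gamma$ makes sense, and feeding the definition $(c\cdot\chi_e)(f)=\chi_e(c^{-1}fc)$ together with $c^{-1}c=e$ shows $(c\cdot\chi_e)(f)=1\iff cc^{-1}\le f$, i.e. $c\cdot\chi_e=\chi_c$. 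Conjugation by $\gamma$ is then a group isomorphism $\msc{G}^{\chi_e}_{\chi_e}\to\msc{G}^{\chi_c}_{\chi_c}$, and the groupoid multiplication rule gives $\gamma[u,\chi_e]\gamma^{-1}=[cuc^{-1},\chi_c]$. Thus the theorem reduces to proving $\msc{G}^{\chi_e}_{\chi_e}=\{[u,\chi_e] : u\in e\mc{C}^*e\}$, where $e\mc{C}^*e=\ssrc{c}\mc{C}^*\ssrc{c}$.

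The inclusion $\supseteq$ is immediate from \lemref{firstEqualityLem}. For $\subseteq$, take $[s,\chi_e]\in\msc{G}^{\chi_e}_{\chi_e}$ with $s\in\overline{\mc{I}}_l$ and first reduce to a representative in $\mc{I}_l$. Writing $s=s_0 f$ with $s_0\in\mc{I}_l$, $f\in\overline{E}$, $f\le s_0^{-1}s_0$, validity of the pair forces $\chi_e(s^{-1}s)=\chi_e(f)=1$, so taking the idempotent $g=f$ shows $[s,\chi_e]=[s_0,\chi_e]$, and I may assume $s=s_0\in\mc{I}_l$. From $\chi_e(s_0^{-1}s_0)=1$ I get $e\in e\mc{C}\subseteq\dom{s_0}$, so \lemref{lem:preSecondEqualityLem} shows that $s_0$ restricts on $e\mc{C}$ to left multiplication by $u:=s_0(e)$, with $\ssrc{u}=e$; hence $s_0 e=u$ as elements of $\mc{I}_l$, giving $[s_0,\chi_e]=[u,\chi_e]$ with $u^{-1}u=e$.

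It remains to show $u\in\mc{C}^*$ with $uu^{-1}=e$, which is the crux. Since $[u,\chi_e]$ lies in the isotropy, its target $u\cdot\chi_e=\chi_e$ lies in $D_{uu^{-1}}$, so $\chi_e(uu^{-1})=1$, i.e. $e\le uu^{-1}$, equivalently $e\mc{C}\subseteq u\mc{C}$. In particular $e\in u\mc{C}$, which forces $\ttrg{u}=e$ and hence $u\mc{C}\subseteq e\mc{C}$; therefore $uu^{-1}=u\mc{C}=e\mc{C}=e$. Now $u\in\mc{I}_l$ satisfies $u^{-1}u=e=uu^{-1}$, so \lemref{secondEqualityLem} gives $u\in\mc{C}^*$, and $\ssrc{u}=\ttrg{u}=e$ places $u\in e\mc{C}^*e$. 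Conjugating back by $\gamma$ then yields $\msc{G}^{\chi_c}_{\chi_c}=\{[cuc^{-1},\chi_c] : u\in\ssrc{c}\mc{C}^*\ssrc{c}\}$.

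The main obstacle I anticipate is the $\subseteq$ direction: cleanly reducing a general representative from the Boolean closure $\overline{\mc{I}}_l$ to one in $\mc{I}_l$, and then extracting invertibility of $u$ purely from the source/target constraints of membership in the isotropy group together with left cancellation. The delicate bookkeeping is the identification of the idempotents $s^{-1}s$ and $ss^{-1}$ with the subsets $\dom{s}$ and $u\mc{C}$, and the translation of $\chi_e(uu^{-1})=1$ into $e\mc{C}\subseteq u\mc{C}$, which is exactly what feeds \lemref{secondEqualityLem}.
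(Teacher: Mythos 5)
Your proof is correct and follows essentially the same route as the paper's: reduce to the isotropy at $\chi_{\ssrc{c}}$ by conjugating with $[c,\chi_{\ssrc{c}}]$, obtain the inclusion $\supseteq$ from \lemref{firstEqualityLem}, and get invertibility of the representative from \lemref{secondEqualityLem}. The only difference is that you spell out the forward inclusion (reducing a representative in $\overline{\mc{I}}_l$ to a left-multiplication map via \lemref{lem:preSecondEqualityLem} and extracting $uu^{-1}=\ssrc{c}$ from the isotropy condition), a step the paper leaves implicit when it asserts $\msc{G}^{\chi_e}_{\chi_e}=\{[u,\chi_e]\mid u\in\mc{I}_l,\ u^{-1}u=e=uu^{-1}\}$.
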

\begin{proof}
Let $e = \ssrc{c}$. Then
	\begin{align*}
\msc{G}_{\chi_e}^{\chi_e} &= \{ [u,\chi_e] \; | \; u \in \mc{I}_l, u^{-1}u = e = uu^{-1} \} \\
& = \{ [u,\chi_e] \; | \; u \in \mc{C}^*, u^{-1}u = e = uu^{-1} \}, 
\end{align*}
	where the first equality is from \lemref{firstEqualityLem} and the second equality holds by  \lemref{secondEqualityLem}. We then have $\msc{G}_{\chi_c}^{\chi_c} = [c,\chi_e] \msc{G}_{\chi_e}^{\chi_e} [c,\chi_e]^{-1} = \{[cuc^{-1}, \chi_c]\; | \; u \in \ssrc{c}\mc{C}^*\ssrc{c} \}$.
\end{proof}

We use the above to show that the subgroupoid $\mc{C}^* \ltimes \Omega$ constrains the  isotropy in $\msc{G}$.

\begin{prop}\label{pro:categoryreltopp}
	The groupoid $\msc{G} = \overline{\mc{I}}_l \ltimes \Omega$ is  topologically principal relative to $\mc{C}^* \ltimes \Omega$.
\end{prop}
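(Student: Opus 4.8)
The plan is to leverage the two structural facts already in hand: by \lemref{semicharactersDenseLem} the principal characters $\{\chi_c : c \in \mc{C}\}$ form a dense subset of $\Omega = \msc{G}^{(0)}$, and by \lemref{lem:categoryIsotropy} their isotropy groups are computed explicitly. Since I only need $X_{\mc{C}^* \ltimes \Omega}$ to be dense, it suffices to show that every principal character lies in it, that is, that $\chi_c \in X_{\mc{C}^* \ltimes \Omega}$ for each $c \in \mc{C}$. So I fix $c \in \mc{C}$, set $e := \ssrc{c}$, and must exhibit an open bisection $\Gamma$ with $\chi_c \in \Gamma^{-1}\Gamma$ and $\Gamma\,\msc{G}^{\chi_c}_{\chi_c}\,\Gamma^{-1} \subseteq \mc{C}^* \ltimes \Omega$.

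The natural candidate is the bisection $\Gamma := [c^{-1}, D_{cc^{-1}}]$ through the element $\gamma := [c^{-1}, \chi_c]$, which by \defref{LCSCGroupoidDef} is an open bisection of $\msc{G}$ with $\ssrc{\gamma} = \chi_c$ and $\ttrg{\gamma} = c^{-1}\cdot \chi_c = \chi_e$. In particular $\chi_c \in \ssrc{\Gamma} = \Gamma^{-1}\Gamma$, so the first condition in \eqref{eqn:definitionX_H} holds at once. Because $\Gamma$ is a bisection and $\chi_c$ is a unit of its source, and since $\msc{G}^{\chi_c}_{\chi_c}$ has unit space $\{\chi_c\} \subseteq \ssrc{\Gamma}$, the conjugation of the isotropy by $\Gamma$ reduces to honest conjugation by the single element $\gamma$ in the sense of \lemref{ConjByGammaGroupoidIsoLem}; that is, $\Gamma \alpha \Gamma^{-1} = \gamma \alpha \gamma^{-1}$ for every $\alpha \in \msc{G}^{\chi_c}_{\chi_c}$, with $\gamma^{-1} = [c, \chi_e]$.

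What remains is to check the containment, and this is where the real computation lies. By \lemref{lem:categoryIsotropy} a generic element of $\msc{G}^{\chi_c}_{\chi_c}$ has the form $\alpha = [cuc^{-1}, \chi_c]$ with $u \in \mc{C}^*$ satisfying $u^{-1}u = e = uu^{-1}$. Applying the groupoid multiplication rule of \defref{LCSCGroupoidDef} twice and simplifying with the inverse-semigroup identities $c^{-1}c = e$ and $ue = u = eu$, I expect
\[
\gamma \alpha \gamma^{-1} = [c^{-1}, \chi_c]\,[cuc^{-1}, \chi_c]\,[c, \chi_e] = [\,c^{-1}\,cuc^{-1}\,c,\ \chi_e\,] = [u, \chi_e],
\]
which lies in $\mc{C}^* \ltimes \Omega$ precisely because $u \in \mc{C}^*$. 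Since $c$ was arbitrary, this puts every $\chi_c$ into $X_{\mc{C}^* \ltimes \Omega}$ and density follows. The main obstacle I anticipate is verifying that the intermediate targets and sources match so that the two composites are actually defined — concretely, that $cu \cdot \chi_e = c \cdot \chi_e = \chi_c$. This last equality is the one genuinely new point: unwinding $(cu\cdot\chi_e)(f) = \chi_e(u^{-1}c^{-1}fcu)$ and using $ueu^{-1} = e$ shows the condition $e \leq u^{-1}c^{-1}fcu$ is equivalent to $e \leq c^{-1}fc$, i.e.\ to $(c\cdot\chi_e)(f) = 1$, so the two characters coincide. Once this is established the composites are legitimate, the computation collapses to $[u,\chi_e] \in \mc{C}^* \ltimes \Omega$, and the proposition is proved.
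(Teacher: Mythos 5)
Your proof is correct and takes essentially the same route as the paper's: it conjugates the isotropy at $\chi_c$ by the inverse of the open bisection $[c,\Omega(c^{-1}c)]$ into $\msc{G}^{\chi_{\ssrc{c}}}_{\chi_{\ssrc{c}}} \subseteq \mc{C}^* \ltimes \Omega$ and then invokes density of the principal characters from \lemref{semicharactersDenseLem}. The only difference is that you carry out explicitly the conjugation computation that the paper delegates to the identity $\msc{G}^{\chi_c}_{\chi_c} = [c,\chi_e]\,\msc{G}^{\chi_e}_{\chi_e}\,[c,\chi_e]^{-1}$ already established in \lemref{lem:categoryIsotropy}.
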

\begin{proof}
	Given $c \in \mc{C}$ the subset $[c,\Omega(c^{-1}c)]$ of $\msc{G}$ is an open bisection with $\ssrc{[c,\Omega(c^{-1}c)]} = \Omega(c^{-1}c)$ and $\ttrg{[c,\Omega(c^{-1}c)]} = \Omega(cc^{-1})$. If we let $\Gamma := [c,\Omega(c^{-1}c)]^{-1}$, then $\Gamma \msc{G}^{\chi_c}_{\chi_c} \Gamma^{-1} = \msc{G}^{\chi_{\ssrc{c}}}_{\chi_{\ssrc{c}}} \subseteq \mc{C}^* \ltimes \Omega$; that is, the isotropy at each element of  $X:=\{ \chi_c \; | \; c \in \mc{C} \}$  may be conjugated into $\mc{C}^* \ltimes \Omega$ by an open bisection. Since $X$ is dense by \lemref{semicharactersDenseLem}, $\msc{G}$ is topologically principal relative to $\mc{C}^* \ltimes \Omega$.
\end{proof}

An application of  \thmref{thm:groupoidUniquenessThm} now yields a faithfulness criterion
for representations of $C^*_r(\mc{C})$.

\begin{thm}\label{lcscUniquenessThm}
Let $\mc{C}$ be a left cancellative small category such that $\msc{G}:= \overline{\mc{I}_l} \ltimes \Omega$ is  Hausdorff.
A representation  of  $C_r^*(\mc{C}) \cong C^*_r(\overline{\mc{I}_l} \ltimes \Omega) $ 
is faithful if and only if its restriction  to the C*-subalgebra $C^*_r(\mc{C}^* \ltimes \Omega)$ is faithful. 
\end{thm}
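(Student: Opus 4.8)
The plan is to deduce \thmref{lcscUniquenessThm} as a direct application of the main result, \thmref{groupoidUniquenessThm}, once the hypotheses have been verified. First I would observe that the theorem's standing assumptions already guarantee that $\msc{G} = \overline{\mc{I}_l} \ltimes \Omega$ is a second-countable Hausdorff \'etale groupoid, so the structural requirements of \thmref{groupoidUniquenessThm} are met. The only remaining ingredient is to confirm that $\msc{G}$ is topologically principal relative to the singleton family $\{\mc{C}^* \ltimes \Omega\}$, and this is precisely the content of \proref{pro:categoryreltopp}, which I would simply invoke. One should also note that $\mc{C}^* \ltimes \Omega$ is an open (indeed clopen) subgroupoid of $\msc{G}$, as is needed for the statement to make sense; this follows because the basic open sets of \defref{LCSCGroupoidDef} of the form $[u, U]$ with $u \in \mc{C}^*$ cover it.

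With these observations in place, the proof is almost immediate. I would apply \thmref{groupoidUniquenessThm} with $\mathscr{H} = \{\mc{C}^* \ltimes \Omega\}$ to conclude that a representation $\rho$ of $C^*_r(\msc{G})$ is faithful if and only if its restriction to $C^*_r(\mc{C}^* \ltimes \Omega)$ is faithful. Finally, I would transport this statement across the isomorphism $C^*_r(\mc{C}) \cong C^*_r(\overline{\mc{I}_l} \ltimes \Omega)$ supplied by Spielberg's \cite[Theorem~11.6]{SpielbergSmallCats}, under which $C^*_r(\mc{C}^* \ltimes \Omega)$ is carried to the claimed subalgebra of $C^*_r(\mc{C})$. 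This yields exactly the faithfulness criterion asserted.

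I do not expect any genuine obstacle in this argument, since all the substantive work has been carried out in the preceding lemmas and propositions: the identification of the isotropy at principal characters (\lemref{lem:categoryIsotropy}), the relative topological principality (\proref{pro:categoryreltopp}), and the groupoid model (Spielberg's theorem). The only point requiring a modicum of care is the bookkeeping of the identification between the subalgebra $C^*_r(\mc{C}^* \ltimes \Omega) \subseteq C^*_r(\msc{G})$ and its image inside $C^*_r(\mc{C})$ under the isomorphism; one wants to be sure that the embedding $C^*_r(H) \hookrightarrow C^*_r(G)$ referenced in \thmref{groupoidUniquenessThm} corresponds correctly. Since $\mc{C}^* \ltimes \Omega$ is an open subgroupoid, the canonical inclusion $C_c(\mc{C}^* \ltimes \Omega) \hookrightarrow C_c(\msc{G})$ extends to the desired embedding of reduced C*-algebras, and no further subtlety arises.
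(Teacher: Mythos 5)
Your argument is correct and is exactly the paper's route: the paper states \thmref{lcscUniquenessThm} as an immediate application of \thmref{groupoidUniquenessThm}, with the relative topological principality supplied by \proref{pro:categoryreltopp} and the groupoid model by Spielberg's \cite[Theorem~11.6]{SpielbergSmallCats}. Your added remarks on openness of $\mc{C}^* \ltimes \Omega$ and on the identification of the subalgebra under the isomorphism are sound and only make explicit what the paper leaves implicit.
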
 
Under an amenability assumption this can be turned into a uniqueness result.
\begin{cor} Let $\mc{C}$ be a left cancellative small category such that $\msc{G}:= \overline{\mc{I}_l} \ltimes \Omega$ is Hausdorff.
If the canonical map $C^*(\overline{\mc{I}_l} \ltimes \Omega) \rightarrow C^*_r(\overline{\mc{I}_l} \ltimes \Omega)$ is an isomorphism, e.g. if $\overline{\mc{I}_l} \ltimes \Omega$ is amenable, then 
the C*-algebra generated by a representation of $C^*(\overline{\mc{I}_l} \ltimes \Omega)$ that is faithful on  $C^*(\mc{C}^* \ltimes \Omega)$ is canonically unique. 
\end{cor}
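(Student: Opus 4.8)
The plan is to show that the faithfulness hypothesis on the subalgebra already forces the representation to be injective on all of $C^*(\overline{\mc{I}_l} \ltimes \Omega)$; the stated canonical uniqueness is then a purely formal consequence. Throughout I write $G := \msc{G} = \overline{\mc{I}_l} \ltimes \Omega$ and $H := \mc{C}^* \ltimes \Omega$, and I let $\lambda_G \colon C^*(G) \to C^*_r(G)$ and $\lambda_H \colon C^*(H) \to C^*_r(H)$ be the canonical quotient maps, $\iota \colon C^*(H) \to C^*(G)$ the map induced by the inclusion $C_c(H) \hookrightarrow C_c(G)$, and $j \colon C^*_r(H) \to C^*_r(G)$ the corresponding injective embedding of reduced algebras used throughout the paper. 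These fit into the identity $\lambda_G \circ \iota = j \circ \lambda_H$, since all four maps agree with the inclusion on the dense subalgebra $C_c(H)$ and are continuous. By hypothesis $\lambda_G$ is an isomorphism.

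First I would fix a representation $\pi \colon C^*(G) \to \mbb{B}(\mc{H})$ whose restriction to $C^*(H)$, namely $\pi \circ \iota$, is faithful, and set $\pi' := \pi \circ \lambda_G^{-1} \colon C^*_r(G) \to \mbb{B}(\mc{H})$, which is a genuine representation of the reduced algebra because $\lambda_G$ is invertible. The key step is to transfer faithfulness from the full picture to the reduced one: I claim $\pi'$ is faithful on $j(C^*_r(H)) = C^*_r(\mc{C}^* \ltimes \Omega) \subseteq C^*_r(G)$. Indeed $\pi \circ \iota = \pi' \circ \lambda_G \circ \iota = \pi' \circ j \circ \lambda_H$ is injective by assumption; since $\lambda_H$ is surjective, any $y = j(z) \in j(C^*_r(H))$ with $\pi'(y) = 0$ satisfies $z = \lambda_H(w)$ for some $w \in C^*(H)$, whence $\pi' \circ j \circ \lambda_H(w) = 0$ forces $w = 0$ and so $y = 0$. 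As a byproduct this shows $\lambda_H$ is injective, i.e. $C^*(H) = C^*_r(H)$, so no separate appeal to amenability of the subgroupoid $\mc{C}^* \ltimes \Omega$ is needed.

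With this in hand the heavy lifting is done by the machinery already established: under the identification $C^*_r(G) \cong C^*_r(\mc{C})$, the representation $\pi'$ is faithful on $C^*_r(\mc{C}^* \ltimes \Omega)$, so \thmref{lcscUniquenessThm} applies and $\pi'$ is faithful on all of $C^*_r(G)$. Consequently $\pi = \pi' \circ \lambda_G$ is injective on $C^*(G)$, being a composite of an isomorphism with an injective $*$-homomorphism. Thus every representation of $C^*(\overline{\mc{I}_l} \ltimes \Omega)$ that is faithful on $C^*(\mc{C}^* \ltimes \Omega)$ is automatically faithful on the whole algebra.

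Finally, to extract the stated canonical uniqueness, I would take two such representations $\pi_1, \pi_2$; each is an isomorphism onto its image, so $\pi_2 \circ \pi_1^{-1}$ is a $*$-isomorphism $\pi_1(C^*(G)) \to \pi_2(C^*(G))$ carrying $\pi_1(x)$ to $\pi_2(x)$ for every $x$, in particular intertwining the canonical generating families (the images of the $T_c$). I expect the only genuinely delicate point to be the bookkeeping in the second paragraph, namely verifying that faithfulness of $\pi$ on the full subalgebra $C^*(H)$ really descends to faithfulness of $\pi'$ on the reduced subalgebra $C^*_r(H)$ through the square $\lambda_G \circ \iota = j \circ \lambda_H$; this is where the full/reduced distinction for the subgroupoid and the injectivity of $j$ are genuinely used, while everything else is formal.
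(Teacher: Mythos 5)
Your proposal is correct and follows essentially the same route as the paper, whose entire proof is the one-line observation that \thmref{lcscUniquenessThm} makes any such representation faithful; your second paragraph simply makes explicit the full-versus-reduced bookkeeping (the square $\lambda_G \circ \iota = j \circ \lambda_H$) that the paper leaves implicit after invoking the isomorphism $C^*(\msc{G}) \cong C^*_r(\msc{G})$.
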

\begin{proof}
    By \thmref{lcscUniquenessThm}, a representation of $C^*(\overline{\mc{I}_l} \ltimes \Omega)$ that is faithful on  $C^*(\mc{C}^* \ltimes \Omega)$ is faithful.
\end{proof}

\begin{rem}
	When the LCSC $\mathcal{C}$ is a monoid that embeds in a group $G$  the groupoid $\overline{\mc{I}_l} \ltimes \Omega$ is isomorphic to the partial transformation groupoid of a partial action of $G$ on $\Omega$, and  \thmref{lcscUniquenessThm} recovers \cite[Theorem~5.1]{LS}. Since  there are left cancellative monoids that do not embed in a group and for which the groupoid $\overline{\mc{I}_l} \ltimes \Omega$ is Hausdorff, our result extends  \cite[Theorem~5.1]{LS} non-trivially even for monoids. A few concrete examples are discussed in the next section. 
\end{rem}

\section{Examples from integer arithmetic}\label{sec:integerarith}
  The purpose of this final section is to give a concrete application of \thmref{lcscUniquenessThm} to a monoid that does not embed in a group, so that \cite[Theorem 5.1]{LS} does not apply, and for which our `ideal-detecting subalgebra'  is strictly smaller than the one obtained from the interior of the isotropy as in \cite{brown2015cartan}.
  The monoid arises in the study of phase transitions of C*-dynamical systems of arithmetic origin \cites{Laca-Schulz23}.

Fix $n\in \nx$. Then $\N^\times$ acts on $\Z / n \Z$ by right multiplication, $[k] \cdot a := [ka]$ for $a \in \nx$ where $[k]= k+ n\Z \in \Z / n \Z$  denotes the equivalence class of $k$ modulo $n$. We consider the semidirect product  $\nxxzn$, where the operation is defined by 
\[
(a,[k])\cdot (b,[l]) = (ab,[kb+l])\qquad a,b \in \nx, \ k,l \in \Z.
\]

\begin{prop}(cf. \cite[Proposition 7.2]{Laca-Schulz23})
\label{pro:examplesproperties}
	The monoid $\nxxzn$  is left cancellative 
	but   not right cancellative, hence not embeddable in a group. Moreover, 
 \begin{equation}\label{eqn:P2rightLCM}
 (a,[k]) (\nxxzn) \cap (b,[l])(\nxxzn) = (\operatorname{lcm}(a,b),[0])(\nxxzn),
 \end{equation}
 so $\nxxzn$ is also right 
		 LCM. The associated groupoid $\msc{G}$  is Hausdorff and second-countable.

\begin{proof}   If $(ab,[kb + l]) = (a,[k])(b,[l]) = (a,[k])(c,[m]) = (ac,[kc + m])$, then $b = c$ and $[kb+l] = [kc +m]$.
  This implies $ [kb] = [kc]$, and hence $[l] =[m]$  in $\Z / n\Z$, hence $\nxxzn$ is left cancellative.

	 In order to see that right cancellation fails, consider for example that $(1,[2])\neq (1,[1])$, but right-multiplication by $(n,[0])$ gives $(1,[2])(n,[0]) = (n,[2n+0])= (n,[0]) = (n,[n+0])=(1,[1])(n,[0])$.
  
It is straightforward to verify \eqref{eqn:P2rightLCM} using the composition rule,  see e.g. \cite{Laca-Schulz23}, proving the right LCM property. Notice that the principal right ideal  $(a,[k])(\nxxzn)$ does not depend on $[k]$.
		
 Since $\nxxzn$ is countable,  second countability of $\msc{G}$ follows from \lemref{semicharactersDenseLem} and its proof. The only thing left to show is that  $\msc{G}$  is Hausdorff. Recall from \cite[Definition~3.2.]{SpielbergSmallCats} that a left cancellative small category $\mc{C}$ is \textit{finitely aligned}  if for every $a,b \in \mc{C}$ there exists a finite set $F \subseteq \mc{C}$ such $a\mc{C} \cap b\mc{C} = \bigcup_{c \in F} c\mc{C}$. Right LCM monoids are obviously finitely aligned as small categories, with $F = \{ c\}$. By \cite[Corollary~4.2.]{LiGarsideCats}, to prove $\msc{G}$ is Hausdorff it suffices to show that for all $(a,[k]),(b,[l]) \in  \nxxzn$ the set 
  \[
  X:= \big\{(c,[m]) \in \nxxzn \; | \; (a,[k])(c,[m]) = (b,[l])(c,[m]) \big\} 
  \]
  equals $(d,[t])\nxxzn$ for some $(d,[t]) \in \nxxzn$. Let $k,l,m$ be positive integers smaller than $n$, let $a,b,c \in \nx$ be arbitrary, and suppose that $(a,[k])(c,[m]) = (b,[l])(c,[m])$. Then $ (ac,[kc +m]) = ( bc,[lc + m])$, so we have $a = b$ because $\nx$ is right cancellative. Moreover $[kc + m] = [lc +m] \iff [kc] = [lc]$ so the choice of $[m]$ is arbitrary and it follows that $X = \{ (c,[m]) \in P \; | \; [kc] = [lc] \}$. We claim that 
	\[
   X = \big\{ (c,[m]) \in \nxxzn \; | \; [kc] = [lc] \big\} = \left(\frac{\operatorname{lcm}(k-l, n)}{k-l},[0] \right) \nxxzn.
   \]
			First let
   $(\frac{\operatorname{lcm}(k-l, n)}{k-l} \cdot t, [p])$ be an element of the right hand side; then $$\left[k\cdot \frac{\operatorname{lcm}(k-l, n)}{k-l} \cdot t\right] - \left[l\cdot \frac{\operatorname{lcm}(k-l, n)}{k-l} \cdot t\right] = \left[\operatorname{lcm}(k-l, n) \cdot t\right] = [0].$$
			So $(\frac{\operatorname{lcm}(k-l, n)}{k-l} \cdot t, [p]) \in X$. Conversely suppose $(c,[m]) \in X$. Then $[kc] = [lc]$ implies $n$ divides $(k-l)c$ hence $\operatorname{lcm}(k-l,n)$ divides $(k-l)c$ and there is a $t$ such that $(k-l)c = \operatorname{lcm}(k-l,n)t$ hence $c = \frac{\operatorname{lcm}(k-l,n)}{k-l}t$. It follows that $(c,[m]) = (\frac{\operatorname{lcm}(k-l,n)}{k-l}, [0])([m],t)$ as desired. This completes the proof.
\end{proof}
\end{prop}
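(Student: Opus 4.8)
The plan is to treat the four assertions in turn, disposing of the algebraic properties by direct computation with the composition rule and reserving the real effort for the Hausdorffness of $\msc{G}$.

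First I would verify left cancellativity directly. Given $(a,[k])(b,[l]) = (a,[k])(c,[m])$, the composition rule yields $(ab,[kb+l]) = (ac,[kc+m])$; cancellativity of $\nx$ gives $b=c$, and then $[kb+l]=[kc+m]$ forces $[l]=[m]$. To see that right cancellation fails I would exhibit an explicit pair sharing a right multiple on which they agree, e.g. $(1,[1])\neq(1,[2])$ while right-multiplication by $(n,[0])$ sends both to $(n,[0])$, since $[n]=[2n]=[0]$ in $\Z/n\Z$. Non-embeddability into a group is then immediate, as every subsemigroup of a group is right cancellative.

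For the right LCM property and \eqref{eqn:P2rightLCM}, the key observation is that the principal right ideal $(a,[k])\nxxzn$ does not depend on the residue $[k]$: as $(b,[l])$ ranges over the monoid, the second coordinate $[kb+l]$ sweeps out all of $\Z/n\Z$, so $(a,[k])\nxxzn = \{(m,[j]) : a \mid m\}$. Intersecting two such sets reduces to intersecting the divisibility conditions $a\mid m$ and $b\mid m$, namely $\operatorname{lcm}(a,b)\mid m$, which establishes \eqref{eqn:P2rightLCM}; since the intersection is always a nonempty principal right ideal, the right LCM property follows. Second countability I would then read off from countability of $\nxxzn$: the inverse semigroup $\mc{I}_l$ and its Boolean closure $\overline{\mc{I}_l}$ are countable, and by \lemref{semicharactersDenseLem} the principal characters form a countable dense subset of $\Omega$, so the basis $\{[s,U]\}$ for $\msc{G}$ is generated by countably much data.

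The main obstacle is Hausdorffness. Here I would invoke the criterion of \cite[Corollary~4.2.]{LiGarsideCats}, applicable because a right LCM monoid is finitely aligned as a small category (take $F=\{c\}$): it suffices to show that for every pair $(a,[k]),(b,[l])$ the equalizer $X = \{(c,[m]) : (a,[k])(c,[m]) = (b,[l])(c,[m])\}$ is empty or a single principal right ideal. Expanding the compositions, equality forces $ac=bc$ and $[kc+m]=[lc+m]$; right cancellativity of $\nx$ collapses the first to $a=b$ (so $X=\emptyset$ when $a\neq b$), while the second reduces to the purely numerical condition $n\mid(k-l)c$, with $[m]$ unconstrained. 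The crux is to recognize this divisibility condition as a single principal right ideal: setting $g=\gcd(k-l,n)$ one checks $n\mid(k-l)c \iff \tfrac{n}{g}\mid c$, and since $\tfrac{n}{g}=\tfrac{\operatorname{lcm}(k-l,n)}{k-l}$ this identifies $X = \big(\tfrac{\operatorname{lcm}(k-l,n)}{k-l},[0]\big)\nxxzn$. I expect the only delicate points to be the bookkeeping with chosen representatives of residues and the degenerate case $k=l$, in which $X$ is all of $\nxxzn=(1,[0])\nxxzn$.
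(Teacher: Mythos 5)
Your proposal is correct and follows essentially the same route as the paper: direct computation for left cancellativity, the same counterexample $(1,[1])\neq(1,[2])$ killed by $(n,[0])$ for the failure of right cancellation, the observation that principal right ideals depend only on the $\nx$-coordinate for the LCM property, and the criterion of \cite[Corollary~4.2]{LiGarsideCats} reducing Hausdorffness to showing the equalizer set is a principal right ideal. Your identification of that ideal via $\gcd(k-l,n)$ is just a reformulation of the paper's $\operatorname{lcm}(k-l,n)/(k-l)$ computation, and your explicit treatment of the cases $a\neq b$ and $k=l$ is a minor (welcome) tightening rather than a different argument.
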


\thmref{lcscUniquenessThm} yields the following faithfulness and uniqueness result.

\begin{cor}\label{cor:critforsemid}
   A representation of $C^*(\nxxzn)$ is faithful if and only if its restriction to the crossed product 
    $C(\Omega) \rtimes (\{1\} \ltimes\Z/n\Z)$ is faithful.
\end{cor}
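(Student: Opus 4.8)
The plan is to apply \thmref{lcscUniquenessThm} to the left cancellative small category $\mc{C} = \nxxzn$, regarded as a one-object LCSC. By \proref{pro:examplesproperties} the monoid $\nxxzn$ is left cancellative and right LCM and the associated groupoid $\msc{G} = \overline{\mc{I}_l} \ltimes \Omega$ is Hausdorff and second-countable, so the hypotheses of \thmref{lcscUniquenessThm} are satisfied. The first step is to identify the groupoid $\mc{C}^*$ of invertibles: an element $(a,[k])$ is invertible in $\nxxzn$ exactly when $a$ is invertible in $\nx$, that is $a=1$, and every $(1,[k])$ is invertible with inverse $(1,[-k])$. Hence $\mc{C}^* = \{1\} \ltimes \Z/n\Z$, a finite (in particular amenable) group. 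With this in hand, \thmref{lcscUniquenessThm} already tells us that a representation of $C^*_r(\nxxzn) \cong C^*_r(\msc{G})$ is faithful if and only if its restriction to $C^*_r(\mc{C}^* \ltimes \Omega) = C^*_r\big((\{1\} \ltimes \Z/n\Z) \ltimes \Omega\big)$ is faithful.

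The second step is to recognize $C^*_r\big((\{1\} \ltimes \Z/n\Z) \ltimes \Omega\big)$ as the crossed product in the statement. Because every $u = (1,[k]) \in \{1\} \ltimes \Z/n\Z$ is invertible, its domain idempotent $u^{-1}u$ equals the identity of the semilattice $E$, so $D_{u^{-1}u} = \Omega$; thus each element of $\{1\} \ltimes \Z/n\Z$ acts as a \emph{global} homeomorphism of $\Omega$ and $(\{1\} \ltimes \Z/n\Z) \ltimes \Omega$ is the ordinary transformation groupoid of the global action of the finite group $\Z/n\Z$ on $\Omega$. Its reduced groupoid C*-algebra is therefore the reduced crossed product, and since $\Z/n\Z$ is finite, hence amenable, this coincides with the full crossed product $C(\Omega) \rtimes (\{1\} \ltimes \Z/n\Z)$.

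The third step is to pass from the reduced to the full algebra $C^*(\nxxzn)$ appearing in the statement. For this I would invoke the amenable refinement of \thmref{lcscUniquenessThm} (the corollary immediately following it): provided the canonical surjection $C^*(\msc{G}) \to C^*_r(\msc{G})$ is an isomorphism, the faithful-on-the-subalgebra criterion transfers verbatim to the full algebra, and all full and reduced algebras in sight collapse to $C^*_r(\nxxzn) \cong C^*_r(\msc{G})$. Granting amenability of $\msc{G}$, we get $C^*(\nxxzn) \cong C^*(\msc{G}) \cong C^*_r(\msc{G})$ together with $C^*\big((\{1\} \ltimes \Z/n\Z) \ltimes \Omega\big) = C(\Omega) \rtimes (\{1\} \ltimes \Z/n\Z)$, which yields exactly the asserted equivalence.

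The main obstacle I anticipate is establishing amenability of $\msc{G} = \overline{\mc{I}_l} \ltimes \Omega$: unlike the group-embeddable case $\nxxz \subset \qxxq$, the monoid $\nxxzn$ does not embed in a group, so one cannot simply quote amenability of a partial transformation groupoid of an amenable group. The strategy I would use is the degree cocycle $c \colon \msc{G} \to \Q_+^\times$ induced by the multiplicative homomorphism $(a,[k]) \mapsto a$ onto the group-embeddable category $\nx$; by \lemref{lem:categoryIsotropy} the (finite) isotropy groups have trivial multiplicative part and so land in $\ker c$. Since $\Q_+^\times$ is free abelian, hence amenable, amenability of $\msc{G}$ reduces via the groupoid-extension theorem to amenability of the kernel groupoid $c^{-1}(1)$, which carries finite isotropy $\Z/n\Z$ over an approximately finite orbit structure and is therefore amenable. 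Alternatively, one may simply cite the amenability established for this family of groupoids in \cite{Laca-Schulz23}.
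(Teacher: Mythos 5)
Your first two steps are exactly the paper's proof: identify $(\nxxzn)^* = \{1\} \ltimes \Z/n\Z$ and invoke \thmref{lcscUniquenessThm} via \proref{pro:examplesproperties} and \proref{pro:categoryreltopp}, so the core argument matches. The additional excursion into amenability of $\msc{G}$ in your final step is not needed here: the algebra in the statement is the reduced Toeplitz algebra of \thmref{lcscUniquenessThm} (the paper never invokes a full--reduced comparison in this corollary), and your identification of $C^*_r\bigl((\{1\}\ltimes\Z/n\Z)\ltimes\Omega\bigr)$ with the crossed product is immediate since the acting group is finite.
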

\begin{proof}  It is easy to verify that $(\nxxzn)^*= \{1\} \rtimes\Z/n\Z$. 
    The result now follows from  \thmref{lcscUniquenessThm}, whose assumptions are satisfied by 
  \proref{pro:examplesproperties} and \proref{pro:categoryreltopp}. 
\end{proof}
Next we aim to show that the conditions for detecting ideals from \cite{brown2015cartan} and our generalization to that  from \cite{LS} are effectively different.  In order to do  this we first need to compute  the interior of the isotropy.

\begin{prop}\label{interiorOfIsoISActionProp}
	Let $S \curvearrowright \Omega$ be an action of an inverse semigroup on a topological space. The interior of the isotropy of the action groupoid $\msc{G} := S \ltimes \Omega$ consists of all pairs $[s,x] \in \msc{G}$ such that there exists an open neighborhood $U$ of $x$ for which $s\cdot y = y$ for all $y \in U$.
\end{prop}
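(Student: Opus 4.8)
The plan is to prove the two inclusions separately, working directly from the germ description of $\msc{G}$ and the basis $\{[s,U] : U \subseteq D_{s^{-1}s} \text{ open}\}$ for its topology. Throughout I will use that $[s,x] \in \msc{G}$ lies in the isotropy precisely when $\ssrc{[s,x]} = \ttrg{[s,x]}$, that is, when $s\cdot x = x$, and that two germs $[s,x]$ and $[t,x]$ coincide exactly when there is an idempotent $e \in S$ with $x \in D_e$ and $se = te$.

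For the inclusion that every such pair is interior, I would start from a point $[s,x]$ admitting an open neighborhood $U$ of $x$ on which $s$ acts as the identity. Since $x \in D_{s^{-1}s}$, after shrinking $U$ to $U \cap D_{s^{-1}s}$ I may assume $U$ lies in the domain of $s$, so that $[s,U]$ is a basic open neighborhood of $[s,x]$. Every germ $[s,y]$ with $y \in U$ then satisfies $s\cdot y = y$ and hence lies in the isotropy, giving $[s,U] \subseteq \operatorname{Iso}(\msc{G})$ and so $[s,x] \in \operatorname{Iso}(\msc{G})^\circ$. This direction is routine.

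The substantive direction is the converse. Given $[s,x] \in \operatorname{Iso}(\msc{G})^\circ$, I would choose a basic open set $[t,V]$ with $[s,x] \in [t,V] \subseteq \operatorname{Iso}(\msc{G})$. Comparing source maps shows that the representative realizing $[s,x] \in [t,V]$ must be based at $x$, so $x \in V$ and $[s,x] = [t,x]$ as germs. The point to watch is that this neighborhood is expressed through a possibly different semigroup element $t$, and the whole argument hinges on transferring the fixed-point information from $t$ back to $s$. I would do this using the germ equality: it supplies an idempotent $e$ with $x \in D_e$ and $se = te$, which means $s$ and $t$ agree as partial maps on the open set $D_e \cap D_{s^{-1}s} \cap D_{t^{-1}t}$. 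Setting $U := V \cap D_e \cap D_{s^{-1}s} \cap D_{t^{-1}t}$, an open neighborhood of $x$, each $y \in U$ satisfies $t\cdot y = y$ (because $[t,y] \in [t,V] \subseteq \operatorname{Iso}(\msc{G})$) and $s\cdot y = t\cdot y$, whence $s\cdot y = y$.

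The main obstacle, then, is essentially bookkeeping: the open neighborhood witnessing that $[s,x]$ is interior comes packaged with its own element $t$ rather than with $s$, and one must intersect the relevant domains so that ``$t$ fixes $y$'' and ``$s$ and $t$ have the same germ at $y$'' hold on a single common open set. Once the germ equivalence is invoked to produce the idempotent $e$ and the associated open set, no further difficulty arises.
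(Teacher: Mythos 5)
Your proof is correct and takes essentially the same route as the paper's, which simply observes that basic open neighbourhoods of $[s,x]$ in $S \ltimes \Omega$ may be taken of the form $[s,U]$ with $U \subseteq D_{s^{-1}s}$ open, and reads off the conclusion. The extra step you supply --- invoking the germ relation to produce an idempotent $e$ with $se = te$ and thereby replacing the element $t$ of an arbitrary basic neighbourhood $[t,V]$ by $s$ itself --- is precisely the bookkeeping the paper leaves implicit, and you carry it out correctly.
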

\begin{proof}
	A basis for the topology of $S \ltimes \Omega$ is given by the subsets $[s,U] = \{ [s,\chi] \; | \; \chi \in U\}$ for $s \in S$ and $U$ an open subset of $D_{s^{-1}s}$. It follows that a point $[s,x] \in S \ltimes \Omega$ is in the interior of the isotropy if and only if we can choose $U\subseteq D_{s^{-1}s}$ such that $s$ fixes every element of $U$.
\end{proof}

\begin{prop}\label{inverseSemigroupActionGroupoidIsoProp}
	Let $\mc{C}$ be a left cancellative small category. The interior of the isotropy of $\msc{G}:= \overline{I_l} \ltimes \Omega$ is the collection of all pairs $[s,\chi]$ such that there exists an idempotent $e \in E(\overline{\mc{I}_l})$ satisfying \begin{itemize}
	    \item $e \leq s^{-1}s$
     \item $\chi(e) = 1$
     \item for each $c \in \operatorname{dom}(e) \subseteq \mc{C}$ there exists $u \in \mc{C}^*$ such that  $[s,\chi_c] = [c uc^{-1},\chi_c]$.
     \end{itemize}
\end{prop}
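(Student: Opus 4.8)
The plan is to reduce everything to the general criterion for the interior of the isotropy of an inverse semigroup action groupoid established in \proref{interiorOfIsoISActionProp}, and then translate the resulting fixed-point condition into the stated algebraic condition using density of the principal characters. Since the idempotent semilattice $\overline{E}$ of $\overline{\mc{I}_l}$ is a (generalized) Boolean algebra, the clopen sets $D_e := \{\chi \in \Omega : \chi(e) = 1\}$, $e \in \overline{E}$, form a basis for the topology of $\Omega$, and $e \leq s^{-1}s$ guarantees $D_e \subseteq D_{s^{-1}s}$. By \proref{interiorOfIsoISActionProp}, $[s,\chi]$ lies in the interior of the isotropy iff $s$ fixes a neighborhood of $\chi$ in $D_{s^{-1}s}$; refining to a basic neighborhood and replacing its defining idempotent $e$ by $e \wedge s^{-1}s$, this is equivalent to the existence of $e \in \overline{E}$ with $e \leq s^{-1}s$, $\chi(e) = 1$, and $s \cdot \psi = \psi$ for every $\psi \in D_e$. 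So the whole proposition reduces to showing that, for a fixed such $e$,
\[
s \cdot \psi = \psi \text{ for all } \psi \in D_e \iff \text{for each } c \in \dom{e},\ [s,\chi_c] = [cuc^{-1},\chi_c] \text{ for some } u \in \mc{C}^*.
\]

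For this equivalence I would first record two elementary facts. Extending principal characters from $E$ to the Boolean closure via the formula defining $\overline{\chi}$ gives $\chi_c(e) = 1 \iff c \in \dom{e}$ for every $e \in \overline{E}$ (checked on the generators $e_i \setminus \bigvee_j f_{ij}$, where the Boolean operations reduce it to ordinary set membership). Consequently $\{\chi_c : c \in \dom{e}\} = \{\chi_c : c \in \mc{C}\} \cap D_e$, and since $D_e$ is open and the principal characters are dense in $\Omega$ by \lemref{semicharactersDenseLem}, this set is dense in $D_e$.

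The forward implication is then immediate: if $s$ fixes all of $D_e$, then for $c \in \dom{e}$ we have $\chi_c \in D_e$, so $s \cdot \chi_c = \chi_c$ and $[s,\chi_c]$ lies in the isotropy group $\msc{G}^{\chi_c}_{\chi_c}$; \lemref{lem:categoryIsotropy} then supplies $u \in \ssrc{c}\mc{C}^*\ssrc{c}$ with $[s,\chi_c] = [cuc^{-1},\chi_c]$. For the converse, the hypothesis together with \lemref{lem:categoryIsotropy} gives $s \cdot \chi_c = \chi_c$ for every $c \in \dom{e}$ (since each $[cuc^{-1},\chi_c]$ is a loop at $\chi_c$), i.e. the two continuous maps $\psi \mapsto s \cdot \psi$ and $\psi \mapsto \psi$ agree on the dense subset $\{\chi_c : c \in \dom{e}\}$ of $D_e$.

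The only genuine obstacle is the final density-to-everywhere step: one must promote agreement on the dense set of principal characters to agreement on all of $D_e$. This is exactly where Hausdorffness of the unit space $\Omega$ enters, as opposed to that of the possibly non-Hausdorff groupoid $\msc{G}$; since $\Omega$ is a space of characters, hence a subspace of $\{0,1\}^{\overline{E}}$ and therefore Hausdorff, two continuous maps into $\Omega$ that agree on a dense set agree everywhere, giving $s \cdot \psi = \psi$ for all $\psi \in D_e$. The remaining work is purely bookkeeping: reducing to a single basic clopen $D_e$ and verifying the characterization $\chi_c(e) = 1 \iff c \in \dom{e}$ on the Boolean closure.
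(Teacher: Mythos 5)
Your proof is correct and follows essentially the same route as the paper's: reduce to the fixed-point criterion of \proref{interiorOfIsoISActionProp} on a basic clopen set $\Omega(e)$ with $e\leq s^{-1}s$, use \lemref{lem:categoryIsotropy} to translate ``$s$ fixes $\chi_c$'' into the condition $[s,\chi_c]=[cuc^{-1},\chi_c]$, and pass from the dense set of principal characters in $\Omega(e)$ to all of $\Omega(e)$ by continuity. The only difference is that you spell out details the paper leaves implicit (the identification $\chi_c(e)=1\iff c\in\dom{e}$ on the Boolean closure, and Hausdorffness of $\Omega$ underlying the density argument), which is harmless.
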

\begin{proof}
	Let $[s,\chi] \in \operatorname{Iso}(\msc{G})^\circ$; then there is a basic open neighborhood $\Omega(e) = \{\psi \in \Omega \; | \; \psi(e) = 1 \}$ of $\chi$ such that $s\cdot \psi = \psi$ for all $\psi \in \Omega(e)$. In particular, if $c \in \dom{e}$, then $\chi_c \in \Omega(e)$ and  $s$ fixes $\chi_c$ so $[s,\chi_c] = [c uc^{-1},\chi_c]$ for some $u \in \mc{C}^*$.
	Conversely, if $[s,\chi] \in \msc{G}$ and there is an idempotent $e \leq s^{-1}s$ such that $\chi(e) = 1$ and $[s,\chi_c] \in [c \mc{C}^* c^{-1}, \chi_c]$, then $s$ fixes a dense subset of $\Omega(e)$ so $s$ fixes every $\psi \in \Omega(e)$ by continuity.
\end{proof}

Next we  show that the action groupoid of invertible elements acting on the unit space is a more efficient way to detect ideals of $\Toepr(\nxxzn)$ than the interior of the isotropy.

\begin{prop}\label{pro:P2} 
   $ (\nxxzn)^* \ltimes \Omega  \subsetneq \operatorname{Iso} (\msc{G})^\circ $.
\end{prop}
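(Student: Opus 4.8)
The plan is to prove the strict inclusion in two stages: first the containment $(\nxxzn)^* \ltimes \Omega \subseteq \operatorname{Iso}(\msc{G})^\circ$, and then strictness by exhibiting an explicit isotropy element lying in the interior but not in $(\nxxzn)^* \ltimes \Omega$. Throughout I write $\mc{C} = \nxxzn$, so that $\mc{C}^* = \{1\} \rtimes \Z/n\Z$, and I assume $n \geq 2$ (for $n=1$ the monoid is $\nx$, which is group-embeddable, and the content of this section does not apply).

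For the containment I would first observe that, since $\mc{C}^*$ is finite, the arrow set $\mc{C}^* \ltimes \Omega = \bigcup_{[m] \in \Z/n\Z} [(1,[m]), \Omega]$ is a finite union of basic open bisections and hence open in $\msc{G}$ by \defref{LCSCGroupoidDef}. Because $\operatorname{Iso}(\msc{G})^\circ$ is the largest open subset contained in $\operatorname{Iso}(\msc{G})$, it then suffices to check $\mc{C}^* \ltimes \Omega \subseteq \operatorname{Iso}(\msc{G})$, i.e. that each invertible $u = (1,[m])$ fixes every $\chi \in \Omega$. The key point is that left multiplication by an invertible does not change principal right ideals: by the $[k]$-independence of $(a,[k])\mc{C}$ in \proref{pro:examplesproperties} we get $(uc)\mc{C} = c\mc{C}$ and hence $\chi_{uc} = \chi_c$; combined with the equivariance $u\cdot\chi_c = \chi_{uc}$ of the dense embedding $c \mapsto \chi_c$ (which follows from \lemref{lem:preSecondEqualityLem}), this shows $u$ fixes every principal character. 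Density (\lemref{semicharactersDenseLem}) and continuity of the action then force $u\cdot\chi = \chi$ for all $\chi \in \Omega$.

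For strictness I would produce the explicit witness $g := [cuc^{-1}, \chi_c]$ with $c := (n,[0])$ and $u := (1,[1]) \in \mc{C}^*$, which lies in $\msc{G}^{\chi_c}_{\chi_c}$ by \lemref{lem:categoryIsotropy}. The computation I rely on is that, as a partial bijection, $cuc^{-1}$ acts on $c\mc{C} = \{(nb,[l]) : b \in \nx,\ [l] \in \Z/n\Z\}$ by $(nb,[l]) \mapsto (nb,[l+b])$: it preserves the first coordinate and shifts the second by $b = (\text{first coordinate})/n$. To see $g \in \operatorname{Iso}(\msc{G})^\circ$ I would apply \proref{interiorOfIsoISActionProp} with the open neighborhood $U = \Omega(cc^{-1})$ of $\chi_c$; since $cuc^{-1}$ preserves first coordinates it fixes every principal character $\chi_{c'}$ with $c' \in c\mc{C}$ (again by \proref{pro:examplesproperties} and equivariance), hence fixes all of $U$ by density and continuity.

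It remains to show $g \notin \mc{C}^* \ltimes \Omega$, and I expect this to be the main obstacle, since it requires careful germ bookkeeping rather than a single computation. Supposing $g = [(1,[q]), \chi_c]$, the equivalence relation of \defref{LCSCGroupoidDef} supplies an idempotent $e$ with $\chi_c(e) = 1$ and $(cuc^{-1})e = (1,[q])e$. Comparing the domains of these two partial bijections—the left one has domain $\dom{e} \cap c\mc{C}$, the right one has domain $\dom{e}$—forces $\dom{e} = c\mc{C}$, hence $e = cc^{-1}$, so the identity reduces to $cuc^{-1} = (1,[q])|_{c\mc{C}}$ on all of $c\mc{C}$. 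But $(1,[q])$ fixes $c\mc{C}$ pointwise, since its shift $q\cdot nb$ is $\equiv 0 \pmod n$, whereas $cuc^{-1}$ sends $(n,[0]) \mapsto (n,[1]) \neq (n,[0])$ for $n \geq 2$; this contradiction completes the argument. The essential subtlety is precisely that equality of germs forces the two partial maps to coincide on the entire ideal $c\mc{C}$, so that the single point of disagreement at $c$ suffices.
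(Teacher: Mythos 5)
Your overall strategy coincides with the paper's: the containment is proved by the same density-plus-openness argument, and your witness $[cuc^{-1},\chi_c]$ with $c=(n,[0])$ and $u=(1,[1])$ is literally the paper's element $s=(n,[0])(1,[1])(n,[0])^{-1}$ based at $\chi_{(n,[0])}$, handled the same way via \proref{interiorOfIsoISActionProp}. The one genuine problem is in the final non-membership step, and it is exactly the point you flag as the ``essential subtlety.'' From $se=(1,[q])e$ you correctly get $\dom{e}\subseteq c\mc{C}$, but the further claim that $\chi_c(e)=1$ forces $\dom{e}=c\mc{C}$ and hence $e=cc^{-1}$ is false: in \defref{LCSCGroupoidDef} the idempotent $e$ ranges over the Boolean closure $\overline{E}$, and for such $e$ the extended character satisfies $\overline{\chi_c}(e)=1$ without $c\mc{C}\subseteq\dom{e}$. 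For instance $e=cc^{-1}\setminus dd^{-1}$ with $d=(n^2,[0])$ has $\overline{\chi_c}(e)=1$ (since $\chi_c(dd^{-1})=0$) yet $\dom{e}=c\,\nxxzn\setminus d\,\nxxzn\subsetneq c\,\nxxzn$. So germ equality does \emph{not} force the two partial maps to coincide on all of $c\mc{C}$, only on $\dom{e}$.

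The argument is nevertheless repairable in one line, because your contradiction only uses the value at the single point $c=(n,[0])$, and $\overline{\chi_c}(e)=1$ does imply $c\in\dom{e}$: for $f\in E$ the domain is a right ideal, so $\chi_c(f)=1$ if and only if $c\in\dom{f}$, and this equivalence persists through the Boolean operations defining $\overline{\chi_c}$ on $\overline{E}$. Hence $se=(1,[q])e$ gives $s(c)=(n,[1])$ equal to $(1,[q])(c)=(n,[qn])=(n,[0])$, which is the desired contradiction for $n\geq 2$. This is essentially what the paper does: it evaluates $se=ue$ at arbitrary $(na,[m])\in\dom{e}$, deduces $n\mid a$, hence $\dom{e}\subseteq (n^2,[0])\,\nxxzn$, contradicting $(n,[0])\in\dom{e}$ --- thereby never needing $\dom{e}$ to be all of $c\mc{C}$. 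You should replace the ``forces $\dom{e}=c\mc{C}$'' step with the observation that $c\in\dom{e}$, and drop the closing sentence asserting coincidence on the entire ideal.
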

\begin{proof}
First we show that $(\nxxzn)^* \ltimes \Omega\subset \operatorname{Iso} (\msc{G})^\circ $. Suppose $(1,[k]) \in (\nxxzn)^*$ and let  $(a,[m])\in \nxxzn$.   Then $(1,[k])\cdot \chi_{(a,[m])} = \chi_{(a,[ka+m])} =\chi_{(a,[m])}$ because $(a,[ka+m]) =(a,[m])$ in $\nxxzn$. Since the characters  $\chi_{(a,[m])}$ are dense in $\Omega$ by \lemref{semicharactersDenseLem},  we see that $(\nxxzn)^* \ltimes \Omega$ is contained in the isotropy at every point. But $(\nxxzn)^* \ltimes \Omega$ is open, hence it is actually  contained in the interior of the isotropy.

 Next define an element $s$ of the left inverse hull by
\begin{align*} s:= (n,[0])(1,[1])(n,[0])^{-1}: (n,[0])(\nxxzn) &  \xlongrightarrow{\cong} (n,[0])(\nxxzn) \\
(n,[0])(a,[m]) &\longmapsto (n,[0])(a,[a+m]).
\end{align*}
We will show that 
\[ [s, \chi_{(n,[0])}] \in \operatorname{Iso}({(\nxxzn)\ltimes \Omega_2})^\circ \setminus (\nxxzn)^* \ltimes \Omega.
\]

 We show first that the element $[s, \chi_{(n,[0])}]$ is in the interior of the isotropy of $\msc{G}$. First note that points of the form $\chi_{(n,[0])(b,[m])} = \chi_{(nb,[m])}$, with $b \in \nx$ and $0 \leq m \leq n-1$, are dense in $V := \Omega((n,[0])(\nxxzn))$. For every such point we have $s \cdot \chi_{(nb,[m])} = \chi_{(nb,[m+b])} = \chi_{(nb,[m])}$ where the last equality is due to the fact that for each $e \in E(\overline{\mc{I}_l})$ we have $(nb,[m]) \in \dom{e} \iff (nb, [k]) \in \dom{e}$ for all $0 \leq k \leq m-1$. Thus $s$ fixes a dense set of points in $V$ and must fix every point in $V$ by continuity. Then $[s, \chi_{(n,[0])}] \in \operatorname{Iso}(\msc{G})^\circ$ by Proposition \ref{interiorOfIsoISActionProp}.
  
   To conclude that $[s, \chi_{(n,[0])}] \notin (\nxxzn)^*\ltimes \Omega$  we need to show that for all $e \leq s^{-1}s$ such that $(n,[0]) \in \dom{e}$ and $u = (1,[k]) \in (\nxxzn)^*$ we have $se \neq ue$. Suppose for contradiction that $e \in E$ and $u = (1,[k])$ are as above and that $se = ue$ and let $(na,[m]) \in  \operatorname{dom}(e) \subseteq (n,[0])(\nxxzn)$ be arbitrary. Then $se = ue$ implies that $(na,[m+a]) = s(na,[m]) = (1,[k])(na,[m]) = (na,[m])$ so $[m+a] = [m]$ and $n$ must divide $a$. But then $\dom{e} \subseteq (n^2,[0])(\nxxzn)$ which  contradicts   the assumption that $(n,[0]) \in \dom{e}$. 
\end{proof}

\begin{remark}
 {From the proof of \proref{pro:P2} we see that $(\nxxzn)^* \ltimes \Omega$ is an open subgroupoid of the interior of the isotropy. Moreover, at the canonical dense subset of characters $\chi_{(a,[m])}$ the isotropy of this subgroupoid coincides with that of $\msc{G}$. Thus, our  \corref{cor:critforsemid} already follows from Starling's improvement \cite[Theorem 2.1(ii)]{Starling} of the criteria from \cite{brown2015cartan}.}

    A similar result to \proref{pro:P2} can be proved about the monoid  $\nxxz$, which is group embeddable, and about $\nxxqz$, which is not.
\end{remark}
In contrast to the above applications we now consider the submonoid $\Z \rtimes \nx$ of the group $\Q \rtimes \Q_+^*$, where the operation is given by $(k,a)(l,b) = (k + al, ab)$ and the  set of invertible elements is $(\Z \rtimes \nx)^* = \Z \rtimes \{1\}$. This monoid is the opposite of $\nxxz$ and corresponds to the additive boundary quotient in \cite{LacaRaeburn}. It has a finer principal ideal structure, see 
    \cite[Section 5]{LacaRaeburn} and the isotropy at principal characters is consequently  smaller. As a result, we see that ideal detection is more efficient using the interior of the isotropy, which in this case is simply the unit space.
\begin{prop}
      Let $\Omega := \msc{G}^{(0)}$ be the unit space of the groupoid $\msc{G}$ associated to the monoid $\Z \rtimes \nx$. Then  $\operatorname{Iso}(\msc{G})^\circ \subsetneq (\Z \rtimes \nx)^* \ltimes \Omega$.
\begin{proof}
 The action of $(\Z \rtimes \nx)^*$ on $\Omega$ is topologically free by \cite[Lemma 1.6]{ToeLaNe}, and by \cite[Theorem 5.9]{LS} so is the partial action of $\Q_+^* \rtimes \Q$. Thus the interior of the isotropy is the unit space, which is properly contained in the action subgroupoid  $(\Z \rtimes \nx)^* \ltimes \Omega$.     For example, the element $((1,1),\chi_{(0,2)})  \in (\Z \rtimes \nx)^* \ltimes \Omega$ is not in $\operatorname{Iso}(\msc{G})$ because $(1,1) \cdot \nolinebreak \chi_{(0,2)} = \chi_{(1,2)}\neq\chi_{(0,2)} $.
    \end{proof}
   \end{prop}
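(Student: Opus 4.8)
The plan is to reduce the statement to a topological freeness assertion. Concretely, I would first show that $\operatorname{Iso}(\msc{G})^\circ = \Omega$, and then exhibit a single element of $(\Z \rtimes \nx)^* \ltimes \Omega$ that fails to lie in the isotropy, thereby witnessing that the inclusion is strict.

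First I would recall that $\Z \rtimes \nx$ embeds in the group $\Q \rtimes \Q_+^*$, so, as in the remark following \thmref{lcscUniquenessThm}, the groupoid $\msc{G}$ is the partial transformation groupoid of a partial action of $\Q \rtimes \Q_+^*$ on $\Omega$. For a partial action, \proref{interiorOfIsoISActionProp} says that $[s,x]$ lies in $\operatorname{Iso}(\msc{G})^\circ$ exactly when some group element fixes an entire open neighborhood of $x$; hence the equality $\operatorname{Iso}(\msc{G})^\circ = \Omega$ is equivalent to the partial action being topologically free, in the sense that the fixed-point set of each nontrivial group element has empty interior inside its domain.

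Next I would invoke the two cited freeness results. By \cite[Lemma~1.6]{ToeLaNe} the global action of the invertibles $(\Z \rtimes \nx)^* = \Z \rtimes \{1\}$ on $\Omega$ is topologically free, and by \cite[Theorem~5.9]{LS} the full partial action of the enveloping group $\Q \rtimes \Q_+^*$ is topologically free as well. Taken together these give $\operatorname{Iso}(\msc{G})^\circ = \Omega$, and since the unit space is always contained in the action subgroupoid, we obtain $\operatorname{Iso}(\msc{G})^\circ \subseteq (\Z \rtimes \nx)^* \ltimes \Omega$.

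Finally, for strictness I would carry out the explicit computation in the example. Using the operation $(k,a)(l,b) = (k+al,ab)$ one has $(1,1)\cdot \chi_{(0,2)} = \chi_{(1,2)}$, and $\chi_{(1,2)} \neq \chi_{(0,2)}$ because the principal right ideals generated by $(1,2)$ and $(0,2)$ differ (the former has odd first coordinate, the latter even). Thus $[(1,1),\chi_{(0,2)}]$ belongs to $(\Z \rtimes \nx)^* \ltimes \Omega$ but not even to $\operatorname{Iso}(\msc{G})$, let alone its interior, so the inclusion is proper. I expect the main obstacle to be the first reduction: verifying that the hypotheses of \cite[Theorem~5.9]{LS} and \cite[Lemma~1.6]{ToeLaNe} genuinely apply to this particular partial action, and that \emph{topologically free} in those references translates precisely into $\operatorname{Iso}(\msc{G})^\circ = \Omega$ in the second-countable Hausdorff \'etale setting.
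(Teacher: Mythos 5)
Your proposal is correct and follows essentially the same route as the paper: both deduce $\operatorname{Iso}(\msc{G})^\circ = \Omega$ from the topological freeness results in \cite[Lemma~1.6]{ToeLaNe} and \cite[Theorem~5.9]{LS}, and both exhibit the same element $[(1,1),\chi_{(0,2)}]$ to witness strictness. The extra care you take in translating ``topologically free'' into the statement about the interior of the isotropy via \proref{interiorOfIsoISActionProp} is implicit in the paper's argument and is a reasonable thing to spell out.
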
 
    
   \bigskip
   \section*{Disclosure and data availability statement} 
   The authors have no relevant financial or non-financial interests to disclose. 
   There is no data associated to this paper.

\begin{bibdiv}
  \begin{biblist}
  
  \bib{ArchboldSpielberg}{article}{
   author={Archbold, Robert J.},
   author={Spielberg, Jack S.},
   title={Topologically free actions and ideals in discrete $C^*$-dynamical
   systems},
   journal={Proc. Edinburgh Math. Soc. (2)},
   volume={37},
   date={1994},
   number={1},
   pages={119--124},
   issn={0013-0915},
   review={\MR{1258035}},
   doi={10.1017/S0013091500018733},
}

\bib{ArmstrongUniqueness}{article}{
   author={Armstrong, Becky},
   title={A uniqueness theorem for twisted groupoid C*-algebras},
   journal={J. Funct. Anal.},
   volume={283},
   date={2022},
   number={6},
   pages={Paper No. 109551, 33},
   issn={0022-1236},
   review={\MR{4433049}},
   doi={10.1016/j.jfa.2022.109551},
}
\bib{brown2015cartan}{article}{
   author={Brown, Jonathan H.},
   author={Nagy, Gabriel},
   author={Reznikoff, Sarah},
   author={Sims, Aidan},
   author={Williams, Dana P.},
   title={Cartan subalgebras in $C^*$-algebras of Hausdorff \'{e}tale groupoids},
   journal={Integral Equations Operator Theory},
   volume={85},
   date={2016},
   number={1},
   pages={109--126},
   issn={0378-620X},
   review={\MR{3503181}},
   doi={10.1007/s00020-016-2285-2},
}

\bib{Crisp-Laca}{article}{
author = {Crisp, John},
author={Laca, Marcelo},
year = {2007},
pages = {127-156},
title = {Boundary quotients and ideals of Toeplitz $C^*$-algebras of Artin groups},
volume = {242},
journal = {J. Funct. Anal.},
doi = {10.1016/j.jfa.2006.08.001}
}

\bib{christensenNonExoticCompletions2022}{article}{
   author={Christensen, Johannes},
   author={Neshveyev, Sergey},
   title={(Non)exotic completions of the group algebras of isotropy groups},
   journal={Int. Math. Res. Not. IMRN},
   date={2022},
   number={19},
   pages={15155--15186},
   issn={1073-7928},
   review={\MR{4490951}},
   doi={10.1093/imrn/rnab127},
}

 {\bib{Chr-Nes-Isotropy-2023}{article}{
   author={Christensen, Johannes},
   author={Neshveyev, Sergey},
   title={Isotropy fibers of ideals in groupoid $\rm C^*$-algebras},
   journal={Adv. Math.},
   volume={447},
   date={2024},
   pages={Paper No. 109696, 32},
   issn={0001-8708},
   review={\MR{4742724}},
   doi={10.1016/j.aim.2024.109696},
}}

\bib{cuntzKTheoryGroupAlgebras2017}{collection}{
   author={Cuntz, Joachim},
   author={Echterhoff, Siegfried},
   author={Li, Xin},
   author={Yu, Guoliang},
   title={$K$-theory for group $C^*$-algebras and semigroup $C^*$-algebras},
   series={Oberwolfach Seminars},
   volume={47},
   publisher={Birkh\"{a}user/Springer, Cham},
   date={2017},
   pages={ix+319},
   isbn={978-3-319-59914-4},
   isbn={978-3-319-59915-1},
   review={\MR{3618901}},
}

\bib{dixmierAlgebras1982}{book}{
   author={Dixmier, Jacques},
   title={Les $C^*$-alg\`ebres et leurs repr\'{e}sentations},
   language={French},
   series={Les Grands Classiques Gauthier-Villars. [Gauthier-Villars Great
   Classics]},
   note={Reprint of the second (1969) edition},
   publisher={\'{E}ditions Jacques Gabay, Paris},
   date={1996},
   pages={403},
   isbn={2-87647-013-6},
   review={\MR{1452364}},
}

\bib{ExelLacaQuigg}{article}{
   author={Exel, Ruy},
   author={Laca, Marcelo},
   author={Quigg, John},
   title={Partial dynamical systems and $C^*$-algebras generated by partial
   isometries},
   journal={J. Operator Theory},
   volume={47},
   date={2002},
   number={1},
   pages={169--186},
   issn={0379-4024},
   review={\MR{1905819}},
}
 {\bib{Exel2008}{article}{
   author={Exel, Ruy},
   title={Inverse semigroups and combinatorial $C^\ast$-algebras},
   journal={Bull. Braz. Math. Soc. (N.S.)},
   volume={39},
   date={2008},
   number={2},
   pages={191--313},
   issn={1678-7544},
   review={\MR{2419901}},
   doi={10.1007/s00574-008-0080-7},
}}
\bib{exelNonHausdorffEtaleGroupoids2011}{article}{
   author={Exel, Ruy},
   title={Non-Hausdorff \'{e}tale groupoids},
   journal={Proc. Amer. Math. Soc.},
   volume={139},
   date={2011},
   number={3},
   pages={897--907},
   issn={0002-9939},
   review={\MR{2745642}},
   doi={10.1090/S0002-9939-2010-10477-X},
}

\bib{GG-MSCthesis2023}{thesis}{
   author={Goerke, Gavin},
   title={A uniqueness theorem for C*-algebras of Hausdorff \'etale groupoids},
   school={MSc Thesis, University of Victoria},
   date={April 2023, {http://hdl.handle.net/1828/15010}},
   number={},
   pages={},
   doi={},
}

\bib{aHLR21}{article}{
   author={an Huef, Astrid},
   author={Laca, Marcelo},
   author={Raeburn, Iain},
   title={Boundary quotients of the right Toeplitz algebra of the affine
   semigroup over the natural numbers},
   journal={New Zealand J. Math.},
   volume={52},
   date={2021 [2021--2022]},
   pages={109--143},
   issn={1171-6096},
   review={\MR{4374439}},
   doi={10.53733/90},
}
\bib{KawamuraTomiyama}{article}{
   author={Kawamura, Shinz{{o}}},
   author={Tomiyama, Jun},
   title={Properties of topological dynamical systems and corresponding
   $C^*$-algebras},
   journal={Tokyo J. Math.},
   volume={13},
   date={1990},
   number={2},
   pages={251--257},
   issn={0387-3870},
   review={\MR{1088230}},
   doi={10.3836/tjm/1270132260},
}

\bib{kennedyIdealIntersectionProperty2021}{misc}{
  title = {The ideal intersection property for essential groupoid $C^*$-Algebras},
author = {Kennedy, Matthew},
author= {Kim, Se-Jin},
author={ Li, Xin},
author={Raum, Sven},
author={Ursu, Dan},
year= {2021},
eprint = {arXiv:2107.03980},
}

\bib{kennedyNoncommutativeBoundariesIdeal2019}{article}{
   author={Kennedy, Matthew},
   author={Schafhauser, Christopher},
   title={Noncommutative boundaries and the ideal structure of reduced
   crossed products},
   journal={Duke Math. J.},
   volume={168},
   date={2019},
   number={17},
   pages={3215--3260},
   issn={0012-7094},
   review={\MR{4030364}},
   doi={10.1215/00127094-2019-0032},
}

\bib{kwa-mey}{article}{
   author={Kwa\'{s}niewski, Bartosz K.},
   author={Meyer, Ralf},
   title={Essential crossed products for inverse semigroup actions:
   simplicity and pure infiniteness},
   journal={Doc. Math.},
   volume={26},
   date={2021},
   pages={271--335},
   issn={1431-0635},
   review={\MR{4246403}},
}
		
\bib{quasilat}{article}{
   author={Laca, Marcelo},
   author={Raeburn, Iain},
   title={Semigroup crossed products and the Toeplitz algebras of nonabelian
   groups},
   journal={J. Funct. Anal.},
   volume={139},
   date={1996},
   number={2},
   pages={415--440},
   issn={0022-1236},
   review={\MR{1402771}},
   doi={10.1006/jfan.1996.0091},
}

\bib{LacaRaeburn}{article}{,
   title={Phase transition on the Toeplitz algebra of the affine semigroup over the natural numbers},
   volume={225},
   ISSN={0001-8708},
   url={http://dx.doi.org/10.1016/J.AIM.2010.03.007},
   DOI={10.1016/j.aim.2010.03.007},
   number={2},
   journal={Adv. Math},
   publisher={Elsevier BV},
   author={Laca, Marcelo},
   author={Raeburn, Iain},
   year={2010},
   month={Oct},
   pages={643–688} }
   
\bib{ToeLaNe}{article}{
   author={Laca, Marcelo},
   author={Neshveyev, Sergey},
   title={Type $\rm III_1$ equilibrium states of the Toeplitz algebra of the
   affine semigroup over the natural numbers},
   journal={J. Funct. Anal.},
   volume={261},
   date={2011},
   number={1},
   pages={169--187},
   issn={0022-1236},
   review={\MR{2785897}},
   doi={10.1016/j.jfa.2011.03.009},
}
	
\bib{LS}{article}{
   author={Laca, Marcelo},
   author={Sehnem, Camila},
   title={Toeplitz algebras of semigroups},
   journal={Trans. Amer. Math. Soc.},
   volume={375},
   date={2022},
   number={10},
   pages={7443--7507},
   issn={0002-9947},
   review={\MR{4491431}},
   doi={10.1090/tran/8743},
}

\bib{Laca-Schulz23}{article}{
   author={Laca, Marcelo},
   author={Schulz, Tyler},
   title={Supercritical phase transition on the right Toeplitz algebra of the affine
   semigroup over the natural numbers},
   journal={preprint},
   date={2024},
}
\bib{lawsonInverseSemigroups1998}{book}{
   author={Lawson, Mark V.},
   title={Inverse semigroups: The theory of partial symmetries},
   publisher={World Scientific Publishing Co., Inc., River Edge, NJ},
   date={1998},
   pages={xiv+411},
   isbn={981-02-3316-7},
   review={\MR{1694900}},
   doi={10.1142/9789812816689},
}

\bib{LiGarsideCats}{article}{
   author={Li, Xin},
   title={Left regular representations of Garside categories I. $C^*$-algebras
   and groupoids},
   journal={Glasg. Math. J.},
   volume={65},
   date={2023},
   number={S1},
   pages={S53--S86},
   issn={0017-0895},
   review={\MR{4594275}},
   doi={10.1017/S0017089522000106},
}

\bib{nica}{article}{
   author={Nica, A.},
   title={$C^*$-algebras generated by isometries and Wiener-Hopf operators},
   journal={J. Operator Theory},
   volume={27},
   date={1992},
   number={1},
   pages={17--52},
   issn={0379-4024},
   review={\MR{1241114}},
}
 {\bib{PO2020}
{article}{
   author={Ortega, Eduard},
   author={Pardo, Enrique},
   title={The tight groupoid of the inverse semigroups of left cancellative
   small categories},
   journal={Trans. Amer. Math. Soc.},
   volume={373},
   date={2020},
   number={7},
   pages={5199--5234},
   issn={0002-9947},
   review={\MR{4127875}},
   doi={10.1090/tran/8100},
}
\bib{Paterson1999}{book}{
   author={Paterson, Alan L. T.},
   title={Groupoids, inverse semigroups, and their operator algebras},
   series={Progress in Mathematics},
   volume={170},
   publisher={Birkh\"{a}user Boston, Inc., Boston, MA},
   date={1999},
   pages={xvi+274},
   isbn={0-8176-4051-7},
   review={\MR{1724106}},
   doi={10.1007/978-1-4612-1774-9},
}}
\bib{simsEtaleGroupoidsTheir2018}{misc}{
  title = {\'{E}tale groupoids and their $C^*$-algebras},
  author = {Sims, Aidan},
  eprint = {arXiv:1710.10897},
}

\bib{SpielbergPathCats}{article}{
   author={Spielberg, Jack S.},
   title={Groupoids and $C^*$-algebras for categories of paths},
   journal={Trans. Amer. Math. Soc.},
   volume={366},
   date={2014},
   number={11},
   pages={5771--5819},
   issn={0002-9947},
   review={\MR{3256184}},
   doi={10.1090/S0002-9947-2014-06008-X},
}

\bib{SpielbergSmallCats}{article}{
   author={Spielberg, Jack S.},
   title={Groupoids and $C^*$-algebras for left cancellative small
   categories},
   journal={Indiana Univ. Math. J.},
   volume={69},
   date={2020},
   number={5},
   pages={1579--1626},
   issn={0022-2518},
   review={\MR{4151331}},
   doi={10.1512/iumj.2020.69.7969},
}
\bib{Starling}{article}{
   author={Starling, Charles},
   title={A new uniqueness theorem for the tight $\rm C^*$-algebra of an
   inverse semigroup},
   journal={C. R. Math. Acad. Sci. Soc. R. Can.},
   volume={44},
   date={2022},
   number={4},
   pages={88--112},
   issn={0706-1994},
   review={\MR{4537773}},
}

\end{biblist}
\end{bibdiv}
\end{document}